\xpatchcmd{\proof}{\itshape}{\prooflabelfont}{}{}
\newcommand{\prooflabelfont}{\bfseries}
\theoremstyle{definition}
\newtheorem{definition}{Definition}
\newtheorem{remark}{Remark}
\theoremstyle{plain}
\newtheorem{theorem}{Theorem}
\newtheorem{lemma}{Lemma}
\newtheorem{proposition}{Proposition}
\newcommand{\var}{\texttt}
\newcommand{\func}{\textsc}
\journal{Applied and Computational Harmonic Analysis}
\begin{document}

\begin{frontmatter}



\title{High-Order Synchrosqueezed Chirplet Transforms for Multicomponent Signal Analysis}


\author[inst1,inst2]{Yi-Ju Yen}

\affiliation[inst1]{organization={Graduate Institute of Communication Engineering, National Taiwan University},
            addressline={No. 1, Sec. 4, Roosevelt Rd.}, 
            city={Taipei},
            postcode={10617}, 
            country={Taiwan (R.O.C.)}}

\author[inst1]{De-Yan Lu}
\affiliation[inst2]{organization={Graduate Institute of Mathematics, National Taiwan University},
            addressline={No. 1, Sec. 4, Roosevelt Rd.}, 
            city={Taipei},
            postcode={10617}, 
            country={Taiwan (R.O.C.)}}

\author[inst3]{Sing-Yuan Yeh}

\affiliation[inst3]{organization={Data Science Degree Program, National Taiwan University and Academia Sinica},
            addressline={No. 1, Sec. 4, Roosevelt Rd.}, 
            city={Taipei},
            postcode={10617}, 
            country={Taiwan (R.O.C.)}}
\author[inst1]{Jian-Jiun Ding\corref{cor1}}
\cortext[cor1]{Corresponding author.}
\author[inst2]{Chun-Yen Shen}
\begin{abstract}
This study focuses on the analysis of signals containing multiple components with crossover instantaneous frequencies (IF). This problem was initially solved with the chirplet transform (CT). Also, it can be sharpened by adding the synchrosqueezing step, which is called the synchrosqueezed chirplet transform (SCT). However, we found that the SCT goes wrong with the high chirp modulation signal due to the wrong estimation of the IF. In this paper, we present the improvement of the post-transformation of the CT. The main goal of this paper is to amend the estimation introduced in the SCT and carry out the high-order synchrosqueezed chirplet transform. The proposed method reduces the wrong estimation when facing a stronger variety of chirp-modulated multi-component signals. The theoretical analysis of the new reassignment ingredient is provided. Numerical experiments on some synthetic signals are presented to verify the effectiveness of the proposed high-order SCT.
\end{abstract}



\begin{keyword}
Time-frequency analysis \sep Synchrosqueezing transform \sep Chirplet transform \sep Multi-component signals \sep Crossover instantaneous frequency
\MSC 65T99 \sep 42C99 \sep 42a38
\end{keyword}

\end{frontmatter}


\section{Introduction}
\label{sec:introduction}
Time series analysis is getting increasingly important in many fields, including radar system analysis, seismic wave detection, medical signals, etc. Due to the non-stationarity of the systems, the observed signals are always oscillatory and in the form of multiple components. 
Many time-frequency (TF) analysis tools have been invented to decompose and clarify them. The reassignment method(RM) manages to sharpen the representation. Empirical mode decomposition (EMD), the short-time Fourier transform (STFT), the STFT-based synchrosqueezing transform (FSST), or its higher order version, like the vertical second-order synchrosqueezing(VSST) and the $n$-th order STFT-based SST(FSSTn) was developed to retrieve the signals component by component. However, these methods require some specific separation conditions. To decompose a signal properly is an important issue before we start to analyze them.
However, the reassignment method(RM)\cite{auger1995improving, fitz2009unified} has the problem of invertibility. One cannot retrieve the mode information after performing the RM. EMD\cite{rilling2003empirical, 7080206} and the short time Fourier transform(STFT)\cite{cohen1995time} for multi-component signal requires the ``separation condition", that is to say,
if a signal $f(t)$ has the form as
\begin{equation}
    f(t) = \sum_{k=1}^{K}f_k(t)= \sum_{k=1}^{K}A_k(t)e^{2\pi i \phi_k(t)}
\end{equation}
where $A_k(t)$ is the non-negative function which is known as the amplitude function and $\phi_k(t)$ is the phase function with $\phi'_k(t)>0$, then STFT requires that, for any $k\neq j$, one should have $\left|\phi'_k(t)-\phi'_j(t)\right|\geq\Delta$ for some constant $\Delta$ depending on the resolution\cite{9970392} and for all $t$. This condition is understandable from the spectrogram graph since $\Delta$ stands for the gap between the bright bands\cite{cohen1995time, nelson2002instantaneous}. The FSST\cite{meignen2012new} or its higher order version(VSST, FSSTn)\cite{oberlin2015second, pham2017high} can improve the resolution of the time-frequency representation. However, they still require the separation condition to make the approximate reconstruction formula feasible \cite{behera2018theoretical}. In most conditions, the signal does not satisfy the separation condition and its time-frequency (TF) analysis will be very challenging. Due to the overlapping instantaneous frequency (IF) problem, we consider the $3$ dimensional time-frequency-chirp (TFC) representation when the modes have different chirp rates at the crossing points of the IF. With this technique, two crossing IFs in the TFC space are curved vertically and no longer overlap in the space \cite{mann1992time, mann1995chirplet, zhu2020frequency}. For other related and interesting methodologies and results, one can look at the paper \cite{boashash1994polynomial, wu2020current, li2020if, oberlin2017second}, and the references contained therein.

In this paper, we try to extend the method in \cite{chen2023disentangling} to the signals containing strong chirp modulation. The synchrosqueezed chirplet transform(SCT) proposed by Chen and Wu weakens the constraint of signal decomposition. In \cite{chen2023disentangling}, the target signal should satisfy $\left|\phi_{k}^{\prime}(t)-\phi_{l}^{\prime}(t)\right|+\left|\phi_{k}^{\prime \prime}(t)-\phi_{l}^{\prime \prime}(t)\right| \geq 2 \Delta$ for all $k\neq l$ and each term $f_k(t) = A_k(t)e^{2\pi i \phi_k(t)}$ is an $\epsilon$--intrinsic chirp type ($\epsilon$--ICT) function, i.e., it is ``similar" to a chirp function for all $k$. In other words, they assume that $\phi'''_k(t)$ are negligible, but, in many cases, the signals contain higher-order chirp modulation, which makes the reassignment ingredients used in \cite{chen2023disentangling} biased. This work proposes to give the third-order estimation of the phase function to adjust the estimator we use in the synchrosqueezing transform and improve the performance of the existing SCT with the new ingredients.

The outline of the paper is as follows: in Section \ref{sec:pre}, we recall some definitions and notations commonly used in TF analysis. We then present our proposed method and some properties in Section \ref{sec:main}. In Section \ref{sec:results}, we provide some numerical analysis of the synthetic data with our new method and compare the results with the SCT.

\section{Preliminary: Background of the SST and the SCT.}
\label{sec:pre}
In the beginning, we intuitively define  the AM-FM multi-component signal(MCS) as the superposition of the AM-FM signal as follows, which will be intensively studied later, 
\begin{equation}
f(t) = \sum _{k = 1}^{K}f_k(t)= \sum _{k = 1}^{K}A_k(t)e^{2\pi i \phi_k(t)}
\end{equation}
where $K\in \mathbb{N}$ is a known number from background knowledge. Each $f_k(t)$ is called a mode. Such a signal is completely described by the ideal TF representation.
\begin{definition}[Ideal Time-Frequency (TF) Representation]
Fixed an AM-FM multi-component signal $f(t) = \sum _{k = 1}^{K}f_k(t)= \sum _{k = 1}^{K}A_k(t)e^{2\pi i \phi_k(t)}$ , the ideal TF representation of $f(t)$ is given by
\begin{equation}
\mathrm{TI}_f(t,\omega) = \sum _{k = 1}^{K}A_k(t)\delta(\omega-\phi'_k(t))
\end{equation}
where $\delta$ denotes the Dirac distribution. 
\end{definition}
\begin{definition}[Short time Fourier transform (STFT)]
Given a signal $f \in L^2(\mathbb{R})$ and a window function $g\in \mathcal{S}(\mathbb{R})$, where the $\mathcal{S}(\mathbb{R})$ is the Schwartz function on $\mathbb{R}$ and the modified STFT of $f$ is defined by 
\begin{equation}
V^{(g)}_f(t, \xi) = \int_{\mathbb{R}}f(x)g^*(x-t)e^{-2\pi i \xi (x-t)}dx
\end{equation}
where $g^*$ is the complex conjugate of $g$ and $|V^{(g)}_f(t, \xi)|^2$ is the spectrogram.
\end{definition}
The key idea of synchrosqueezing is to sharpen the blurred TF distribution by using some instantaneous frequency estimator to reassign the value at time $t$ and frequency $\xi$ vertically.
The STFT-Based SST(FSST) \cite{behera2018theoretical} is defined by 
\begin{equation}
T^{g,\gamma}_{f}(t, \omega) = \frac{1}{g^*(0)}\int_{\{\xi:
| V^{(g)}_f(t,\xi) | > \gamma\}} V^{(g)}_f(t,\xi)\delta(\omega- \hat{\omega}_f(t, \xi))d\xi
\end{equation}
where 
$\tilde{\omega}_f(t,\xi) = \frac{1}{2\pi i}\frac{\partial_tV^{(g)}_f(t,\xi)}{V^{(g)}_f(t,\xi)}$ and $\hat{\omega}_f(t, \xi):= Re\left(\tilde{\omega}_f(t,\xi)\right) $. $\tilde{\omega}_f(t,\xi)$ is often called the first order modulation-based ridge detector which is used to estimate the first derivative of phase function. 
\begin{definition}[The second-order synchrosqueezing transform (VSST)]
Set $f\in L^2(\mathbb{R})$, define 
    $$\tilde{t}_f(t, \xi)=t-\frac{\partial_\eta V_f^g(t, \xi)}{i 2 \pi V_f^g(t, \xi)}\text{. }$$
When $V_f^g(t, \xi) \neq 0$ and $\partial_t\left(\frac{\partial_\eta V_f^g(t, \xi)}{V_f^g(t, \xi)}\right) \neq i 2 \pi$ ,we also define 
    $$\tilde{q}_f(t, \xi)=\frac{\partial_t \tilde{\omega}_f(t, \xi)}{\partial_t \tilde{t}_f(t, \xi)}=\frac{\partial_t\left(\frac{\partial_t V_f^g(t, \xi)}{V_f^g(t, \xi)}\right)}{i 2 \pi-\partial_t\left(\frac{\partial_\eta V_f^g(t, \xi)}{V_f^g(t, \xi)}\right)}$$
The second-order synchrosqueezing transform (VSST) \cite{oberlin2015second} is further defined by 
$$
T^{g,\gamma}_{f}(t, \omega) = \frac{1}{g^*(0)}\int_{\{\xi:
| V^{(g)}_f(t,\xi) | > \gamma\}} V^{(g)}_f(t,\xi)\delta(\omega- \hat{\omega}^{(2)}_f(t, \xi))d\xi
$$ where
$$
\begin{aligned}
    \tilde{\omega}_f^{(2)}(t, \xi)=\left\{\begin{array}{cc}
\tilde{\omega}_f(t, \xi)+\tilde{q}_f(t, \xi)\left(t-\tilde{t}_f(t, \xi)\right) & \text { if } \partial_t \tilde{t}_f(t, \xi) \neq 0 \\
\tilde{\omega}_f(t, \xi) & \text { otherwise }
\end{array}\right.
\end{aligned}
$$
and $\hat{\omega}^{[2]}_f(t, \xi) = Re\left(\tilde{\omega}_f^{(2)}(t, \xi)\right)$.
\end{definition}
It was shown in \cite{oberlin2015second} that $Re\left\{\tilde{q}_{t, f}(t, \eta)\right\}=\phi^{\prime \prime}(t)$ when $f(t) = A(t) e^{i 2 \pi \phi(t)}$ where both $\log (A(t))$ and $\phi(t)$ are quadratic. Also, $Re\left\{\tilde{\omega}_{t, f}^{(2)}(t, \eta)\right\}$ is an exact estimation of $\phi^{\prime}(t)$ for this kind of signals. With the thinking of the Taylor expansion, one can always pursue the higher-order estimation, but the $2$-dimensional version is not enough for the overlapping instantaneous frequencies. 

Here, we start to add the information in the chirp axis. we introduce a technique to extract more information, not only in the TF plane but also in the time-frequency-chirp (TFC) space. Like the $2$-dimensional case, the ideal TFC representation in the $ 3$-dimensional space can also completely describe a signal. Fixed an AM-FM multi-component signal $f(t) = \sum _{k = 1}^{K}f_k(t)= \sum _{k = 1}^{K}A_k(t)e^{2\pi i \phi_k(t)}$, the ideal TFC representation in the TFC space of $f(t)$ is given by
$$ \mathrm{TI}_f(t,\xi, \lambda) = \sum _{k = 1}^{K}A_k(t)\delta(\xi-\phi'_k(t))\delta(\lambda-\phi''_k(t))$$
where $\delta$ denotes the Dirac distribution. 
\begin{definition}[Chirplet transforms (CT)  \cite{mann1992time, mann1995chirplet}]\label{CT}
The chirplet transform of $f\in L^2(\mathbb{R})$ given a window function $g \in \mathcal{S}(\mathbb{R})$ at $(t, \xi, \lambda)$ is defined as
$$
T_f^{(g)}(t,\xi,\lambda)= \int_{\mathbb{R}}f(x)g^{*}(x-t)e^{-i 2\pi \xi (x-t)- i \pi \lambda(x-t)^2}dx
$$
where 
$t\in\mathbb{R}$ is time, $\xi \in\mathbb{R}$ means frequency, and $\lambda\in\mathbb{R}$ indicates the chirp rate. Since $g$ is Schwartz, $T_{f}^{(g)}(t, \xi, \lambda)$ is well defined at each $(t, \xi, \lambda)$, and it is a complex function on the TFC domain. 
\end{definition}
We adopted Lemma 1 in \cite{chen2023disentangling} which set $f(x) = e^{i 2\pi \xi_0 + i \pi \lambda_0 x^2}$. In this case, the regularity of $T_f^{(g)}(t,\xi,\lambda)$ is about $\frac{1}{\sqrt{|\lambda_0 - \lambda|}}$ as $\xi $ is sufficient close to $\xi_0+\lambda_0 t$. Since the regularity is poor and decays slowly with $\lambda$, it is reasonable to apply the synchrosqueezing transform to make the energy of $T^{(g)}_f(t, \xi, \lambda)$ more concentrated \cite{daubechies2011synchrosqueezed}.
\begin{definition}[Synchrosqueezed Chirplet Transform (SCT) \cite{chen2023disentangling}]\label{SCT}
Take $f \in L^2(\mathbb{R})$ as the input and $g \in S(\mathbb{R})$
as the window. For a small $\alpha > 0$, define the synchrosqueezed chirplet transform
(SCT) with resolution $\alpha$ as
    \begin{align*}
    &S_{f}^{(g, \alpha)}(t, \xi, \lambda):=
    \iint_{\mathbb{R}^{2}} T_{f}^{(g)}(t, \eta, \gamma) h_{\alpha}(\xi-\omega_{f}^{(g)}(t, \eta, \gamma)) h_{\alpha}(\lambda-\mu_{f}^{(g)}(t, \eta, \gamma)) \mathrm{d} \eta \mathrm{d} \gamma
    \end{align*}
where $\mu_{f}^{(g)}(t, \xi, \lambda)$ and $\omega_{f}^{(g)}(t, \xi, \lambda)$ are defined as 
$$
\mu_{f}^{(g)}(t, \xi, \lambda) =\frac{\partial_t \left(\frac{\partial_t T^{(g)}_f }{T^{(g)}_f }\right)}{2\pi i\left(1+\partial_t\left(\frac{T^{(tg)}_f}{T^{(g)}_f}\right)\right)}, $$
$$\omega_{f}^{(g)}(t, \xi, \lambda) = \frac{1}{2\pi i}\left(\frac{\partial_t T^{(g)}_f }{T^{(g)}_f }\right)-\mu_f^{(g)}(t, \xi, \lambda)\left(\frac{T^{(tg)}_f}{T^{(g)}_f}\right)
$$
and $h_{\alpha}$ is an “approximate $\delta$-function" 
(i.e. $h(x)$ is smooth, 
$\int_{\mathbb{R}} h(x) dx=1$ and $h_{\alpha}(x):=\frac{1}{\alpha}h(\frac{x}{\alpha})$).
\end{definition}

\clearpage

\section{Proposed Method}
\label{sec:main}
Note that the SCT in Definition \ref{SCT} can estimate the $2^{nd}$ order derivative of the phase function. Now, we try to perform higher-order estimation. We intend to find the estimation of the first three derivatives of the phase function $\phi(t)$. Assume that the signal $f(t)$ is of the polynomial phase function form, i.e.,
\begin{align*}
    f(x) = \exp{(2\pi i (\frac{\theta_0}{3!} x^3 + \frac{\mu_0}{2!} x^2 + \omega_0 x))},
\end{align*}
where 
\begin{equation}\label{eq:taylor_of_phi(t)}
\begin{aligned}
    \phi(x) &= \frac{\theta_0}{3!} x^3 + \frac{\mu_0}{2!} x^2 + \omega_0 x\\
            &=\phi(t) + \frac{\phi'(t)}{1!}(x-t)+\frac{\phi''(t)}{2!}(x-t)^2 +\frac{\phi'''(t)}{3!}(x-t)^3.
\end{aligned}
\end{equation}

It is worth to notice that $\partial_t T_f^{(g)} = T_{f'}^{(g)}$ and $f'(t) = 2\pi i \phi'(t)f(t)$. Therefore, from \eqref{eq:taylor_of_phi(t)}, we have 
\begin{equation}
\begin{aligned}
    f'(x) &= 2\pi i \phi'(x)f(x) \\
          &= 2\pi i \left[\phi'(t) + \frac{\phi''(t)}{1!}(x-t)+\frac{\phi'''(t)}{2!}(x-t)^2\right]f(x),
\end{aligned}
\end{equation}
which lead to 
\begin{equation}
\begin{aligned}
    \partial_t T_f^{(g)} &= i 2\pi \left[\phi'(t) T_f^{(g)} + \frac{\phi''(t)}{1!} T_f^{(tg)}+\frac{\phi'''(t)}{2!} T_f^{(t^2g)}\right]\\
    &=i 2 \pi \left([\omega_0+\mu_0 t+\frac{\theta_0}{2 !} t^2] T_f^{(g)}+\left[\mu_0+\theta_0 t\right] T_f^{(t g)}+ \frac{1}{2}\theta_0 T_f^{\left(t^2 g\right)}\right)\\
    \partial_t\left(\frac{\partial_t T_f^{(g)}}{T_f^{(g)}}\right)  &= \partial_t\left(2\pi i[\omega_0+ \mu_0 t+ \frac{\theta_0}{2!}t^2]+ i 2\pi \left[\mu_0+\theta_0 t\right]\left(\frac{T_f^{(tg)}}{T_f^{(g)}}\right)\right.\\
    &\hspace{1.3cm}\left.+\pi i\theta_0 \left(\frac{T_f^{(t^2g)}}{T_f^{(g)}}\right)\right)\\
    &= i 2\pi \left[\mu_0 + \theta_0t\right]+ i 2\pi \theta_0 \frac{T_f^{(tg)}}{T_f^{(g)}}+2\pi i\left[\mu_0+\theta_0 t\right]\partial_t\left(\frac{T_f^{(tg)}}{T_f^{(g)}}\right)\\
    &\quad+\pi i\theta_0 \partial_t\left(\frac{T_f^{(t^2g)}}{T_f^{(g)}}\right),\\
    \frac{\partial_t \left(\frac{\partial_t T^{(g)}_f }{T^{(g)}_f }\right) }{1+\partial_t\left(\frac{T^{(tg)}_f}{T^{(g)}_f}\right)} &= i 2\pi \left[\mu_0 + \theta_0t\right]+ \frac{i \pi \theta_0}{1+\partial_t\left(\frac{T^{(tg)}_f}{T^{(g)}_f}\right)}\left[ 2\frac{T^{(tg)}_f}{T^{(g)}_f}+\partial_t\left(\frac{T^{(t^2g)}_f}{T^{(g)}_f}\right)\right].
\end{aligned}
\end{equation}
Take the derivative one more time and move the extra item to the same side, we can get $\theta 
_0$ as follows: 
\begin{align*}
    \theta_0 = \phi'''(t)=\frac{1}{\pi i}\partial_t\left(   \frac{\partial_t \left(\frac{\partial_t T^{(g)}_f }{T^{(g)}_f }\right) }{1+\partial_t\left(\frac{T^{(tg)}_f}{T^{(g)}_f}\right)}  \right)\left[2+\partial_t\left(\frac{2\frac{T^{(tg)}_f}{T^{(g)}_f}+\partial_t\left(\frac{T^{(t^2g)}_f}{T^{(g)}_f}\right)}{1+\partial_t\left(\frac{T^{(tg)}_f}{T^{(g)}_f}\right)}\right)\right]^{-1}.
\end{align*}
The rest terms are
\begin{align*}
    \mu_0+\theta_0 t= \phi''(t) =\frac{\partial_t \left(\frac{\partial_t T^{(g)}_f }{T^{(g)}_f }\right)-\pi i \theta_0\left[2\frac{T^{(tg)}_f}{T^{(g)}_f}+\partial_t\left(\frac{T^{(t^2g)}_f}{T^{(g)}_f}\right)\right]}{2\pi i\left(1+\partial_t\left(\frac{T^{(tg)}_f}{T^{(g)}_f}\right)\right)}
\end{align*}
and
\begin{align*}
    \omega_0 + \mu_0 t + \frac{\theta_0}{2} t^2 = \phi'(t)=&\frac{1}{2\pi i}\left(\frac{\partial_t T^{(g)}_f }{T^{(g)}_f }\right)-(\mu_0+\theta_0 t)\left(\frac{T^{(tg)}_f}{T^{(g)}_f}\right)\\
    &\quad-\frac{1}{2}\theta_0\left(\frac{T^{(t^2g)}_f}{T^{(g)}_f}\right).
\end{align*}
To simplify the symbols, we observe that $\phi(x)$ is merely an analytic function and can be expressed as $\phi(x) = \sum_{n=0}^{\infty} \phi^{(n)}(t)\frac{(x-t)^n}{n!}$. Therefore, after some calculation, 
\begin{align}
    \partial_t \left(\frac{\partial_t T^g_f}{T^g_f}\right) = \sum_{j=0}^{\infty}2\pi i \phi^{(j+2)}(t)\frac{1}{(j+1)!}\left[(j+1)\frac{T^{t^jg}_f}{T^{g}_f}+\partial_t\left(\frac{T^{t^{j+1}g}_f}{T^{g}_f}\right)\right].
\end{align}
\begin{definition}[The $j^{th}$ q-operator]\label{q-th_operator}
    Given a representation $T_f^{(g)}$ with the window function $g(x)\in \mathcal{S}(\mathbb{R})$ and the target signal $f\in L^2(\mathbb{R})$, define the $j^{th}$ q-operator by 
    $$
    q^{g}_{f,j}(t,\xi,\lambda)= q^{j}_f(t, \xi, \lambda):= (j+1)\frac{T^{t^jg}_f}{T^{g}_f}+\partial_t\left(\frac{T^{t^{j+1}g}_f}{T^{g}_f}\right).
    $$
\end{definition}
With Definition \ref{q-th_operator}, for $j = 0,1$, one can rewrite the reassignment ingredients as
$$
\theta_0=\dfrac{\dfrac{1}{\pi i}\partial_t\left(\dfrac{\partial_t\left(\frac{\partial_t T^g_f}{T^g_f}\right)}{q^0_f}\right)}{2+\partial_t\left(\dfrac{q^1_f}{q^0_f}\right)}, $$
$$
\mu_0+\theta_0 t=\dfrac{1}{2\pi i }\frac{\left(2q^0_f+\partial_t q^1_f\right)\partial_t \left(\dfrac{\partial_t T^g_f}{T^g_f}\right)-\partial^2_{tt}\left(\dfrac{\partial_t T^g_f}{T^g_f}\right)q^1_f}{2\left(q^0_f\right)^2+\partial_t q^1_f q^0_f-q^1_f\partial_t q^0_f}.
$$
Take $f \in L^2(\mathbb{R})$ and $g \in S(\mathbb{R})$ as the window. For a representation $T_f^{(g)}$, define
\begin{equation}
    \theta_f^{(g)}(t, \xi,\lambda)=\dfrac{\dfrac{1}{\pi i}\partial_t\left(\dfrac{\partial_t\left(\frac{\partial_t T^g_f}{T^g_f}\right)}{q^0_f}\right)}{2+\partial_t\left(\dfrac{q^1_f}{q^0_f}\right)},
\end{equation}
\begin{equation}
\label{eqn:new_mu}
    \begin{aligned}
    \tilde{\mu}^{(g)}_f(t, \xi, \lambda):=\dfrac{1}{2\pi i }\frac{\left(2q^0_f+\partial_t q^1_f\right)\partial_t \left(\dfrac{\partial_t T^g_f}{T^g_f}\right)-\partial^2_{tt}\left(\dfrac{\partial_t T^g_f}{T^g_f}\right)q^1_f}{2\left(q^0_f\right)^2+\partial_t q^1_f q^0_f-q^1_f\partial_t q^0_f},
\end{aligned}
\end{equation}
\begin{equation}
\label{eqn:new_omega}
\begin{aligned}
    \tilde{\omega}^{(g)}_f(t, \xi, \lambda):&=\frac{1}{2\pi i}\left(\frac{\partial_t T^{(g)}_f }{T^{(g)}_f }\right)-\tilde{\mu}^{(g)}_f(t, \xi, \lambda)\left(\frac{T^{(tg)}_f}{T^{(g)}_f}\right)\\
    &\quad-\frac{1}{2}\theta^{(g)}_f(t, \xi, \lambda)\left(\frac{T^{(t^2g)}_f}{T^{(g)}_f}\right).
\end{aligned}
\end{equation}
\begin{definition}[Proposed Higher-Order SCT]
Take $f \in L^2(\mathbb{R})$ as the input function and take $g \in S(\mathbb{R})$
as the window. For a small $\alpha > 0$, define the higher order SCT with resolution $\alpha$ by
    \begin{align*}
    &S_{f}^{(g, \alpha)}(t, \xi, \lambda):=
    \iint_{\mathbb{R}^{2}} T_{f}^{(g)}(t, \eta, \gamma) h_{\alpha}(\xi-\tilde{\omega}_{f}^{(g)}(t, \eta, \gamma)) h_{\alpha}(\lambda-\tilde{\mu}_{f}^{(g)}(t, \eta, \gamma)) \mathrm{d} \eta \mathrm{d} \gamma
    \end{align*}
where $\tilde{\mu}_{f}^{(g)}(t, \xi, \lambda)$, $\tilde{\omega}_{f}^{(g)}(t, \xi, \lambda)$ are define as in \eqref{eqn:new_mu} and \eqref{eqn:new_omega}, respectively. Similarly, $h_{\alpha}$ is an “approximate $\delta$-function".
\end{definition}
\begin{proposition}[Reassignment ingredients for proposed high-order SCT]\label{main_prop.}
Suppose that $T^{(g)}_f$ is the chirplet transform defined in Definition \ref{CT}. Define 
$$ \theta^{(g)}_f(t, \xi, \lambda):=
\dfrac{\dfrac{1}{\pi i}\partial_t\left(\dfrac{\partial_t\left(\frac{\partial_t T^g_f}{T^g_f}\right)}{q^0_f}\right)}{2+\partial_t\left(\dfrac{q^1_f}{q^0_f}\right)},
$$ 
$$\tilde{\mu}^{(g)}_f(t, \xi, \lambda):=\dfrac{1}{2\pi i }\frac{\left(2q^0_f+\partial_t q^1_f\right)\partial_t \left(\dfrac{\partial_t T^g_f}{T^g_f}\right)-\partial^2_{tt}\left(\dfrac{\partial_t T^g_f}{T^g_f}\right)q^1_f}{2\left(q^0_f\right)^2+\partial_t q^1_f q^0_f-q^1_f\partial_t q^0_f},$$
$$\tilde{\omega}^{(g)}_f(t, \xi, \lambda):=\frac{\partial_t T^{(g)}_f }{2\pi iT^{(g)}_f }-\tilde{\mu}^{(g)}_f(t, \xi, \lambda)\left(\frac{T^{(tg)}_f}{T^{(g)}_f}\right)-\frac{1}{2}\theta^{(g)}_f(t, \xi, \lambda)\left(\frac{T^{(t^2g)}_f}{T^{(g)}_f}\right)$$
if $f(x)$ is of the form $A(x)e^{2\pi i\phi(x)}$ , $\log A(x)=\sum_{j=0}^3 [\log A]^{(j)}(t)\frac{(x-t)^j}{j!} $ , and $ \phi(x)$ is a cubic polynomial, then we have
$$Re(\theta^{(g)}_f(t, \xi, \lambda))=\phi'''(t), Re(\tilde{\mu}^{(g)}_f(t, \xi, \lambda))=\phi''(t),$$  $$Re(\tilde{\omega}^{(g)}_f(t, \xi, \lambda))=\phi'(t).$$ 
\end{proposition}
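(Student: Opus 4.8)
The plan is to reduce the claim to the pure-phase computation already carried out in the displays preceding the proposition, by absorbing the amplitude into the phase. Write
\[
f(x)=A(x)e^{2\pi i\phi(x)}=e^{2\pi i\Psi(x)},\qquad \Psi(x):=\phi(x)+\tfrac{1}{2\pi i}\log A(x)=\phi(x)-\tfrac{i}{2\pi}\log A(x).
\]
By hypothesis both $\phi$ and $\log A$ are polynomials of degree at most $3$, so $\Psi$ is a degree-$\le 3$ polynomial with \emph{complex} coefficients, say $\Psi(x)=\frac{\Theta_0}{3!}x^3+\frac{M_0}{2!}x^2+W_0x+c$. The constant $c$ multiplies $f$, $\partial_tT_f^{(g)}$, $T_f^{(tg)}$ and $T_f^{(t^2g)}$ by the same factor $e^{2\pi i c}$, which cancels in every quotient that appears in $\theta_f^{(g)}$, $\tilde\mu_f^{(g)}$ and $\tilde\omega_f^{(g)}$; hence we may assume $c=0$, which is exactly the form $\exp\!\big(2\pi i(\tfrac{\theta_0}{3!}x^3+\tfrac{\mu_0}{2!}x^2+\omega_0x)\big)$ treated before the proposition, but now with complex $\theta_0,\mu_0,\omega_0$.

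Next I would observe that nothing in that derivation uses reality of the coefficients: it rests only on $\partial_tT_f^{(g)}=T_{f'}^{(g)}$, on $f'=2\pi i\,\Psi'f$ with $\Psi'(x)=\Psi'(t)+\Psi''(t)(x-t)+\tfrac12\Psi'''(t)(x-t)^2$ (valid since $\Psi$ is cubic), and on $T_{(x-t)^jf}^{(g)}=T_f^{(t^jg)}$. These give $\partial_tT_f^{(g)}=2\pi i\big(\Psi'(t)T_f^{(g)}+\Psi''(t)T_f^{(tg)}+\tfrac12\Psi'''(t)T_f^{(t^2g)}\big)$, and hence the series identity $\partial_t\!\big(\tfrac{\partial_tT_f^{(g)}}{T_f^{(g)}}\big)=\sum_{j\ge0}2\pi i\,\Psi^{(j+2)}(t)\,\tfrac{q_f^j}{(j+1)!}$ collapses to $2\pi i\,\Psi''(t)\,q_f^0+\pi i\,\Psi'''(t)\,q_f^1$ because $\Psi^{(j)}\equiv0$ for $j\ge4$; one further $t$-derivative isolates $\Psi'''(t)$, and back-substitution isolates $\Psi''(t)$ and then $\Psi'(t)$, exactly reproducing the closed forms of $\theta_0$, $\mu_0+\theta_0t$ and $\omega_0+\mu_0t+\tfrac{\theta_0}{2}t^2$. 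Thus, for $f=e^{2\pi i\Psi}$ with $\Psi$ cubic, $\theta_f^{(g)}=\Psi'''(t)$, $\tilde\mu_f^{(g)}=\Psi''(t)$ and $\tilde\omega_f^{(g)}=\Psi'(t)$ identically, wherever $T_f^{(g)}$, $q_f^0$, $2+\partial_t(q_f^1/q_f^0)$ and the denominator of $\tilde\mu_f^{(g)}$ are nonzero (the computation being the usual formal one, since the idealized $f$ need not lie in $L^2$).

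Finally I would take real parts. Since $A>0$, $\log A$ is real-valued, so $[\log A]^{(j)}(t)\in\mathbb R$ and $\tfrac{1}{2\pi i}[\log A]^{(j)}(t)$ is purely imaginary; because $\phi$ has real coefficients, $\Psi^{(j)}(t)=\phi^{(j)}(t)-\tfrac{i}{2\pi}[\log A]^{(j)}(t)$ satisfies $Re\big(\Psi^{(j)}(t)\big)=\phi^{(j)}(t)$ for $j=1,2,3$. Combined with the previous step this yields $Re(\theta_f^{(g)})=\phi'''(t)$, $Re(\tilde\mu_f^{(g)})=\phi''(t)$ and $Re(\tilde\omega_f^{(g)})=\phi'(t)$, as claimed.

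I do not anticipate a genuine obstacle here; the step needing the most care is the middle one, namely checking that the page-long chain of substitutions and repeated $t$-differentiations producing the closed forms is a sequence of \emph{reversible} identities (every division admissible) and that it goes through verbatim with complex coefficients, so that the pure-phase identities may simply be quoted for $\Psi$ rather than re-derived. The concrete point to verify is the truncation of the series for $\partial_t\!\big(\tfrac{\partial_tT_f^{(g)}}{T_f^{(g)}}\big)$ to its two surviving terms, which is precisely where cubicness of $\Psi$, and nothing more, is used.
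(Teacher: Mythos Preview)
Your proposal is correct. The paper gives no separate proof of the proposition beyond the pure-phase derivation preceding it, and your complexification step---absorbing $\log A$ into a cubic complex phase $\Psi=\phi-\tfrac{i}{2\pi}\log A$, rerunning that same algebra verbatim, and then taking real parts---is precisely the natural way to pass from that derivation to the stated result with amplitude.
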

\begin{remark}
    It is obvious that, compared with the SCT in \cite{chen2023disentangling}, the new estimator for $\phi''(t)$ will degenerate to the ingredient given in \cite{chen2023disentangling} when $\theta^{(g)}_f $ vanishes. Therefore, as expected, the result of the proposed method is similar to that of the SCT in the case where the signal is a superposition of pure chirps. 
\end{remark}
This proposition somehow suggests that, if the target signal is close enough to a complex function with a cubic phase, then the new reassignment ingredient will indeed be a suitable estimator of the first three derivatives of the phase function. In the following, we will present some mathematical analysis to explain this fact explicitly. We first consider the following model, which is similar to the $\epsilon$--ICT function in \cite{chen2023disentangling}.
\begin{definition}[The $\epsilon$-intrinsic cubic type function ($\epsilon$-ICBT)]
    Suppose that $\epsilon$ is fixed and larger than $0$. A function $f: \mathbb{R} \rightarrow \mathbb{C}$ is said to be the $\epsilon$-intrinsic cubic type function ($\epsilon$-ICBT) if $f(x) = A(x) e^{i 2 \pi \phi(x)}$ where $A$ and $\phi$ have the following properties:
    $$
    \begin{gathered}
    A \in C^{3}(\mathbb{R}) \cap L^{\infty}(\mathbb{R}), \quad \phi \in C^{4}(\mathbb{R}), \\
    \inf _{x \in \mathbb{R}} \phi^{\prime}(x)>0, \quad \sup _{x \in \mathbb{R}} \phi^{\prime}(x)<\infty, \\
    A(x)>0, \quad\left|A^{\prime}(x)\right|,\left|A^{\prime \prime}(x)\right|,\left|A^{\prime\prime\prime}(x)\right|, \left|\phi^{(4)}(x) \right|\leq \epsilon\left|\phi^{\prime}(t)\right|, \quad \forall x \in \mathbb{R}.
    \end{gathered}
    $$
\end{definition}
The definition specifies the oscillatory function that locally acts like a cubic polynomial phase function with the closeness quantified by $\epsilon$. Furthermore, not only the variation of the IF but also that of the AM is controlled by the IF.
\begin{definition}[Superposition of well-separated $\epsilon$-ICBT components] 
    A function $f$ : $\mathbb{R} \rightarrow \mathbb{C}$ is said to be in the space $\mathcal{Y}_{\epsilon, \Delta}$ of the superposition of well-separated $\epsilon$-ICBT functions if there exists a finite $K$ such that
$$
f(x)=\sum_{k=1}^K f_k(x)=\sum_{k=1}^K A_k(x) e^{2 \pi i \phi_k(x)},
$$
    where each $f_k$ is an $\epsilon$-ICBT function, and their respective phase functions $\phi_k$ satisfy
$$
\left|\phi_k^{\prime}(t)-\phi_l^{\prime}(t)\right|+\left|\phi_k^{\prime \prime}(t)-\phi_k^{\prime \prime}(t)\right| \geq 2 \Delta\,.
$$
\end{definition}
Now, for any $g\in \mathcal{S}(\mathbb{R})$, we define 
$$\mathcal{C}(g(x))(\xi, \lambda):=\int_{\mathbb{R}}g(x)e^{-2\pi i \xi x-\pi i \lambda x^2}dx$$ 
\begin{theorem}[Approximation of the reassignment ingredients]\label{Main_theorem}
Suppose that $f \in \mathcal{Y}_{\epsilon, \Delta}$. Pick a window function $g \in \mathcal{S}(\mathbb{R})$ that satisfies, for all $k = 0,1,2,\cdots, K$ and , $n=0,1,2\cdots$, \\$\left|\mathcal{C}\left(x^{{n}} g(x) e^{i \frac{1}{3}\pi \phi_k'''(t)x^3}\right)(\xi, \eta)\right|\leq \frac{\sqrt{\Delta}D_n\epsilon}{\sqrt{|\xi|+|\eta|}}$ for some $D_n>0$. Let $\tilde{\epsilon}=\epsilon^{\frac{1}{10}}$. Then, provided $\epsilon$ (and thus also $\tilde{\epsilon}$ ) is sufficiently small, the following inequalities hold:

For each tuple $(t, \xi, \lambda) \in M_k$ , where $$
M_k:=\left\{(t, \xi, \lambda):\left|\xi-\phi_k^{\prime}(t)\right|+\left|\lambda-\phi_k^{\prime \prime}(t)\right|<\Delta\right\},
$$such that $\left|T_f^{(g)}(t, \xi, \lambda)\right|>\tilde{\epsilon}$ , $\left|q^0_f(t, \xi, \lambda)\right|>$ $\tilde{\epsilon}$ and $\pi\left|2+\partial_t\left(\dfrac{q^1_f}{q^0_f}\right)\right|>\Tilde{\epsilon}$, we have
$$
\left|\omega_f^{(g)}(t, \xi, \lambda)-\phi_k^{\prime}(t)\right| \leq \tilde{\epsilon}, $$
$$\left|\mu_f^{(g)}(t, \xi, \lambda)-\phi_k^{\prime \prime}(t)\right| \leq \tilde{\epsilon}, $$\text { and }$$|\theta_f^{(g)}(t,\xi,\lambda) - \phi'''_k(t)| \leq \Tilde{\epsilon}. 
$$
\end{theorem}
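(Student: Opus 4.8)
The plan is to follow the three-step template standard for synchrosqueezing-type approximation results (compare \cite{chen2023disentangling, oberlin2015second, daubechies2011synchrosqueezed}): \emph{localize} the chirplet transform to a single mode near its ridge, \emph{replace} that mode by an exact cubic-phase model for which Proposition \ref{main_prop.} already supplies the exact answer, and then \emph{propagate} the resulting $O(\epsilon)$ perturbation through the rational expressions defining $\omega_f^{(g)}, \mu_f^{(g)}, \theta_f^{(g)}$, using the three non-degeneracy hypotheses to keep the denominators bounded away from $0$.

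\textbf{Step 1 (localization).} Fix $(t,\xi,\lambda)\in M_k$. Since $\partial_t T_f^{(g)} = T_{f'}^{(g)}$ and $\partial_t$ turns a polynomial window $x^n g(x)$ into a finite combination of such windows, every quantity entering the ingredients ($T_f^{(g)}, T_f^{(tg)}, T_f^{(t^2 g)}$, their first and second $t$-derivatives, $q^0_f, q^1_f, \partial_t q^0_f, \partial_t q^1_f$, and the nested derivatives inside $\theta_f^{(g)}$) is a finite sum over the modes of chirplet transforms of the individual $f_l$ against polynomial windows. For $l\ne k$, after de-chirping by $\lambda$ and by the cubic term $\tfrac16\phi_l'''(t)(x-t)^3$ of the $l$-th mode, the residual linear and quadratic phase offsets have sizes whose sum is $\gtrsim\Delta$ by the separation condition $|\phi_k'(t)-\phi_l'(t)|+|\phi_k''(t)-\phi_l''(t)|\ge 2\Delta$; hence each such term is controlled by a multiple of $\mathcal{C}(x^n g(x)e^{i\frac{\pi}{3}\phi_l'''(t)x^3})$ evaluated at an argument of magnitude $\gtrsim\Delta$, which by hypothesis is $\le \sqrt{\Delta}D_n\epsilon/\sqrt{|\xi|+|\eta|} = O(\epsilon)$. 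Summing over the finitely many $l\ne k$, the full transform and all its derived quantities agree with the $k$-th mode's up to $O(\epsilon)$.

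\textbf{Step 2 (model replacement).} For the $k$-th mode, Taylor-expand $\phi_k$ to third order with $C^4$ remainder and $\log A_k$ to zeroth order, using $|A_k'|, |A_k''|, |A_k'''|, |\phi_k^{(4)}| \le \epsilon|\phi_k'(t)|$ to bound $f_k - \tilde{f}_k$, where $\tilde{f}_k(x) = A_k(t)\exp(2\pi i[\phi_k(t)+\phi_k'(t)(x-t)+\tfrac12\phi_k''(t)(x-t)^2+\tfrac16\phi_k'''(t)(x-t)^3])$. Pairing this difference against the Schwartz window $x^n g(x-t)e^{-2\pi i\xi(x-t)-i\pi\lambda(x-t)^2}$, and absorbing the residual cubic factor $e^{i\frac{\pi}{3}\phi_k'''(t)(x-t)^3}$ into the window via the stated hypothesis, shows the difference integrates to $O(\epsilon)$; the same holds after $\partial_t$ and $\partial_{tt}$. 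Hence each building block equals its $\tilde{f}_k$-value up to $O(\epsilon)$. Since $\log A_k(t)$ is constant (degree $\le 3$) and the model phase is cubic, Proposition \ref{main_prop.} applies to $\tilde{f}_k$ and gives $\omega^{(g)}_{\tilde{f}_k} = \phi_k'(t)$, $\mu^{(g)}_{\tilde{f}_k} = \phi_k''(t)$, $\theta^{(g)}_{\tilde{f}_k} = \phi_k'''(t)$, all real.

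\textbf{Step 3 (propagation) and main obstacle.} The hypotheses $|T_f^{(g)}|>\tilde{\epsilon}$, $|q^0_f|>\tilde{\epsilon}$, $\pi|2+\partial_t(q^1_f/q^0_f)|>\tilde{\epsilon}$, together with the $O(\epsilon)$ agreement and $\epsilon = \tilde{\epsilon}^{10}\ll\tilde{\epsilon}$, force the corresponding quantities for $\tilde{f}_k$ also to be $\gtrsim\tilde{\epsilon}$ in magnitude. A ratio of quantities agreeing within $O(\epsilon)$ with denominator $\gtrsim\tilde{\epsilon}$ agrees within $O(\epsilon\tilde{\epsilon}^{-2})$; the nested form of $\theta_f^{(g)}$ (a $t$-derivative of one ratio, divided by $2$ plus another ratio, the differentiation itself costing a further $\tilde{\epsilon}^{-2}$) and the analogous nesting in $\mu_f^{(g)}, \omega_f^{(g)}$ give a worst-case accumulated loss $\tilde{\epsilon}^{-c}$ with $c\le 9$, so $\epsilon\tilde{\epsilon}^{-c} = \tilde{\epsilon}^{10-c}\le\tilde{\epsilon}$; taking real parts (and shrinking $\epsilon$ once more to swallow the implied constants) yields the three claimed inequalities. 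The genuine work is the bookkeeping here: one must verify that the total power $c$ of $\tilde{\epsilon}^{-1}$ accumulated across all nested divisions — including the hidden ones produced when $\partial_t$ hits a ratio — never exceeds $9$, and that the constants $D_n$ assemble into a single finite constant. A secondary nuisance is making the $t$-differentiation rigorous: confirming that $\partial_t T_f^{(t^n g)}$ is a finite combination of chirplet transforms against polynomial windows whose model-approximation error is still $O(\epsilon)$ — which is exactly why the decay hypothesis on $g$ is imposed for all $n$, not merely $n\le 2$.
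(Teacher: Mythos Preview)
Your proposal is correct and follows essentially the same three-stage template as the paper's proof (Appendix~C): localize to the $k$-th mode off-ridge via the decay hypothesis on $\mathcal{C}(x^n g\,e^{i\pi\phi_k'''(t)x^3/3})$, replace $f_k$ by its cubic-phase third-order Taylor model, and control denominators via the three non-degeneracy hypotheses. The core estimates you need in Steps~1--2 are exactly the paper's Lemmas~\ref{Lemma_first}--\ref{Lemma_D3}.

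The one organizational difference worth flagging is in your Step~3. You propose a generic perturbation argument --- each building block moves by $O(\epsilon)$, so the rational expression moves by $O(\epsilon\tilde\epsilon^{-c})$ for some $c\le 9$ to be checked by bookkeeping. The paper instead carries out an explicit algebraic rearrangement (Lemma~\ref{Key_lemma}): it writes $\pi|Q|\,|\theta_f^{(g)}-\phi_k'''|$ directly as a sum whose numerators are precisely the three ``small'' quantities from Lemmas~\ref{Lemma_D1}--\ref{Lemma_D3}, over denominators $|T_f^{(g)}q_f^0|$ and $|T_f^{(g)}|^3|q_f^0|^2$. This bypasses the model function $\tilde f_k$ and Proposition~\ref{main_prop.} entirely, and reveals the denominator power concretely ($c=6$, not an abstract $c\le 9$), so no separate bookkeeping step is needed. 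Your route via $\tilde f_k$ is cleaner conceptually but leaves that bookkeeping as acknowledged residual work; the paper's route is more laborious algebraically but makes the exponent explicit.
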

The proof of this theorem is in the Appendix since it is an extension of Theorem 1 in \cite{chen2023disentangling}. The proof includes some standard estimations as well as some new estimates for $\theta^{g}_f$.
\clearpage
\subsection{Computation for Implementation}
Considering the stability of computation, taking a derivative directly is not a good implementation method. To achieve better implementation results, it is important to substitute all (or nearly all) of the derivatives. Set $T^{(g)}_f$ to be the chirplet transform with window function $g(x)\in \mathcal{S}(\mathbb{R})$ and input signal $f\in \mathcal{L}^2(\mathbb{R})$.
\begin{proposition}[Replacement of the derivative (i)]\label{Elimination}
From Definition \ref{CT}, we have 
\begin{equation}
     \partial_t T_f^{g}=-T_f^{\left(g^{\prime}\right)}+2 \pi i \xi T_f^{g}+2 \pi i \lambda T_f^{t g},
\end{equation}
\begin{equation}
\begin{aligned}
    \partial_{t t}^2 T_f^{g}&=T_f^{g^{\prime \prime}}-4 \pi i \xi T_f^{g^{\prime}}-2 \pi i \lambda T_f^{t g^{\prime}}+(2 \pi i \xi)^2 T_f^{g}\\
    &\hspace{0.5cm}+2(2 \pi i \xi)(2 \pi i \lambda) T_f^{t g}-2 \pi i \lambda T_f^{\left(g+t g^{\prime}\right)}+(2 \pi i \lambda)^2 T_f^{t^2 g},
\end{aligned}
\end{equation}
and 
\begin{equation}
    \begin{aligned}
        \partial_{ttt} T_f^{g} = -T_f^{g'''}&+i6 \pi \xi T_f^{g''}+i6 \pi \lambda T_f^{tg''}\\
    + T_f^{g'} &\left[i6 \pi \lambda-12(i\pi \xi)^2\right]
    + T_f^{tg'} \left[-24 (i \pi \xi)(i  \pi \lambda)\right]\\
    + T_f^{g} &\left[8(i \pi \xi)^3-12(\pi i \lambda)(i \pi \xi)\right]
    + T_f^{t^2g'} \left[-12(i \pi \lambda)^2\right]\\
    + T_f^{tg} &\left[24(i \pi \xi)^2(\pi i \lambda)-12(i \pi \lambda)^2\right]
    + T_f^{t^2g} \left[24(i \pi  \lambda)^2(i \pi \xi)\right]\\
    + T_f^{t^3g} &\left[8(i \pi \lambda)^3\right]\,,
    \end{aligned}
\end{equation}
in which $$T_f^{g'},\,T_f^{tg},\, T_f^{g''},\,T_f^{tg'},\,T_f^{t^2g},\,T_f^{g'''},\,T_f^{tg''},\,T_f^{t^2g'},\,T_f^{t^3g}$$ are respectively the CTs of f with the window functions
\begin{equation*}
\,g'(t),\,tg(t),\,g''(t),\,tg'(t),\,t^2g(t),\,g'''(t),\,tg''(t),\,t^2g'(t),\,t^3g(t).
\end{equation*}
\end{proposition}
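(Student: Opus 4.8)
\section*{Proof proposal for Proposition \ref{Elimination}}

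The plan is to prove everything from a single differentiation-under-the-integral identity and then iterate. Fix $(\xi,\lambda)$ and treat them as constants while differentiating in $t$. For any window $h \in \mathcal{S}(\mathbb{R})$, the variable $t$ enters $T_f^{(h)}(t,\xi,\lambda)=\int_{\mathbb{R}} f(x)\,h^{*}(x-t)\,e^{-2\pi i \xi (x-t)-i\pi\lambda(x-t)^2}\,dx$ in exactly three places: the window argument $h^{*}(x-t)$, the linear chirp factor, and the quadratic chirp factor. Since $h\in\mathcal{S}(\mathbb{R})$ and $f\in L^2(\mathbb{R})$, the integrand and its $t$-derivative are dominated, locally uniformly in $t$, by an $L^1$ function of $x$ (a Schwartz function of $x-t$ times $|f|$, which is $L^1$ by Cauchy--Schwarz), so differentiation under the integral is legitimate. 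Carrying out $\partial_t$ and recognizing each resulting integral as a chirplet transform with a modified window gives the key identity
\begin{equation*}
\partial_t T_f^{(h)} = -T_f^{(h')} + 2\pi i\xi\, T_f^{(h)} + 2\pi i\lambda\, T_f^{(th)},
\end{equation*}
valid for every Schwartz window $h$. Here $(th)(x)=x\,h(x)$ before the shift $x\mapsto x-t$; the factor $(x-t)$ is real and passes through the conjugation, and $\partial_x$ commutes with conjugation, so no spurious conjugates appear and the bracket $-T_f^{(g')}+2\pi i\xi T_f^{(g)}+2\pi i\lambda T_f^{(tg)}$ is exactly what one reads off the integral.

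The first displayed formula is then just this identity with $h=g$. For the second formula I would apply the identity to each of the three terms on the right-hand side of the first formula. Because the coefficients $2\pi i\xi$ and $2\pi i\lambda$ are constant in $t$, no extra product-rule terms arise from them; one only needs the window derivatives $g'\mapsto(g'',\,g',\,tg')$, $g\mapsto(g',\,g,\,tg)$, and $tg\mapsto\big((tg)'=g+tg',\ tg,\ t^2g\big)$. Expanding and collecting terms with the same window produces the stated expression, with the term $-2\pi i\lambda\,T_f^{(g+tg')}$ coming precisely from differentiating the window of the last branch.

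For the third formula I would iterate once more, applying the key identity to each of the seven terms of the second formula and using the window derivatives $g''\mapsto g'''$, $tg'\mapsto(g'+tg'')$, $g\mapsto g'$, $tg\mapsto(g+tg')$, $(g+tg')\mapsto(2g'+tg'')$, and $t^2g\mapsto(2tg+t^2g')$, then regrouping by window among $g''',\,g'',\,tg'',\,g',\,tg',\,g,\,tg,\,t^2g',\,t^2g,\,t^3g$. There is no analytic difficulty here; the step I expect to be the main obstacle is the bookkeeping, namely tracking every window transformation and correctly summing the $(\xi,\lambda)$-polynomial coefficients across the many contributions so that the final list of windows and coefficients matches the stated formula. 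A clean way to organize this is to keep the running expression always in the form $\sum_j c_j(\xi,\lambda)\,T_f^{(h_j)}$ and apply, termwise, the linear operator $c_j T_f^{(h_j)}\mapsto c_j\big(-T_f^{(h_j')}+2\pi i\xi\,T_f^{(h_j)}+2\pi i\lambda\,T_f^{(th_j)}\big)$, merging duplicate windows after each pass; applying this operator twice and three times to $T_f^{(g)}$ yields the second and third formulas, respectively.
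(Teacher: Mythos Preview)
Your proposal is correct and is exactly the natural argument. The paper does not supply a proof of this proposition---it is stated as a direct computational consequence of Definition~\ref{CT}---so your single differentiation-under-the-integral identity $\partial_t T_f^{(h)} = -T_f^{(h')} + 2\pi i\xi\, T_f^{(h)} + 2\pi i\lambda\, T_f^{(th)}$ (valid for any Schwartz window $h$) together with termwise iteration is precisely what the paper leaves implicit.
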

\begin{proposition}[Replacement of the derivative (ii)]
To replace the derivative of the denominator in \eqref{eqn:new_mu}, we compute 
\begin{equation}
    \begin{aligned}
    \partial_t \left(\dfrac{\partial_t T^{g}_f}{T^{g}_f}\right)= \dfrac{1}{(T^{g}_f)^2}\left[\partial_{tt} T^{g}_fT^{g}_f-\left(\partial_t T^{g}_f\right)^2\right]
\end{aligned}
\end{equation}
and 
\begin{equation}
    \begin{aligned}
    \partial_{tt}\left(\dfrac{\partial_t T^{g}_f}{T^{g}_f}\right) =\dfrac{1}{(T^{g}_f)^4}\left[\left(\partial_{ttt}T^{g}_f T^{g}_f+\partial_{tt}T^{g}_f\partial_t T^{g}_f-2\partial_tT^{g}_f\partial_{tt}T^{g}_f\right)(T^{g}_f)^2\right.\\
    \left.-2T^{g}_f\partial_tT^{g}_f\left(\partial_{tt} T^{g}_fT^{g}_f-\left(\partial_t T^{g}_f\right)^2\right)\right]\, .
\end{aligned}
\end{equation}

\end{proposition}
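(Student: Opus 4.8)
The plan is to treat both identities as direct consequences of the quotient rule, applied once for the first and twice for the second, after establishing that $T_f^{(g)}(t,\xi,\lambda)$ is a smooth, nonvanishing function of $t$ on the region of interest. Throughout I would abbreviate $T:=T_f^{(g)}$ and regard $\xi,\lambda$ as frozen parameters, so that only $\partial_t$ acts. Smoothness in $t$ is already guaranteed by Proposition \ref{Elimination}, which expresses $\partial_t T$, $\partial_{tt}T$, $\partial_{ttt}T$ as chirplet transforms of $f$ against the Schwartz windows $g',\,tg,\dots$; differentiation under the integral sign is legitimate because $g\in\mathcal{S}(\mathbb{R})$ and $f\in L^2(\mathbb{R})$. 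Non-vanishing of $T$ is assumed implicitly, since we divide by $T$; on the set where $T\ne 0$ every expression below is well defined.

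\emph{First identity.} With $u:=\partial_t T$ and $v:=T$, the quotient rule gives $\partial_t(u/v)=(v\,\partial_t u-u\,\partial_t v)/v^2$. Substituting $\partial_t u=\partial_{tt}T$ and $\partial_t v=\partial_t T$ yields $\partial_t(\partial_t T/T)=(\partial_{tt}T\,T-(\partial_t T)^2)/T^2$, which is exactly the claimed formula.

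\emph{Second identity.} I would write the result of the first step as $N/D$ with $N:=\partial_{tt}T\,T-(\partial_t T)^2$ and $D:=T^2$, and differentiate once more. The numerator derivative is $\partial_t N=\partial_{ttt}T\,T+\partial_{tt}T\,\partial_t T-2\partial_t T\,\partial_{tt}T$, where the first two terms come from the product rule on $\partial_{tt}T\,T$ and the last from the chain rule on $(\partial_t T)^2$; the denominator derivative is $\partial_t D=2T\,\partial_t T$. Feeding these into $\partial_t(N/D)=(\,\partial_t N\cdot D-N\cdot\partial_t D\,)/D^2$ and using $D^2=T^4$ reproduces, term by term, the displayed right-hand side, with the factor $T^2$ multiplying $\partial_t N$ and the factor $-2T\,\partial_t T$ multiplying $N$.

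There is essentially no analytic obstacle here: the content is purely algebraic bookkeeping, and the only points requiring a word of care are (a) justifying that $\partial_t$ may be moved inside the integral defining $T$, which is handled by the Schwartz decay of $g$ exactly as already used for Proposition \ref{Elimination}, and (b) keeping track of signs in $\partial_t N$, where the seemingly redundant combination $\partial_{tt}T\,\partial_t T-2\partial_t T\,\partial_{tt}T$ is precisely what the product-plus-chain rule produces and should not be prematurely simplified if one wants to match the stated form verbatim. Since no cancellation is needed, the identities hold on the whole region $\{T\ne 0\}$.
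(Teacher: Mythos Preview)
Your proposal is correct and matches the paper's approach: the proposition is stated in the paper without proof precisely because it is nothing more than the quotient rule applied once and then twice, exactly as you lay out. Your remarks about smoothness via Proposition~\ref{Elimination} and the non-vanishing hypothesis on $T$ are appropriate caveats, and your observation about not prematurely simplifying $\partial_{tt}T\,\partial_t T-2\partial_t T\,\partial_{tt}T$ is exactly right if the goal is to match the displayed form verbatim.
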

\begin{proposition}[Replacement of the derivative for $q^j_f$]
For $j=0,1$, one has
\begin{equation}
    \begin{aligned}
        q^j_f(t, \xi, \lambda)=\frac{1}{\left(T^g_f\right)^2}\left[T^{t^{j+1}g}_fT^{g'}_f-T^{t^{j+1}g'}_fT^{g}_f+i 2\pi  \lambda\left(T^{t^{j+2}g}_fT^{g}_f-T^{t^{j+1}g}_fT^{tg}_f\right)\right],
    \end{aligned}
\end{equation}
\begin{equation}
\begin{aligned}
\partial _t q^j_f(t,\xi, \lambda) &= \frac{1}{(T^g_f)^2}\left[\left(\partial_t T^{t^{j+1}g}_fT^{g'}_f+T^{t^{j+1}g}_f\partial_t T^{g'}_f-\partial_t T^{t^{j+1}g'}_fT^{g}_f-T^{t^{j+1}g'}_f\partial_t T^{g}_f\right)  \right.\\
&\hspace{2cm}+\left.2\pi i \lambda\left(\partial_t T^{t^{j+2}g}_fT^{g}_f+T^{t^{j+2}g}_f\partial_t T^{g}_f-\partial_t T^{t^{j+1}g}_fT^{tg}_f\right.\right.\\
&\hspace{3.5cm}\left.\left.-T^{t^{j+1}g}_f\partial_t T^{tg}_f\right)\right]\\
&\quad-\frac{2\partial_t T^g_f}{(T^g_f)^3}\left[T^{t^{j+1}g}_fT^{g'}_f-T^{t^{j+1}g'}_fT^{g}_f+2\pi i \lambda\left(T^{t^{j+2}g}_fT^{g}_f-T^{t^{j+1}g}_fT^{tg}_f\right)\right],
\end{aligned}
\end{equation}
where $a \in \mathbb{N}$ and $b\in \mathbb{N}\cup\{0\}$,
\begin{equation}
\begin{aligned}
    \partial_t T^{t^ag^{(b)}}_f(t,\xi, \lambda) = -aT^{t^{a-1}g^{(b)}}_f-T^{t^ag^{(b+1)}}_f+2\pi i \xi T^{t^ag^{(b)}}_f+2\pi i \lambda T^{t^{a+1}g^{(b)}}_f 
\end{aligned}
\end{equation}
\end{proposition}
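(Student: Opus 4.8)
All three displayed identities are formal consequences of differentiation under the integral sign, so the plan is to prove the last one first — it is the engine — and then assemble the closed form for $q^j_f$ and the formula for $\partial_t q^j_f$ by the quotient and product rules. Assume throughout, as in Definition~\ref{CT}, that $f\in L^2(\mathbb{R})$ and $g\in\mathcal{S}(\mathbb{R})$.

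\textbf{Step 1: the basic differentiation identity.} I would start from
$T_f^{t^a g^{(b)}}(t,\xi,\lambda)=\int_{\mathbb{R}} f(x)\,(x-t)^a (g^{(b)})^{*}(x-t)\,e^{-2\pi i\xi(x-t)-i\pi\lambda(x-t)^2}\,dx$
and differentiate under the integral in $t$. This is legitimate because $g$ being Schwartz makes every window $y\mapsto y^{c}g^{(d)}(y)$ Schwartz, hence $L^2$, so by Cauchy--Schwarz (using $f\in L^2$) the integrand and all its $t$-derivatives are dominated, locally uniformly in $t$, by an $L^1$ function. Carrying out the $t$-derivative, the factor $(x-t)^a$ contributes $-a(x-t)^{a-1}$, the factor $(g^{(b)})^{*}(x-t)$ contributes $-(g^{(b+1)})^{*}(x-t)$, and the chirp exponential contributes the multiplier $2\pi i\xi+2\pi i\lambda(x-t)$; collecting the four integrals yields exactly $\partial_t T^{t^a g^{(b)}}_f=-a\,T^{t^{a-1}g^{(b)}}_f-T^{t^a g^{(b+1)}}_f+2\pi i\xi\,T^{t^a g^{(b)}}_f+2\pi i\lambda\,T^{t^{a+1}g^{(b)}}_f$, which is the last displayed formula. (The special cases $(a,b)=(0,0)$ and $(a,b)=(0,1)$ recover the first identity in Proposition~\ref{Elimination}.)

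\textbf{Step 2: closed form for $q^j_f$, and Step 3: its derivative.} By Definition~\ref{q-th_operator}, $q^j_f=(j+1)\frac{T^{t^jg}_f}{T^g_f}+\partial_t\bigl(\frac{T^{t^{j+1}g}_f}{T^g_f}\bigr)$. I would expand the second term by the quotient rule and then substitute Step~1 with $(a,b)=(j+1,0)$ into $\partial_t T^{t^{j+1}g}_f$ and with $(a,b)=(0,0)$ into $\partial_t T^g_f$. Two cancellations occur: the $2\pi i\xi\,T^{t^{j+1}g}_f T^g_f$ contributions cancel between the two pieces of the quotient rule, and the leftover $-(j+1)T^{t^jg}_f T^g_f$ is annihilated by the $(j+1)\frac{T^{t^jg}_f}{T^g_f}$ term; what remains is precisely the stated formula for $q^j_f$. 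For $\partial_t q^j_f$, write $q^j_f=N/(T^g_f)^2$ with $N$ the bracketed numerator just obtained, apply $\partial_t q^j_f=(T^g_f)^{-2}\partial_t N-2(T^g_f)^{-3}(\partial_t T^g_f)N$, and expand $\partial_t N$ by the product rule term by term, leaving each $\partial_t T^{(\cdot)}_f$ unexpanded — this reproduces the displayed expression verbatim, and every remaining $\partial_t T^{t^a g^{(b)}}_f$ is then eliminated using the identity of Step~1.

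\textbf{Main obstacle.} There is no conceptual difficulty here; the work is bookkeeping of indices and signs. The one genuinely nonobvious point is in Step~1: differentiating $e^{-i\pi\lambda(x-t)^2}$ in $t$ produces the multiplier $2\pi i\lambda(x-t)$, which \emph{raises} the power of $(x-t)$ in the window by one, converting a $T^{t^a g^{(b)}}_f$ term into a $T^{t^{a+1}g^{(b)}}_f$ term; tracking this index shift correctly, together with keeping the sign of the chirp phase and the conjugation on $g$ consistent with Definition~\ref{CT}, is the only thing that requires care.
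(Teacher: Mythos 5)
Your proposal is correct: the computation of $\partial_t T^{t^a g^{(b)}}_f$ by differentiating under the integral (with the $(x-t)$-power shift from the chirp phase handled properly), followed by the quotient/product-rule bookkeeping with the $2\pi i\xi$ cancellation and the absorption of the $(j+1)T^{t^jg}_f/T^g_f$ term, reproduces exactly the stated identities. This is the same direct verification the paper relies on (it states the proposition without proof as a routine computation), so there is nothing to add.
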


\subsection{Proposed Algorithm}
\begin{spacing}{1.1}
\begin{algorithm}[H]
\caption{Higher Order Synchrosqueezed Chirplet Transforms (HSCT)}
\begin{flushleft}
\hspace*{\algorithmicindent} \textbf{Input} Signal $\mathbf{x}\in\mathbb{C}^n$ or $\mathbb{R}^n$, sampling rate $f_s$, window vector $\mathbf{g}\in\mathbb{C}^{2Q+1}$ or $\mathbb{R}^{2Q+1}$ and suitable $N\geq 2Q+1$\\
\hspace*{\algorithmicindent} \textbf{Output} $\mathbf{V}^{(g)}_x\in\mathbb{C}^{n\times F\times C}$ where $F, C$ are the length of frequency axis and chirp axis, respectively.\\
\end{flushleft}
\label{alg:lead}
\begin{algorithmic}[1]
\State{Build the time ticks $\mathbf{t}\in\mathbf{R}^n$}
\State{Compute resolution of frequency $df=\frac{f_s}{N}=\frac{1}{dt\cdot N}$} and chirp $d\lambda=\frac{2f_s^2}{N^2}$
\State{Build the input frequency ticks $\mathbf{f}\in\mathbb{R}^{N}$ and chirp rate ticks $\mathbf{c}\in\mathbb{R}^{N}$}
\State{Compute the derivative of window function, $\mathbf{g}'$, $\mathbf{g}''$, $\mathbf{g}'''$}
\State Let $F=\frac{\mathbf{f}[-1]}{\alpha}$ and $C=\frac{2\mathbf{c}[-1]}{\alpha}$
\State Create a $n\times F\times C$ matrix $\mathbf{V}^{(g)}_x$.
\For{$i=1\text{~to~}n$}
\For{$k=1\text{~to~}C$}
\State Let $\mathbf{y}_{a,b}=\mathbf{x}[i-Q:i+Q]\cdot (\mathbf{t}[i-Q:i+Q]-\mathbf{t}_i)^b\cdot \mathbf{g}^{(a)}\cdot$\hspace{3cm}
\linebreak\hspace*{\algorithmicindent}\hfill$\exp\left(-i\pi\mathbf{c}_k(\mathbf{t}[i-Q:i+Q]-\mathbf{t}_i)^2\right)$
\State Compute $\mathbf{Y}^{(x^bg^{(a)})}=dt\cdot\exp(i2\pi\frac{0:(N-1)}{N})\cdot$\func{fft}$(\mathbf{y}_{a,b}$, $N)\in\mathbb{C}^N$ 
\State {Choose the subvector of $\mathbf{Y}^{(x^bg^{(a)})}$ corresponding to non-negative\quad\quad\quad\quad \linebreak\hspace*{\algorithmicindent}frequency. \Comment{Approximate $T^{(x^bg^{(a)})}_x(t_i,:,\lambda_k)$}}
\State Rewrite $\mathbf{Y}^{(x^bg^{(a)})}=\mathbf{Y}^{(x^bg^{(a)})}$
\State Compute $\partial_t\mathbf{Y}^{(x^bg^{(a)})}$, $\partial_{tt}\mathbf{Y}^{(x^bg^{(a)})}$ and $\partial_{ttt}\mathbf{Y}^{(x^bg^{(a)})}$
\State Compute $\mathbf{q}_0$, $\mathbf{q}_1$, $\partial_t\mathbf{q}_0$, $\partial_t\mathbf{q}_1$, $\partial_t\left(\frac{\partial_t \mathbf{Y}^{(x^bg^{(a)})}}{\mathbf{Y}^{(x^bg^{(a)})}}\right)$ and $\partial_{tt}\left(\frac{\partial_t \mathbf{Y}^{(x^bg^{(a)})}}{\mathbf{Y}^{(x^bg^{(a)})}}\right)$
\State Compute $\boldsymbol{\theta}$, $\boldsymbol{\mu}$, $\boldsymbol{\omega}$
\State Normalized $\boldsymbol{\mu}$, $\boldsymbol{\omega}$ to $F$, and $C$ as \var{muIdx} and \var{omegaId}
\For{$j=1\text{~to~}F$}
\If{\func{abs}$(\mathbf{Y}^{(g)}[j])$>\var{thres}} \Comment{Avoid denominator close to zero}.
\State Reassign $\mathbf{V}^{(g)}_{\mathbf{x}}[i, \var{omegaIdx}[j], \var{muIdx}[j]] += \mathbf{Y}^{(x^bg^{(a)})}[j]$
\EndIf
\EndFor
\EndFor
\EndFor
\end{algorithmic}
\end{algorithm}
\end{spacing}

\clearpage

\section{Experimental Results}
\label{sec:results}
In this section, we present some experimental results to show the improvement given by the proposed method, utilizing the window function $g_0(t) = e^{-\pi t^2}$. We evaluate the effectiveness of each method from various viewpoints, quantifying the energy concentration of the distribution using the Rényi entropy and assessing the fidelity using $2$- and $3$-dimensional Earth Mover's Distances (EMDs), as detailed in the appendix. In the discussion on the EMD, two scenarios are considered. The first one is to project the $3$-dimensional TFC representations onto the $2$-dimensional TF representation, defined as follows:
\begin{equation*}
\mathfrak{T}f^{(g)}(t, \xi):=\int_{-\infty}^{\infty}\left|T_f^{(g)}(t, \xi, \lambda)\right| \mathrm{d} \lambda.
\end{equation*}
Here, by fixing $t$, we compute the one-dimensional EMD of the frequency distribution, followed by averaging the EMD curves over all time instances. The second scenario is to directly address the $3$-dimensional TFC representation. By fixing the time $t$, we compute the $2$-dimensional EMD of frequency and chirp distributions and then average the EMD over all time instances.
\subsection{The Results for Large Chirp-Rate Variation}
Recall that the most suitable condition for the SCT is that the target signal is a pure chirp signal or close to a pure chirp signal, which is called the $\epsilon$--intrinsic chirp type function ($\epsilon$--ICT function) \cite{chen2023disentangling}. However, when this is not the case, then the performance of the SCT will be limited.

The large variation in the instantaneous frequency will make the SCT perform badly. By contrast, the proposed method can linearly detect the variance in the chirp rate and hence has much better performance. Consider the signal with a varied chirp rate as follows  
    $$
   \begin{aligned}
    & x_{2}=\exp \left(i 2 \pi\left(12t + 12\exp(-\frac{1}{2}(t-3)^2)\right)\right)\,.
\end{aligned}
   $$
Note that $x_2$ is a signal with a heat kernel in its phase term and has a varied chirp rate. The instantaneous frequency for $x_{2}$ is $ f_{2} = 12- 12(t-3)\exp(-\frac{1}{2}(t-3)^2)$.
\begin{table}[h]
\caption{Evaluation for the signal $x_2$ using different metrics.  Comparison between our method and the previous method.}
\begin{center}
\begin{tabular}
{
>{\centering\arraybackslash}m{2.2cm} ||
>{\centering\arraybackslash}m{2.9cm}
>{\centering\arraybackslash}m{2.6cm}
>{\centering\arraybackslash}m{2.6cm}
>{\centering\arraybackslash}m{2.6cm}
}
\noalign{\hrule height 1.5pt}
Method  & Rényi Entropy & TF EMD & TFC EMD  \\
\noalign{\hrule height 1.5pt}
CT  & 22.6166 & 3.5035 &12.0833\\
SCT  & 15.6369 & 0.5583 &2.2828\\
Proposed  & 13.8553 & 0.4017 &1.6203 \\
\noalign{\hrule height 1.5pt}
\end{tabular}
\end{center}
\label{tab:heat}
\end{table}

According to Fig.\ref{scatter}, the scattering plot of the EMD or the magnitude of $|\phi'''(t)|$ can tell us some stories. From the left subfigure in Fig. \ref{scatter}, we can see that $|\phi'''(t)|$ is far from $0$ and the chirp rate of the signal varies very fast. In the middle subfigure, the EMD is plotted versus the time. Considering the magnitude of $|\phi'''(t)|$ at the time instant and plotting the scattering, the right subfigure is obtained. This subfigure can probably be divided into left and right halves. In the right-half part of the subfigure, the distance gap between the two methods becomes wider when the magnitude of the $3^{rd}$ derivative of the phase function gets larger. The left-hand part consists of two kinds of points. One of them is the points near $t = 3$, where $|\phi'''(t)|$ vanishes, and the other one is the complicated boundary points due to the so-called boundary effect. Also, observing Fig. \ref{fig:heat_3D_plotting}, the subfigure in the $3^{rd}$ column of the $2^{nd}$ row shows that the TFC distribution of the SCT method almost collapses as the chirp rate varies rapidly. By contrast, the result in the lower-right subfigure shows that, when using the proposed HSCT method, the obtained TFC distribution is well connected everywhere.
\begin{figure}[ht]
    \centering
    \includegraphics[trim = 100 0 100 0, width=.95\linewidth]{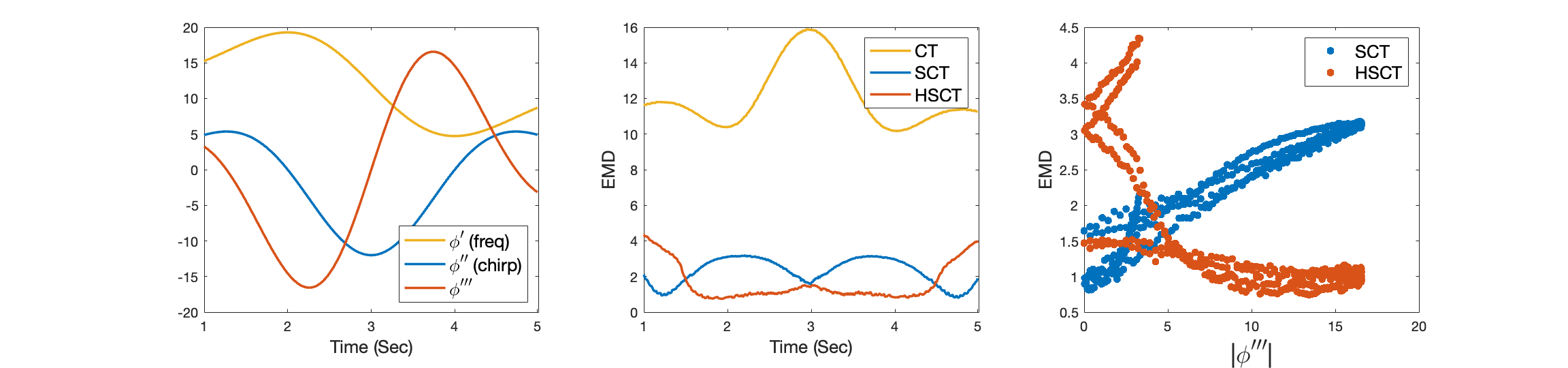}
    \caption{(Left) The numerical value of the derivatives of the phase function from the first to the third orders. (Middle) The EMD of the results given by the CT, the SCT, and the proposed method. (Right) The scattering plot of $|\phi'''(t)|$--EMD.}
    \label{scatter}
\end{figure}
\begin{figure}[h]
\centering
\includegraphics[trim = 250 100 250 100, width=1\linewidth]{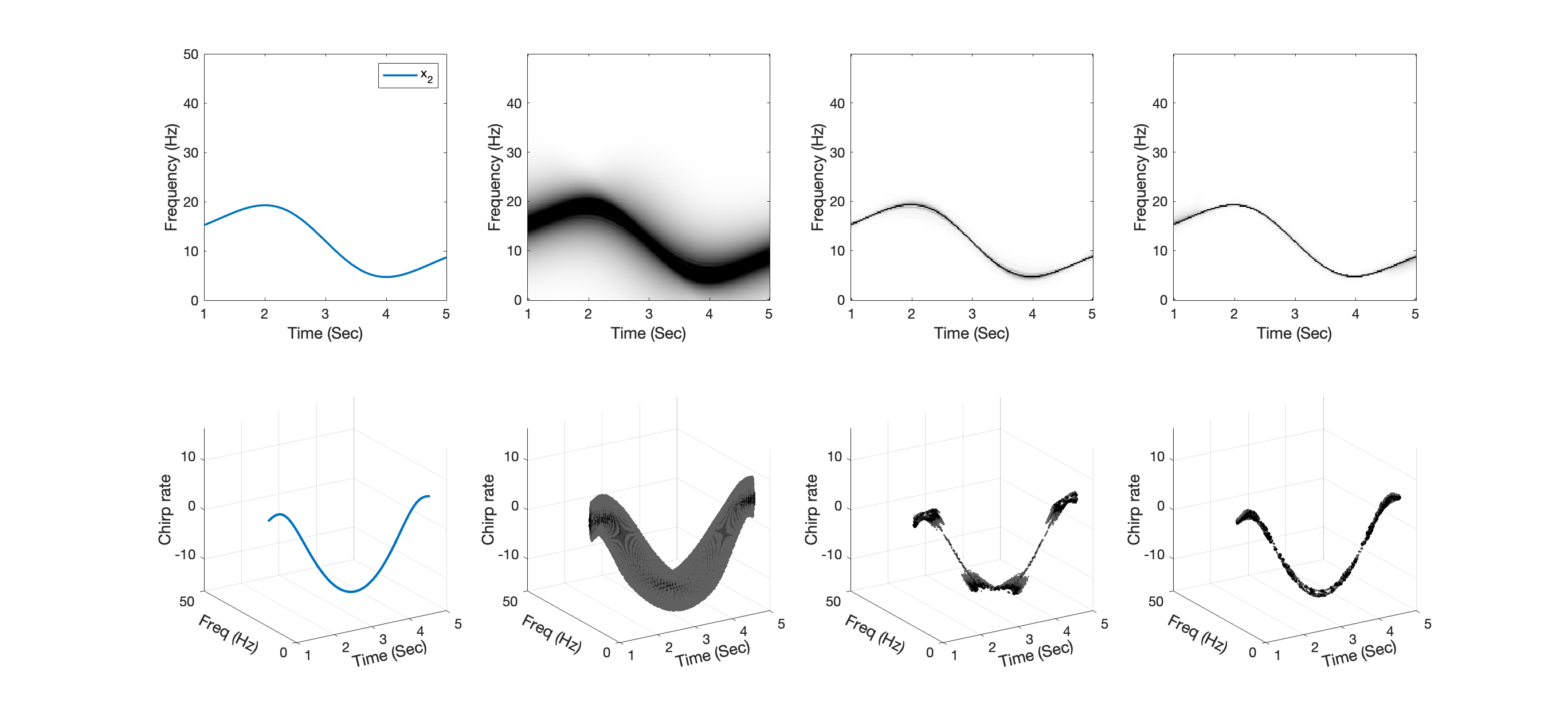}
\caption{(Top row) The visualization of the time-frequency (TF) representation for $x_2$. From left to right: the ideal TF representation, the result of the CT, the result of the original SCT, and the result of the proposed HSCT. (Bottom row) The $3$-dimensional visualization of the time-frequency-chirp (TFC) representation of $x_2$. From left to right: the ideal TFC representation, the result of the CT, the result of the original SCT, and the result of the proposed method.}
\label{fig:heat_2D_plotting}
\label{fig:heat_3D_plotting}
\end{figure}
All of the above experimental results indicate that the variation of the chirp rate is strongly related to the performance of the SCT but the proposed method can overcome the limitation.
\subsection{Separating Signals}
The main motivation to develop the synchrosqueezing transform based on the chirplet transform in \cite{chen2023disentangling} was to weaken the necessary condition of separating the signals, and it would be an important issue of how the proposed method works when dealing with such problems.

We recall the first example we presented in Section \ref{degenerate}, 
$$\begin{aligned}
        & x_{11}=\exp \left(i 2 \pi\left(4 t^2\right)\right)  \\
        & x_{12}=\exp \left(i 2 \pi\left(-\pi t^2+(24+6 \pi) t\right)\right)  \\
        & x_{1}=x_{11}+x_{12} \,,
\end{aligned}$$
the superposition of a pair of pure chirp signals.
\begin{figure}[ht]
    \includegraphics[trim = 100 0 100 0, width=.95\linewidth]{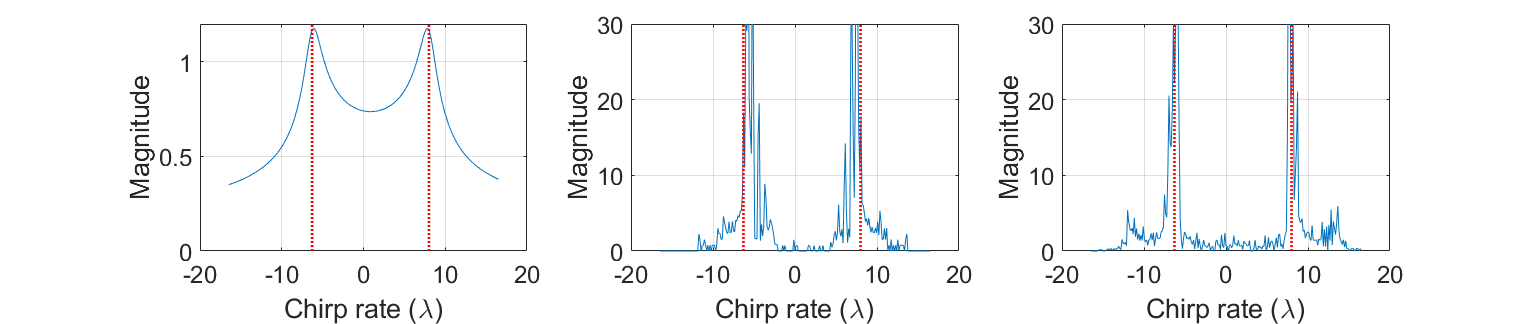}
    \caption{Magnitude of the analyzed chirp rate at $t = 3$ and $\xi = 24$ of $x_1$. The red dotted lines indicate the accurate positions for the impulses. (Left) The result of the CT. (Middle) The result of the SCT. (Right) The result of the proposed method. }
    \label{linear_CR-mag}
\end{figure}
In Fig. \ref{linear_CR-mag}, the chirp-rate analysis result of this synthetic example is shown. One can see that, when using the proposed method, the magnitude of the chirp rate of the two components is well-separated, which shows the proposed method works well in this case.

Next, we explore the difference between the two methods in more complex examples. Consider the synthetic signal defined as the superposition of a chirp function and a signal with a sine curve phase function as follows. 
$$\begin{aligned}
        & x_{31}=\exp \left(i 2 \pi\left(8 (t-2.2)^2\right)\right),  \\
        & x_{32}=\exp \left(i 2 \pi\left(-2 \cos \left(\frac{2}{3} \pi (t-2)\right)+13 (t-2)\right)\right), \\
        & x_{3}=x_{31}+x_{32}\,.
    \end{aligned}$$
The instantaneous frequency for $x_{31}$ and $x_{32}$ are $ f_{31}=16 (t-2.2)$ and $f_{32}=\frac{4}{3} \pi \sin \left(\frac{2}{3} \pi (t-2)\right)+13$, respectively, and the intersection of them is at about $t \approx 3.176$ and $\xi \approx 15.67$.

Some differences arise in this example between the methods of the SCT and the proposed HSCT. As shown in Fig. \ref{sine_CR-mag}, the magnitude of the analyzed chirp rates of the two methods are different. The correct chirp rates of this example at $t = 3.176$ and $\xi = 15.67$ should be two impulses at about $16$ and $-6.83$. The former is contributed from $x_{31}$ and the latter is from $x_{32}$. From Fig. \ref{sine_CR-mag}, the middle subfigure shows the result given by the SCT. The lower order of the SCT leads to the bias of the estimation result, which consequently makes the squeezing process wrong. By comparison, from the right subfigure, one can see that the proposed method gives a quite reliable estimation result of the chirp rate.

\begin{figure}[ht]
\centering
\includegraphics[trim = 100 0 100 0, width=.95\linewidth]{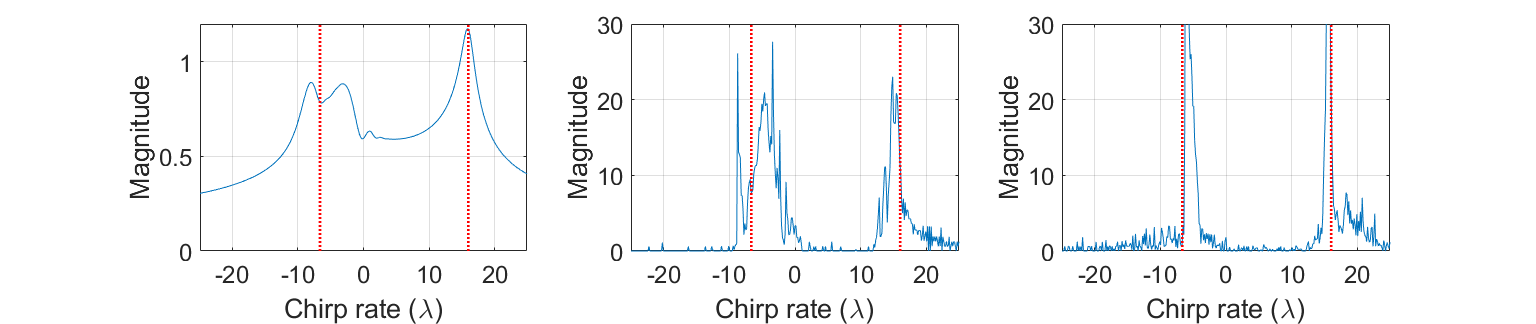}
\caption{Magnitude of the analyzed chirp rate at $t = 3.176$ and $\xi = 15.67$ of $x_3$. The red dotted lines indicate the accurate positions for the impulses. (Left) The result of the CT. (Middle) The result of the SCT. (Right) The result of the proposed HSCT method.}
\label{sine_CR-mag}
\end{figure}
\begin{figure}[ht]
\centering
\includegraphics[trim = 250 100 250 100, width=1\linewidth]{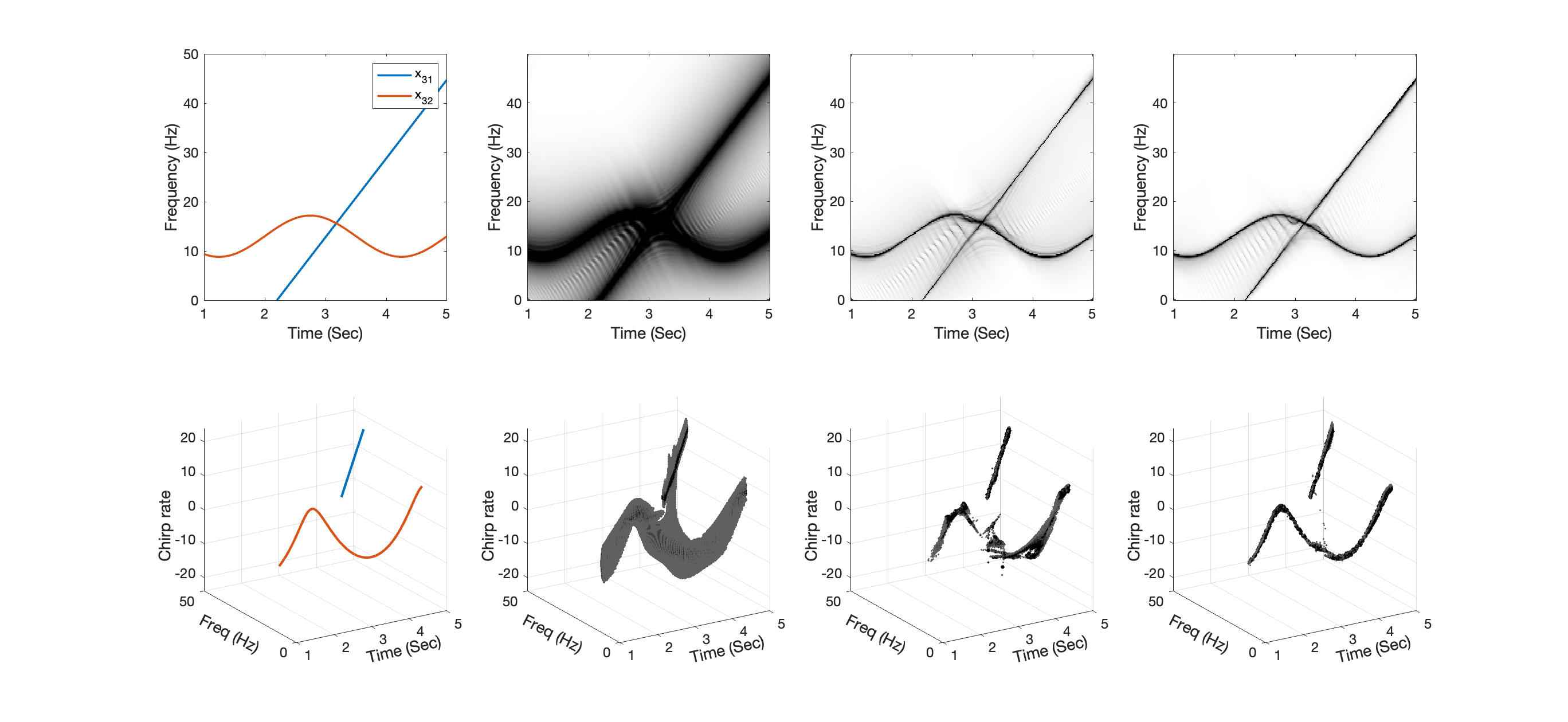}
\caption{(Top row): Visualizations of the TF representations for $x_3$. From left to right: the ideal TF representation, the results of the CT, the original SCT, and the proposed HSCT, respectively. (Bottom row): Three dimensional visualizations of the TFC representations of $x_3$. From left to right: the ideal TFC representation, the results of the CT, the original SCT, and the proposed HSCT, respectively.}
\label{fig:sine_2D_plotting}
\label{fig:sine_3D_plotting}
\end{figure}
From the $3^{rd}$ column in the bottom row of Fig. \ref{fig:sine_2D_plotting}, one can see that the TFC representation determined by the SCT contains more side lobes and the sinusoidal part of the TFC representation is blurred. One can imagine that the blurring of the sinusoidal part is due to the fast variation of the chirp rate, which reduces the applicability of the SCT. In contrast, from the $4^{th}$ column of Fig. \ref{fig:sine_2D_plotting}, one can see that the proposed method not only gives a clear string wave graph but also well separates the two modes in the TFC space. According to Table \ref{tab:sine}, the proposed HSCT method has a lower Rényi entropy and lower TF and TFC EMDs, which means that the proposed method has better energy concentration.
\begin{table}[H]
\caption{Evaluation for the signal $x_3$ case using different metrics.  Comparison between our method and the previous method.}
\begin{center}
\begin{tabular}
{
>{\centering\arraybackslash}m{2.2cm} ||
>{\centering\arraybackslash}m{2.9cm}
>{\centering\arraybackslash}m{2.6cm}
>{\centering\arraybackslash}m{2.6cm}
>{\centering\arraybackslash}m{2.6cm}
}
\noalign{\hrule height 1.5pt}
Method  & Rényi Entropy & TF EMD & TFC EMD  \\
\noalign{\hrule height 1.5pt}
CT  &  24.1379 & 4.4416 &14.3708\\
SCT  & 18.4175 & 2.9218 &11.1113\\
Proposed  & 17.7811 & 1.7455 &7.5175 \\
\noalign{\hrule height 1.5pt}
\end{tabular}
\end{center}
\label{tab:sine}
\end{table}
\subsection{Degenerated into the SCT.}\label{degenerate}
As mentioned in Section 3, the proposed method adds the parameter $\theta^{(g)}_f$ to adjust the estimation result of the phase function. Its purpose is to include the information of the $3^{rd}$ derivative of the phase function in the synchrosqueezing transform. However, when the target signal is a pure chirp or the composition of pure chirps, the extra adjustment becomes a redundancy. 

We now consider the case where the signal is a composition of a pair of pure chirps. The input signal is as follows 
$$\begin{aligned}
        & x_{11}=\exp \left(i 2 \pi\left(4 t^2\right)\right)  \\
        & x_{12}=\exp \left(i 2 \pi\left(-\pi t^2+(24+6 \pi) t\right)\right)  \\
        & x_1=x_{11}+x_{12}
\end{aligned}$$
where the instantaneous frequency for $x_{11}$ and $x_{12}$ are $ f_{11}=8 t$ and $f_{12}=-2 \pi t+(24+6 \pi)$, respectively.

\begin{table}[h]
\caption{Evaluation for the signal $x_1$ using different metrics.  Comparison between our method and the previous method.}
\begin{center}
\begin{tabular}
{
>{\centering\arraybackslash}m{2.2cm} ||
>{\centering\arraybackslash}m{2.9cm}
>{\centering\arraybackslash}m{2.6cm}
>{\centering\arraybackslash}m{2.6cm}
>{\centering\arraybackslash}m{2.6cm}
}
\noalign{\hrule height 1.5pt}
Method  & Rényi Entropy & TF EMD & TFC EMD  \\
\noalign{\hrule height 1.5pt}
CT  & 23.5241 & 3.3526 & 10.1349 \\
SCT  & 15.0726 & 1.1402 & 3.3042 \\
proposed  & 16.0366 & 0.7263 & 3.1288\\
\noalign{\hrule height 1.5pt}
\end{tabular}
\end{center}
\label{tab:quad}
\end{table}
Derived from Section 3, $\theta^{(g)}_f(t,\xi,\lambda)$ represents the magnitude of the $3^{rd}$ derivative of the phase function at time $t$ and frequency $\xi$. 
This leads to the conclusion that the impact of $\theta^{(g)}_f$ is minimal in this case, aligning well with the characteristics of the given signals.  Also, as seen from Table \ref{tab:quad}, the SCT method outperforms other methods in energy concentration based on the winning Rényi entropy, despite similar scores in the two kinds of the EMD. Their TF and TFC representations are shown in Fig. \ref{fig:linear_2D_plotting} in the appendix.
\subsection{More than Two Modes.}
For the last example, we explore the performance when the input is a superposition of more than two modes. The following three-mode signal $x_4$ consists of one linear chirp and two signals with cubic polynomial phase functions.
    $$
\begin{aligned}
    & x_{41}=\exp \left(j 2 \pi\left(\frac{5}{6}t^3-\frac{15}{2}t^2+40t+180\right)\right),  \\
    &x_{42} = \exp \left(j 2 \pi\left(-\frac{5}{6}t^3+\frac{15}{2}t^2+\frac{35}{2}t+\frac{5}{6}\right)\right),\\
    &x_{43} = \exp \left(j 2 \pi\left(-\frac{7}{2}t^2+(26+6 \pi) t-6\pi-\frac{45}{2}\right)\right),  \\
    &x_{4} = x_{41}+x_{42}+x_{43}\,,
\end{aligned}
   $$
where the instantaneous frequency for $x_{41}$, $x_{42}$, and $x_{43}$ are $ f_{41} = \frac{5}{2}t^2-15t+40$, $f_{42}=-\frac{5}{2}t^2+15t+\frac{35}{2}$, and $f_{43}=-7t+26+6\pi$, respectively. The numerical results in Table \ref{tab:three} show that the proposed HSCT also works well when the number of modes are more than two. The visualization result can be seen from Fig. \ref{fig:three_2D_plotting} in the appendix.
   
   \begin{table}[H]
\caption{Evaluation for the signal $x_4$ using different metrics.  Comparison between our method and the previous method.}
\begin{center}
\begin{tabular}
{
>{\centering\arraybackslash}m{2.2cm} ||
>{\centering\arraybackslash}m{2.9cm}
>{\centering\arraybackslash}m{2.6cm}
>{\centering\arraybackslash}m{2.6cm}
>{\centering\arraybackslash}m{2.6cm}
}
\noalign{\hrule height 1.5pt}
Method  & Rényi Entropy & TF EMD & TFC EMD  \\
\noalign{\hrule height 1.5pt}
CT  & 23.7805& 2.8182 & 8.5212\\
SCT  & 20.3239 & 1.9534 & 6.7796\\
Proposed  & 18.1902 & 1.3703 & 5.1615\\
\noalign{\hrule height 1.5pt}
\end{tabular}
\end{center}
\label{tab:three}
\end{table}

\section{Conclusion}
In this paper, the HSCT, which is to extend the synchrosqueezed chirplet transform into a higher-order version, was proposed. It enables us to well analyze the signals with the fast variation of the chirp rate. Experimental results also show that the proposed HSCT has better performance than existing methods. In future work, we will further extend the proposed idea. Note that the proposed new operator is independent of the kernel of the original integral operator. For example, the proposed operator is also valid for the synchrosqueezed wavelet transform and we can apply a similar method to extend the synchrosqueezed wavelet transform into the higher-order version.
\clearpage

\section*{Acknowledgments}
We would like to thank Prof. Hau-Tieng Wu for suggesting this topic and fruitful discussion.

\appendix
\section{Metrics for Implement}

\subsection{Rényi Entropy}
To evaluate a time-frequency representation(TFR), it is important to see if the “energy" is sufficiently concentrated. That is, we expect the model to get closer to the ideal TFR. Based on \cite{stankovic2001measure}, we introduce some metrics to measure the concentration.
\label{sec:sample:appendix}
We adopt the Rényi entropy as the metric. Given a positive real number $\alpha \neq 1$ and a random variable $X$, defined on a probability space $(\Omega, \Sigma, \mathbb{P})$ and valued on a finite state space $\mathcal{S} = \{x_{1}, x_{2}, \cdots ,x_n\}$, one can define its Rényi entropy of order $\alpha $  by 
\begin{equation}
H_{\alpha}(X) = \dfrac{1}{1-\alpha}\log \left(\sum_{i=1}^{n}p_i^{\alpha}\right)
\end{equation}

where the $p_i = \mathbb{P}(X= x_i)$ for $i = 1,2, \cdots ,n$.

This index is applied to measure the concentration of a distribution, which has a lower bound $0$ when the distribution of $X$ is a unit impulse and reaches its upper bound $\log n$ when $X$ has a uniform distribution. We consider the normalized amplitude of TFR to be a probability mass function. Then
$$
p_i = \mathbb{P}(X= x_i) \sim \dfrac{|TFR_f(t_0, \xi_0)|}{\left(\sum_t\sum_\xi |TFR_f(t, \xi)|\right)}
$$
\begin{equation}
    \begin{aligned}
        H_{\alpha}(X)\sim(1-\alpha)^{-1}\log \sum_t\sum_\xi\left(\dfrac{|TFR_f(t_0, \xi_0)|}{\left(\sum_t\sum_\xi |TFR_f(t, \xi)|\right)}\right)^{\alpha}
    \end{aligned}
\end{equation}

which is called the Rényi Entropy of the distribution $TFR_f$.
\subsection{Earth mover's distance}
The Earth Mover's Distance (EMD) provided by \cite{rubner2000} is a cross-bin distance that addresses the image or histogram alignment problem. The common approach is determined through the Earth mover's distance (EMD), a technique previously employed in the context of synchrosqueezing as documented in \cite{daubechies2016} and \cite{pham2017high}. Furthermore, EMD was improved to Fast EMD with thresholded ground distances in \cite{pele2008} and \cite{pele2009}. As ``synchrosqueezing'' operates on the principle of ``reassigning'' content within the time-frequency plane while keeping the time variable fixed, we calculate the EMD for each specific time variable and then take the average over all $t$. It is noteworthy that in the three-dimensional time-frequency-chirp representation, as we fix a specific time, there are still two variables, namely frequency and chirp. Therefore, the units for calculating EMD are in real frequency, not normalized frequency.
\section{Figures}
\begin{figure}[H]
\centering
\includegraphics[trim = 250 100 250 100, width=1\linewidth]{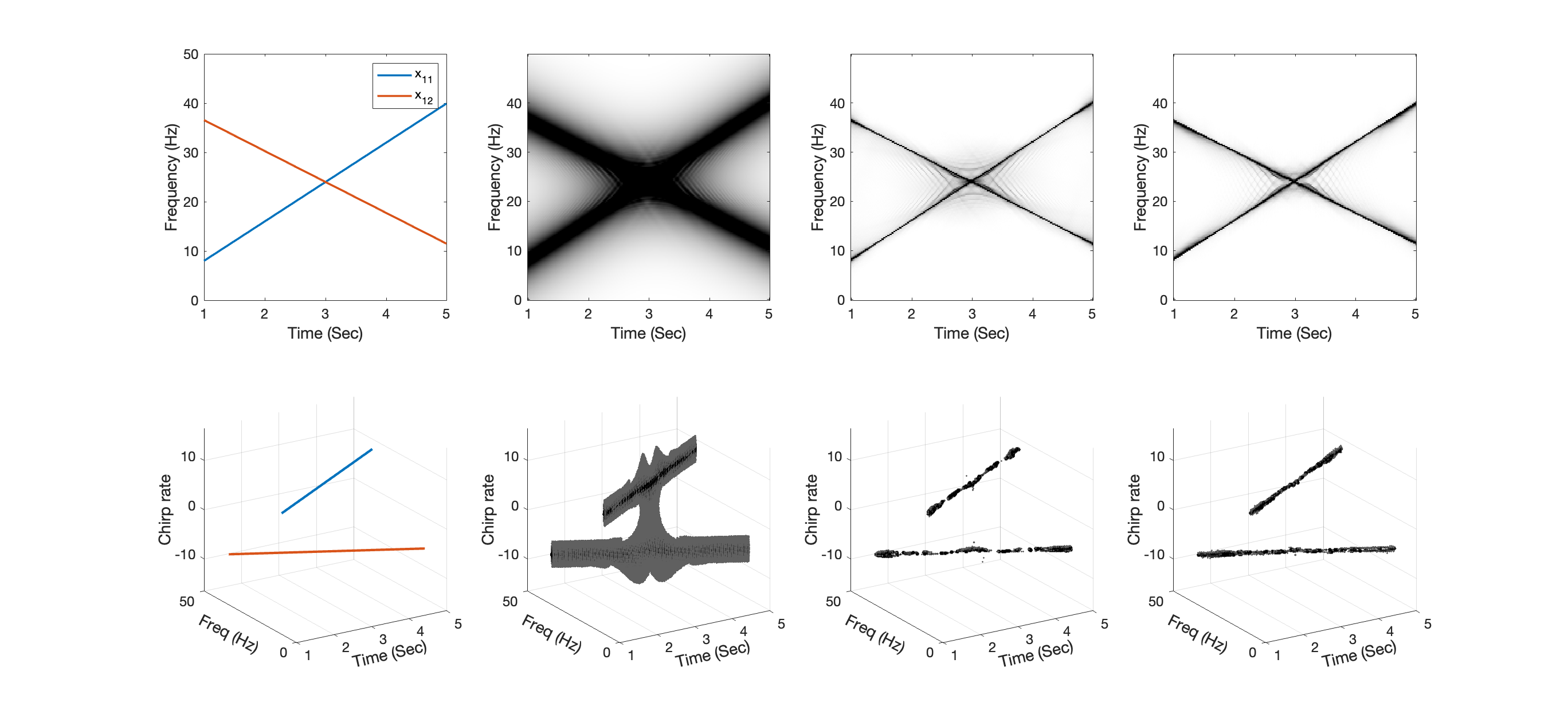}
\caption{(Top row): Visualizations of the TF representations for $x_1$. From left to right: the ideal TF representation, the results of the CT, the original SCT, and the proposed HSCT, respectively. (Bottom row): Three-dimensional visualizations of the TFC representations of $x_1$. From left to right: the ideal TFC representation, the results of the CT, the original SCT, and the proposed HSCT, respectively.}
\label{fig:linear_2D_plotting}
\label{fig:linear_3D_plotting}
\end{figure}
\begin{figure}[H]
\centering
\includegraphics[trim = 250 100 250 100, width=1\linewidth]{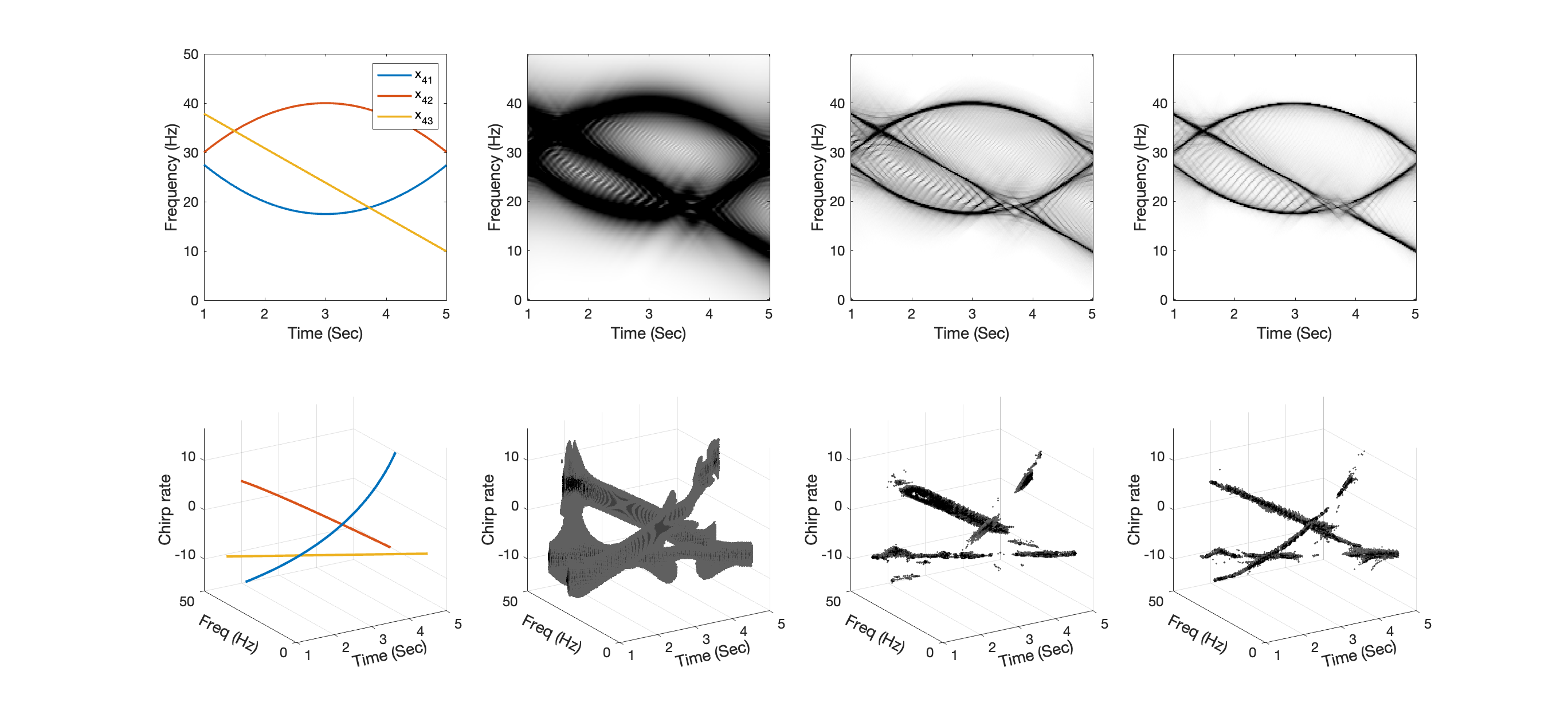}
\caption{(Top row): Visualizations of the TF representations for $x_4$. From left to right: the ideal TF representation, the results of the CT, the original SCT, and the proposed HSCT, respectively. (Bottom row): Three-dimensional visualizations of the TFC representations of $x_4$. From left to right: the ideal TFC representation, the results of the CT, the original SCT, and the proposed HSCT, respectively.}
\label{fig:three_2D_plotting}
\label{fig:three_3D_plotting}
\end{figure}

\section{Proof of Theorem \ref{Main_theorem}}
In this section, we provide the proof of Theorem \ref{Main_theorem} by listing a few lemmas. These lemmas make up the proof of Theorem \ref{Main_theorem}. We will skip the proof if the content is highly similar to the one in the reference.
\begin{lemma}\label{Lemma0}
For any tuple $(t, \xi, \lambda)$ under consideration, there can be at most one $k \in\{1, \ldots, K\}$ such that $\left|\xi-\phi_k^{\prime}(t)\right|+\left|\lambda-\phi_k^{\prime \prime}(t)\right|<\Delta$, i.e. $(t, \xi, \lambda) \in M_k$.
\end{lemma}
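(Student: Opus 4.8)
The plan is to argue by contradiction, using nothing more than the one-dimensional triangle inequality together with the well-separation hypothesis encoded in the definition of the space $\mathcal{Y}_{\epsilon,\Delta}$. Suppose, toward a contradiction, that for the fixed tuple $(t,\xi,\lambda)$ there were two distinct indices $k\neq l$ with $(t,\xi,\lambda)\in M_k\cap M_l$; by the definition of $M_k$ appearing in Theorem \ref{Main_theorem} this means $|\xi-\phi_k'(t)|+|\lambda-\phi_k''(t)|<\Delta$ and simultaneously $|\xi-\phi_l'(t)|+|\lambda-\phi_l''(t)|<\Delta$.

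The next step is to add these two strict inequalities and regroup the four nonnegative terms, pairing the frequency terms and the chirp terms separately. Applying $|\phi_k'(t)-\phi_l'(t)|\le|\phi_k'(t)-\xi|+|\xi-\phi_l'(t)|$ and likewise $|\phi_k''(t)-\phi_l''(t)|\le|\phi_k''(t)-\lambda|+|\lambda-\phi_l''(t)|$ then yields $|\phi_k'(t)-\phi_l'(t)|+|\phi_k''(t)-\phi_l''(t)|<2\Delta$. This contradicts the separation requirement $|\phi_k'(t)-\phi_l'(t)|+|\phi_k''(t)-\phi_l''(t)|\ge 2\Delta$ that holds for all $k\neq l$ and all $t\in\mathbb{R}$ by membership of $f$ in $\mathcal{Y}_{\epsilon,\Delta}$. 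Hence no two such indices can coexist, and at most one $k$ satisfies $(t,\xi,\lambda)\in M_k$.

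Since the whole argument reduces to one application of the triangle inequality, there is no genuine obstacle here; the only point worth stating carefully is the strict/non-strict bookkeeping. The sets $M_k$ are defined with the strict inequality $<\Delta$, so summing two copies produces the \emph{strict} bound $<2\Delta$, which is exactly what conflicts with the non-strict separation bound $\ge 2\Delta$. I would spell this out explicitly so the contradiction is airtight, and I would note that the lemma is purely a statement about the geometry of the phase functions, independent of the window $g$ or of the magnitude conditions on $T_f^{(g)}$, $q_f^0$, and $2+\partial_t(q_f^1/q_f^0)$ imposed elsewhere in Theorem \ref{Main_theorem}.
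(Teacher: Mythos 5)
Your proof is correct and is essentially the same argument as the one the paper relies on (it cites Lemma 3 of \cite{chen2023disentangling}, which is exactly this triangle-inequality/contradiction argument against the separation condition $|\phi_k'(t)-\phi_l'(t)|+|\phi_k''(t)-\phi_l''(t)|\ge 2\Delta$). Your remark about the strict-versus-non-strict bookkeeping, and the observation that the lemma is independent of the window and the magnitude thresholds, are both accurate.
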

The proof of this lemma has been established in Lemma 3 in \cite{chen2023disentangling}.
\begin{lemma}\label{Lemma_first}
If $(t, \xi, \lambda) \notin M_{k}$, then
$$
\left|T_{f_{k}}^{\left(t^{n} g\right)}(t, \xi, \lambda)\right| \leq \epsilon E_{k, n}(t),
$$
where $E_{k, n}(t):=\left\|\phi_{k}^{\prime}\right\|_{L^{\infty}} I_{n+1}+\left(D_{n}+\frac{\pi}{12}\left\|\phi_{k}^{\prime}\right\|_{L^{\infty}} I_{n+4}\right) A_{k}(t)$. 
\end{lemma}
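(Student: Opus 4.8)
Fix $k$ and a tuple $(t,\xi,\lambda)\notin M_k$, and abbreviate $\xi':=\xi-\phi_k'(t)$, $\lambda':=\lambda-\phi_k''(t)$, so that $|\xi'|+|\lambda'|\ge\Delta$. The plan is to substitute $u=x-t$ in the defining integral of $T_{f_k}^{(t^ng)}$, Taylor-expand $\phi_k$ about $t$ to third order, and recognize the leading part as a $\mathcal{C}$-transform. Writing
\[
\phi_k(t+u)=\phi_k(t)+\phi_k'(t)u+\tfrac12\phi_k''(t)u^2+\tfrac16\phi_k'''(t)u^3+R_k(t,u),\qquad |R_k(t,u)|\le\tfrac{1}{24}\|\phi_k^{(4)}\|_{L^\infty}|u|^4\le\tfrac{\epsilon}{24}\|\phi_k'\|_{L^\infty}|u|^4,
\]
and pulling out the unimodular factor $e^{2\pi i\phi_k(t)}$ together with the quadratic chirp phase, one obtains
\[
T_{f_k}^{(t^ng)}(t,\xi,\lambda)=e^{2\pi i\phi_k(t)}\int_{\mathbb{R}}A_k(t+u)\,e^{2\pi iR_k(t,u)}\;u^ng^*(u)e^{\frac{i\pi}{3}\phi_k'''(t)u^3}\;e^{-2\pi i\xi'u-i\pi\lambda'u^2}\,du .
\]
The point of this organization is that the cubic phase term $e^{\frac{i\pi}{3}\phi_k'''(t)u^3}$ stays glued to the window, so the main part of the integral is exactly $\mathcal{C}\!\big(x^ng^*(x)e^{\frac{i\pi}{3}\phi_k'''(t)x^3}\big)(\xi',\lambda')$, which the hypothesis controls (for the even windows considered, $g^*=g$).

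Next I would split $A_k(t+u)e^{2\pi iR_k(t,u)}=A_k(t)+\big(A_k(t+u)-A_k(t)\big)+A_k(t+u)\big(e^{2\pi iR_k(t,u)}-1\big)$, producing three integrals $J_1,J_2,J_3$. For $J_1=A_k(t)\,\mathcal{C}\!\big(x^ng^*(x)e^{\frac{i\pi}{3}\phi_k'''(t)x^3}\big)(\xi',\lambda')$, the assumed decay and $|\xi'|+|\lambda'|\ge\Delta$ give $|J_1|\le A_k(t)\cdot\tfrac{\sqrt{\Delta}\,D_n\epsilon}{\sqrt{|\xi'|+|\lambda'|}}\le\epsilon D_nA_k(t)$ — this is precisely why the hypothesis carries the $\sqrt{\Delta}$. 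For $J_2$, the mean value theorem and $|A_k'|\le\epsilon\|\phi_k'\|_{L^\infty}$ give $|A_k(t+u)-A_k(t)|\le\epsilon\|\phi_k'\|_{L^\infty}|u|$, hence $|J_2|\le\epsilon\|\phi_k'\|_{L^\infty}\int_{\mathbb{R}}|u|^{n+1}|g(u)|\,du=\epsilon\|\phi_k'\|_{L^\infty}I_{n+1}$, with $I_m:=\int_{\mathbb{R}}|x|^m|g(x)|\,dx<\infty$ since $g\in\mathcal{S}(\mathbb{R})$. For $J_3$, $|e^{2\pi iR_k(t,u)}-1|\le2\pi|R_k(t,u)|\le\tfrac{\pi}{12}\epsilon\|\phi_k'\|_{L^\infty}|u|^4$ together with $A_k(t+u)\le A_k(t)+\epsilon\|\phi_k'\|_{L^\infty}|u|$ gives $|J_3|\le\tfrac{\pi}{12}\epsilon\|\phi_k'\|_{L^\infty}A_k(t)I_{n+4}$ up to a term of order $\epsilon^2$, which is absorbed once $\epsilon$ is small. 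Adding the three estimates yields $|T_{f_k}^{(t^ng)}(t,\xi,\lambda)|\le\epsilon\big(\|\phi_k'\|_{L^\infty}I_{n+1}+(D_n+\tfrac{\pi}{12}\|\phi_k'\|_{L^\infty}I_{n+4})A_k(t)\big)=\epsilon E_{k,n}(t)$.

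The change of variables, the Taylor remainder bound, and the triangle-inequality bookkeeping are routine and parallel the second-order estimate in \cite{chen2023disentangling}. The one genuinely load-bearing step is the organization of the expansion: unlike in the $\epsilon$-ICT analysis, where the third-order phase coefficient is treated as an error, here $\phi_k'''(t)$ must be folded into the window so that the tailored decay hypothesis on $\mathcal{C}(x^ng(x)e^{\frac{i\pi}{3}\phi_k'''(t)x^3})$ can be applied; a secondary technical nuisance is replacing $A_k(t+u)$ by $A_k(t)$ in $J_3$ at the cost of only a higher-order term.
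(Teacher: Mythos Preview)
Your proof is essentially the paper's argument: the same three-term split into an amplitude-variation piece, a cubic-phase main term controlled by the $\mathcal{C}$-decay hypothesis at $(\xi',\lambda')$, and a fourth-order phase remainder bounded via $|e^{2\pi iR}-1|\le2\pi|R|$. The only cosmetic difference is that the paper organizes the split as $A_k(t+u)e^{2\pi iR}=A_k(t)+(A_k(t+u)-A_k(t))e^{2\pi iR}+A_k(t)(e^{2\pi iR}-1)$, i.e.\ with $A_k(t)$ rather than $A_k(t+u)$ in the third piece, which delivers the constant $E_{k,n}(t)$ exactly and spares you the $O(\epsilon^2)$ residual you flag.
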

\begin{proof}
First, we do the decomposition to $f_k(x)$ to evaluate the difference to it own at the time $t$
\begin{align*}
    f_{k}(x)=& A_{k}(x) e^{2 \pi i \phi_{k}(x)} \\
    =&\left(A_{k}(x)-A_{k}(t)\right) e^{2 \pi i \phi_{k}(x)}\\
    &+A_{k}(t) e^{2 \pi i\left[\phi_{k}(t)+\phi_{k}^{\prime}(t)(x-t)+\frac{1}{2} \phi_{k}^{\prime\prime}(t)(x-t)^{2}+\frac{1}{6}\phi_k^{\prime\prime \prime}(t)(x-t)^3\right]} \\
    &+A_{k}(t)\left(e^{2 \pi i \phi_{k}(x)}-e^{2 \pi i\left[\phi_{k}(t)+\phi_{k}^{\prime}(t)(x-t)+\frac{1}{2} \phi_{k}^{\prime\prime}(t)(x-t)^{2}+\frac{1}{6}\phi_k^{\prime\prime \prime}(t)(x-t)^3\right]}\right)\,.
\end{align*}
Denote these three terms in the sum by $f_{k,1}, f_{k,2}, f_{k,3}$ respectively and
$$ I_n = \int_\mathbb{R} |x-t|^n | g(x-t)|dx.$$
Then we have
\begin{align*}
    \left|T_{f_{k, 1}}^{\left(t^{n} g\right)}(t, \xi, \lambda)\right| 
    & \leq \int\left|A_{k}(x)-A_{k}(t)\right||x-t|^{n}|g(x-t)| \mathrm{d} x \\
    & \leq \left\|A_{k}^{\prime}\right\|_{L^{\infty}} \int|x-t|^{n+1}|g(x-t)| \mathrm{d} x \\
    & \leq \left\|A_{k}^{\prime}\right\|_{L^{\infty}} I_{n+1}\leq \epsilon\left\|\phi_k^\prime \right\|_{L^\infty}I_{n+1}\text{.}
\end{align*}
If $(t, \xi, \lambda) \notin M_{k}$, then
\begin{align*}
    &\left|T_{f_{k, 2}}^{\left(t^{n} g\right)}(t, \xi, \lambda)\right|\\
    &=\left|\int_{\mathbb{R}}A_{k}(t)e^{2\pi i \phi_k(t)}(x-t)^n g(x-t)\right.\\
    & \hspace{3cm}\times e^{-2\pi i (\xi-\phi_k'(t)) (x-t)-\pi i(\lambda-\phi_k''(t)) (x-t)^2+\frac{1}{3}\pi i\phi_k'''(t) (x-t)^3}dx\bigg| \\
    &=A_{k}(t)\left|\mathcal{C}\left(x^{{n}} g(x) e^{\frac{1}{3}\pi i \phi'''(t)x^3}\right)\left(\xi-\phi_{k}^{\prime}(t), \lambda-\phi_{k}^{\prime \prime}(t)\right)\right| \\
    & \leq A_{k}(t) D_{n} \epsilon\,\text{,}    
\end{align*}
providing $$\left|\mathcal{C}\left(x^{{n}} g(x) e^{\frac{1}{3}\pi i \phi_k'''(t)x^3}\right)(\xi, \eta)\right|\leq \frac{\sqrt{\Delta}D_n\epsilon}{\sqrt{|\xi|+|\eta|}}$$ where $\mathcal{C}(f(x))(\xi, \lambda):=\int_{\mathbb{R}}f(x)e^{-2\pi i \xi x-\pi i \lambda x^2}dx$. For the last term, we have
\begin{align*}
    &\quad\left|T_{f_{k, 3}}^{\left(t^{n} g\right)}(t, \xi, \lambda)\right|\\ 
    & \leq A_{k}(t) \int 2\pi \left|\phi_{k}(x)-\sum_{p=0}^3\frac{1}{p!}\phi_{k}^{(p)}(t)(x-t)^p\right||x-t|^{n}|g(x-t)| \mathrm{d} x \\
    & \leq A_{k}(t) \int \frac{2\pi }{4!}\left\|\phi_{k}^{(4)}\right\|_{L^{\infty}}|x-t|^{n+4}|g(x-t)| \mathrm{d} x \\
    & \leq \frac{\pi}{12} A_{k}(t)\left\|\phi_{k}^{(4)}\right\|_{L^{\infty}} I_{n+4}
    \leq\frac{\epsilon\pi}{12}A_k(t)\left\|\phi_k^\prime \right\|_{L^\infty}I_{n+4}\,\text{.}
\end{align*}
Consequently, 
$$\left|T_{f}^{\left(t^{n} g\right)}(t, \xi, \lambda)-T_{f_{k}}^{\left(t^{n} g\right)}(t, \xi, \lambda)\right| \leq \epsilon\sum_{l\neq k} E_{l, n}(t)\text{.}$$
\end{proof}
\begin{lemma}\label{Lemma_D1}
$\text { If }(t, \xi, \lambda) \in M_{k} \text { for some } k \text{ where } 1 \leq k \leq K, \text { then }$
$$\left|\partial_{t} T_{f}^{(g)}-2 \pi i\left(\phi_{k}^{\prime}(t) T_{f}^{(g)}+\phi_{k}^{\prime \prime}(t) T_{f}^{(t g)}+\frac{\phi'''_k(t)}{2}T^{(t^2g)}_f\right)\right| \leq  \epsilon B_{k, 1}(t)$$
where
\begin{align*}
B_{k,1}(t) &= \left(\sum_{k=1}^{N}\left\|\phi_{k}^{\prime}\right\|_{L^{\infty}} I_{0}+\frac{\pi}{3}\left\|\phi'_{k}\right\|_{L^{\infty}}\left\|A_{k}\right\|_{L^{\infty}} I_{3}\right.\\
&\quad+2 \pi \sum_{l \neq k}\left(\phi_{l}^{\prime}(t) E_{l, 0}(t)+\left|\phi_{l}^{\prime \prime}(t)\right| E_{l, 1}(t)+\frac{1}{2}|\phi'''_l(t)|E_{l,2}(t)\right) \\
&\quad\left.+2 \pi\left(\phi_{k}^{\prime}(t) \sum_{l \neq k} E_{l, 0}(t)+\left|\phi_{k}^{\prime \prime}(t)\right| \sum_{l \neq k} E_{l, 1}(t)+\frac{|\phi'''_k(t)|}{2}\sum_{l\neq k}E_{l,2}(t)\right)\right)\,.
\end{align*}
\end{lemma}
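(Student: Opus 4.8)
\textbf{Proof plan for Lemma \ref{Lemma_D1}.}
The plan is to start from the exact identity $\partial_t T_f^{(g)} = T_{f'}^{(g)}$ and decompose $f'$ mode by mode. For each mode $f_k = A_k e^{2\pi i \phi_k}$ we have $f_k' = 2\pi i \phi_k' f_k + A_k' e^{2\pi i\phi_k}$, and the key point is to replace the factor $\phi_k'(x)$ inside the integral defining $\partial_t T_{f_k}^{(g)}$ by its degree-$2$ Taylor polynomial at $t$, namely $\phi_k'(t) + \phi_k''(t)(x-t) + \tfrac12\phi_k'''(t)(x-t)^2$. This is where the chirplet kernel matters: the three Taylor terms of $\phi_k'$ paired against $g(x-t)$, $(x-t)g(x-t)$, $(x-t)^2 g(x-t)$ produce exactly $2\pi i(\phi_k'(t) T_{f_k}^{(g)} + \phi_k''(t) T_{f_k}^{(tg)} + \tfrac{\phi_k'''(t)}{2} T_{f_k}^{(t^2 g)})$, so the error splits into three pieces: (i) the amplitude-derivative term $A_k'$, (ii) the Taylor remainder of $\phi_k'$, controlled by $\|\phi_k^{(4)}\|_{L^\infty}$ and hence by $\epsilon\|\phi_k'\|_{L^\infty}$, and (iii) for the \emph{off-diagonal} modes $l\ne k$, the full quantities $T_{f_l}^{(t^n g)}(t,\xi,\lambda)$, which are small by Lemma \ref{Lemma_first} since $(t,\xi,\lambda)\notin M_l$ when $(t,\xi,\lambda)\in M_k$ (this uses Lemma \ref{Lemma0}).

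Concretely, I would write
$$
\partial_t T_f^{(g)} - 2\pi i\Big(\phi_k'(t) T_f^{(g)} + \phi_k''(t) T_f^{(tg)} + \tfrac{\phi_k'''(t)}{2} T_f^{(t^2 g)}\Big)
$$
as a sum over all modes of $\partial_t T_{f_l}^{(g)} - 2\pi i(\phi_k'(t) T_{f_l}^{(g)} + \cdots)$. For $l=k$, bound $|A_k'|\le \epsilon\|\phi_k'\|_{L^\infty}$ and estimate $\int|A_k'(x)||g(x-t)|\,dx \le \epsilon\|\phi_k'\|_{L^\infty} I_0$; for the Taylor remainder of $\phi_k'$ use $|\phi_k'(x) - \sum_{p=0}^2 \tfrac{\phi_k^{(p+1)}(t)}{p!}(x-t)^p| \le \tfrac{1}{3!}\|\phi_k^{(4)}\|_{L^\infty}|x-t|^3$, giving the factor $\tfrac{2\pi}{6}\|\phi_k^{(4)}\|_{L^\infty} I_3 \le \tfrac{\pi}{3}\epsilon\|\phi_k'\|_{L^\infty}\|A_k\|_{L^\infty} I_3$ (using $A_k(x)\le\|A_k\|_{L^\infty}$ and the $\epsilon$-ICBT bound $\|\phi_k^{(4)}\|_{L^\infty}\le\epsilon\|\phi_k'\|_{L^\infty}$; strictly, the $\phi'(t)$ in the ICBT definition should be read as a sup, matching the reference's conventions). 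For each $l\ne k$, bound $|\partial_t T_{f_l}^{(g)}| = |T_{f_l'}^{(g)}| = |2\pi i T_{\phi_l' f_l}^{(g)}| \le 2\pi\phi_l'(t)\ldots$ after a similar Taylor split, plus $2\pi(\phi_l''(t),\phi_l'''(t))$-weighted higher moments; and bound the subtracted term $2\pi i(\phi_k'(t) T_{f_l}^{(g)} + \cdots)$ directly via Lemma \ref{Lemma_first}. Collecting the three groups of terms produces exactly the stated expression for $B_{k,1}(t)$, where I would identify the first line as the diagonal ($l=k$) contribution, the second line (the sum over $l\ne k$ with $E_{l,0},E_{l,1},E_{l,2}$ weighted by $\phi_l',\phi_l'',\phi_l'''$) as the off-diagonal $|\partial_t T_{f_l}^{(g)}|$ contribution, and the third line as the off-diagonal contribution from the subtracted $2\pi i(\phi_k'(t)T_{f_l}^{(g)}+\cdots)$ terms.

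The bookkeeping for the off-diagonal contribution of $\partial_t T_{f_l}^{(g)}$ is the main obstacle: one must Taylor-expand $\phi_l'$ around $t$ inside that integral too, so that the expansion coefficients are $\phi_l'(t),\phi_l''(t),\phi_l'''(t)$ (matching the $E_{l,n}$ weights in the second line of $B_{k,1}$) rather than sup-norms, and then invoke Lemma \ref{Lemma_first} with the right moments $n=0,1,2$ — plus absorb the Taylor remainder of $\phi_l'$ into the $\|\phi_l'\|_{L^\infty}\|A_l\|_{L^\infty} I_3$-type terms hidden in $E_{l,n}$. Everything else is the routine triangle-inequality estimation already used in \cite{chen2023disentangling}; since the computation closely parallels Lemma 5 there, I would present the mode-by-mode decomposition and the three error bounds and then simply state that summing them yields $B_{k,1}(t)$ as written.
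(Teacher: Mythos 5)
Your proposal is correct and follows essentially the same route as the paper: decompose mode by mode via $\partial_t T_f^{(g)}=T_{f'}^{(g)}$, Taylor-expand each $\phi_l'$ at $t$ to order two with the remainder controlled by $\left\|\phi_l^{(4)}\right\|_{L^\infty}\le\epsilon\left\|\phi_l'\right\|_{L^\infty}$, and use Lemma \ref{Lemma0} together with Lemma \ref{Lemma_first} to bound the off-diagonal terms $T_{f_l}^{(t^n g)}$, $l\neq k$, by $\epsilon E_{l,n}(t)$; the only (harmless) difference is bookkeeping, since the paper first approximates all modes with their own Taylor coefficients and then swaps $T_{f_k}^{(t^n g)}$ for $T_f^{(t^n g)}$. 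One small correction: the off-diagonal Taylor-remainder terms $\epsilon\left(\left\|\phi_l'\right\|_{L^\infty}I_0+\frac{\pi}{3}\left\|\phi_l'\right\|_{L^\infty}\left\|A_l\right\|_{L^\infty}I_3\right)$ are not ``absorbed into $E_{l,n}$'' but are exactly what makes the first line of $B_{k,1}(t)$ a sum over all modes rather than just the diagonal one, so the totals agree.
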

\begin{proof}
\begin{align*}
    \partial_{t} T_{f}^{(g)}=& \partial_{t}\left(\sum_{k=1}^{N} \int A_{k}(x) e^{2 \pi i \phi_{k}(x)} g(x-t) e^{-2 \pi i \xi(x-t)-\pi i \lambda(x-t)^{2}} \mathrm{~d} x\right) \\
    =&\sum_{k=1}^{N}  {\left[\int A_{k}^{\prime}(x) e^{2 \pi i \phi_{k}(x)} g(x-t) e^{-2 \pi i \xi(x-t)-\pi i \lambda(x-t)^{2}} \mathrm{~d} x\right.} \\
    &\hspace*{1cm}\left.+\int A_{k}(x) 2 \pi i \phi_{k}^{\prime}(x) e^{2 \pi i \phi_{k}(x)} g(x-t) e^{-2 \pi i \xi(x-t)-\pi i \lambda(x-t)^{2}} \mathrm{~d} x\right] \\
    =& \sum_{k=1}^{N}\left[\int A_{k}^{\prime}(x) e^{2 \pi i \phi_{k}(x)} g(x-t) e^{-2 \pi i \xi(x-t)-\pi i \lambda(x-t)^{2}} \mathrm{~d} x\right.\\
    &\hspace*{1cm}+\int A_{k}(x) 2 \pi i\left[\phi_{k}^{\prime}(t)+\phi_{k}^{\prime \prime}(t)(x-t)+\frac{1}{2}\phi'''_k(t)(x-t)^2\right.\\
    &\hspace*{1cm}\left.\left.+\frac{1}{6}\phi^{(4)}_k(\tau)(x-t)^3\right] 
     e^{2 \pi i \phi_{k}(x)} g(x-t) e^{-2 \pi i \xi(x-t)-\pi i \lambda(x-t)^{2}} \mathrm{~d} x\right]
\end{align*}
\begin{align*}
    &\quad\left|\partial_{t} T_{f}^{(g)}-2 \pi i \sum_{k=1}^{N}\left(\phi_{k}^{\prime}(t) T_{f_{k}}^{(g)}+\phi_{k}^{\prime \prime}(t) T_{f_{k}}^{(t g)}+\frac{\phi'''_k(t)}{2}T^{x^2g}_f\right)\right| \\
    \leq & \sum_{k=1}^{N}\left[\int\left|A_{k}^{\prime}(x) \| g(x-t)\right| \mathrm{d} x+2 \pi \int A_{k}(x)\left|\frac{1}{6}\phi^{(4)}_k(\tau)(x-t)^3\right||g(x-t)| \mathrm{d} x\right] \\
    \leq & \sum_{k=1}^{N}\left(\left\|A_{k}^{\prime}\right\|_{L^{\infty}} I_{0}+\frac{\pi}{3}\left\|\phi_{k}^{(4)}\right\|_{L^{\infty}}\left\|A_{k}\right\|_{L^{\infty}} I_{3}\right)  .
\end{align*}
By Lemma \ref{Lemma0} and \ref{Lemma_first} , when $(t, \xi, \lambda 
) \in M_{k}$, we have
$$
\begin{aligned}
&\quad\left|\partial_{t} T_{f}^{(g)}-2 \pi i\left(\phi_{k}^{\prime}(t) T_{f_{k}}^{(g)}+\phi_{k}^{\prime \prime}(t) T_{f_{k}}^{(t g)}+\frac{\phi'''_k(t)}{2}T^{(t^2g)}_{f_k}\right)\right| \\
& \leq \sum_{k=1}^{N}\left(\left\|A_{k}^{\prime}\right\|_{L^{\infty}} I_{0}+\frac{\pi}{3}\left\|\phi_{k}^{(4)}\right\|_{L^{\infty}}\left\|A_{k}\right\|_{L^{\infty}} I_{3}\right)\\
&\quad+2 \pi \epsilon\sum_{l \neq k}\left(\phi_{l}^{\prime}(t) E_{l, 0}(t)+\left|\phi_{l}^{\prime \prime}(t)\right| E_{l, 1}(t)+\frac{1}{2}|\phi'''_l(t)|E_{l,2}\right)\text{.}
\end{aligned}
$$
Therefore, we have
$$
\begin{aligned}
&\quad\left|\partial_{t} T_{f}^{(g)}-2 \pi i\left(\phi_{k}^{\prime}(t) T_{f}^{(g)}+\phi_{k}^{\prime \prime}(t) T_{f}^{(t g)}+\frac{\phi'''_k(t)}{2}T^{(t^2g)}_{f}\right)\right| \\
&\leq \left(\sum_{k=1}^{N}\left\|A_{k}^{\prime}\right\|_{L^{\infty}} I_{0}+\frac{\pi}{3}\left\|\phi_{k}^{(4)}\right\|_{L^{\infty}}\left\|A_{k}\right\|_{L^{\infty}} I_{3}\right.\\
&\quad+2 \pi \epsilon\sum_{l \neq k}\left(\phi_{l}^{\prime}(t) E_{l, 0}(t)+\left|\phi_{l}^{\prime \prime}(t)\right| E_{l, 1}(t)+\frac{1}{2}|\phi'''_l(t)|E_{l,2}\right) \\
&\left.\quad+2 \pi\epsilon\left(\phi_{k}^{\prime}(t) \sum_{l \neq k} E_{l, 0}(t)+\left|\phi_{k}^{\prime \prime}(t)\right| \sum_{l \neq k} E_{l, 1}(t)+\frac{1}{2}|\phi'''_k(t)|\sum_{l\neq k}E_{l,2}(t)\right)\right)\\
&\leq \epsilon\left(\sum_{k=1}^{N}\left\|\phi_{k}^{\prime}\right\|_{L^{\infty}} I_{0}+\frac{\pi}{3}\left\|\phi'_{k}\right\|_{L^{\infty}}\left\|A_{k}\right\|_{L^{\infty}} I_{3}\right.\\
&\quad+2 \pi \sum_{l \neq k}\left(\phi_{l}^{\prime}(t) E_{l, 0}(t)+\left|\phi_{l}^{\prime \prime}(t)\right| E_{l, 1}(t)+\frac{1}{2}|\phi'''_l(t)|E_{l,2}\right) \\
&\left.\quad+2 \pi\left(\phi_{k}^{\prime}(t) \sum_{l \neq k} E_{l, 0}(t)+\left|\phi_{k}^{\prime \prime}(t)\right| \sum_{l \neq k} E_{l, 1}(t)+\frac{1}{2}|\phi'''_k(t)|\sum_{l\neq k}E_{l,2}(t)\right)\right)\text{.}
\end{aligned}
$$
\end{proof}
\begin{lemma}\label{Lemma_D2}
$\text { If }(t, \xi, \lambda) \in M_{k} \text { for some } k\text{ where } 1 \leq k \leq K, \text { then }$
\begin{align*}
    \left|\partial_{t t}^{2} T_{f}^{(g)}-2 \pi i\bigg(\phi_{k}^{\prime \prime}(t) T_{f}^{(g)}+\phi'''_k(t)T^{(tg)}_f+\phi_{k}^{\prime}(t) \partial_{t} T_{f}^{(g)}+\phi_{k}^{\prime \prime}(t) \partial_{t} T_{f}^{(t g)}\right.\\
    +\left.\left.\frac{1}{2}\phi'''_k(t)\partial_t T^{(t^2g)}_f\right)\right| \leq \epsilon B_{k, 2}(t)
\end{align*}
where
\begin{align*}
    B_{k, 2}(t) &= \Omega+2 \pi \sum_{l \neq k}\bigg(\left|\phi_{l}^{\prime \prime}(t)\right| E_{l, 0}(t)+\left|\phi_{l}^{\prime \prime\prime}(t)\right| E_{l, 1}(t)+\phi_{l}^{\prime}(t) F_{l, 0}(t)\\
    &\quad\hspace{2.5cm}+\left.\left|\phi_{l}^{\prime \prime}(t)\right| F_{l, 1}(t)+\frac{1}{2}\left|\phi'''_l(t)\right|F_{l,2}(t)\right)\\
    &\quad+2 \pi\left(\left|\phi_{k}^{\prime \prime}(t)\right| \sum_{l \neq k} E_{l, 0}(t)+\left|\phi_{k}^{\prime \prime\prime}(t)\right| \sum_{l \neq k} E_{l, 1}(t)+\phi_{k}^{\prime}(t) \sum_{l \neq k} F_{l, 0}(t)\right.\\
    &\hspace{1.5cm}\quad+\left.\left|\phi_{k}^{\prime \prime}(t)\right| \sum_{l \neq k} F_{l, 1}(t)+\frac{1}{2}\left|\phi_{k}^{\prime \prime\prime}(t)\right| \sum_{l \neq k} F_{l, 2}(t)\right)
\end{align*}
with
\begin{align*}
    \Omega &= \sum_{k=1}^{N} \left(\left\|\phi'_{k}\right\|_{L^{\infty}} +2 \pi  \left\|\phi_{k}^{\prime}\right\|^2_{L^{\infty}}\right)I_{0}+\pi \left\|\phi'_{k}\right\|_{L^{\infty}}\left\|A_{k}\right\|_{L^{\infty}} I_{2}\\
    &\quad+\frac{\pi}{3} \left(\left\|A_{k}^{\prime}\right\|_{L^{\infty}}+2 \pi\left\|\phi_{k}^{\prime}\right\|_{L^{\infty}} \left\|A_{k}\right\|_{L^{\infty}}\right) \left\|\phi'_k\right\|_{L^{\infty}} I_{3}\,\text{.} 
\end{align*}
For $n = 1, 2, 3$
\begin{align*}
   F_{k,n} &= \left(\left\|\phi_{k}^{\prime}\right\|_{L^{\infty}} I_{n}+\frac{\pi}{3}\left\|\phi'_{k}\right\|_{L^{\infty}}\left\|A_{k}\right\|_{L^{\infty}} I_{n+3}\right)\\
   &\quad+2\pi \left(\phi'_k(t)E_{k,n}+|\phi''_k(t)|E_{k,n+1}+\frac{1}{2}|\phi'''_k(t)|E_{k,n+2}\right)\,.
\end{align*}
\end{lemma}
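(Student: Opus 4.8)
The plan is to carry the proof of Lemma~\ref{Lemma_D1} one derivative further, the basic tool being the integration-by-parts identity $\partial_{t}T_{h}^{(t^{n}g)}=T_{h'}^{(t^{n}g)}$, which holds because $\partial_{t}$ applied to the chirplet kernel $(x-t)^{n}g(x-t)e^{-2\pi i\xi(x-t)-\pi i\lambda(x-t)^{2}}$ equals $-\partial_{x}$ applied to it, the boundary terms vanishing since the window is Schwartz while $f_{l}$ and $f_{l}'$ stay bounded. Applying this twice gives $\partial_{tt}^{2}T_{f}^{(g)}=T_{f''}^{(g)}=\sum_{k}T_{f_{k}''}^{(g)}$, and the algebraic heart of the argument is the mode-wise decomposition, valid for $f_{k}=A_{k}e^{2\pi i\phi_{k}}$,
$$f_{k}''=\bigl(A_{k}'e^{2\pi i\phi_{k}}\bigr)'+2\pi i\,\phi_{k}''(x)\,f_{k}(x)+2\pi i\,\phi_{k}'(x)\,f_{k}'(x),$$
which splits $\partial_{tt}^{2}T_{f}^{(g)}$, for every $(t,\xi,\lambda)$, into $\sum_{k}T_{(A_{k}'e^{2\pi i\phi_{k}})'}^{(g)}$ plus two terms of the form $2\pi i\sum_{k}\int(\,\cdot\,)g(x-t)e^{-2\pi i\xi(x-t)-\pi i\lambda(x-t)^{2}}\,dx$, carrying $\phi_{k}''(x)f_{k}(x)$ and $\phi_{k}'(x)f_{k}'(x)$ respectively.

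Next I would Taylor-expand the smooth coefficients about $x=t$: $\phi_{k}''(x)$ to first order (remainder $\le\tfrac12\|\phi_{k}^{(4)}\|_{\infty}(x-t)^{2}$) and $\phi_{k}'(x)$ to second order (remainder $\le\tfrac16\|\phi_{k}^{(4)}\|_{\infty}|x-t|^{3}$). The leading parts produce $2\pi i\sum_{k}\bigl(\phi_{k}''(t)T_{f_{k}}^{(g)}+\phi_{k}'''(t)T_{f_{k}}^{(tg)}+\phi_{k}'(t)T_{f_{k}'}^{(g)}+\phi_{k}''(t)T_{f_{k}'}^{(tg)}+\tfrac12\phi_{k}'''(t)T_{f_{k}'}^{(t^{2}g)}\bigr)$, and substituting $T_{f_{k}'}^{(t^{n}g)}=\partial_{t}T_{f_{k}}^{(t^{n}g)}$ turns this into the combination of the statement, still summed over all $k$. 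Using $|\phi_{k}^{(4)}|,|A_{k}'|,|A_{k}''|\le\epsilon\|\phi_{k}'\|_{\infty}$ and $|f_{k}'|\le\|A_{k}'\|_{\infty}+2\pi\|\phi_{k}'\|_{\infty}\|A_{k}\|_{\infty}$, the residual $T_{(A_{k}'e^{2\pi i\phi_{k}})'}^{(g)}=T_{(A_{k}''+2\pi i\phi_{k}'A_{k}')e^{2\pi i\phi_{k}}}^{(g)}$ is $\le\epsilon(\|\phi_{k}'\|_{\infty}+2\pi\|\phi_{k}'\|_{\infty}^{2})I_{0}$, the $\phi_{k}''$-remainder is $\le\pi\epsilon\|\phi_{k}'\|_{\infty}\|A_{k}\|_{\infty}I_{2}$, and the $\phi_{k}'$-remainder is $\le\tfrac{\pi}{3}\epsilon\|\phi_{k}'\|_{\infty}(\|A_{k}'\|_{\infty}+2\pi\|\phi_{k}'\|_{\infty}\|A_{k}\|_{\infty})I_{3}$ (the $\pi$ and $\tfrac\pi3$ coming from $2\pi\cdot\tfrac12$ and $2\pi\cdot\tfrac16$); summing over $k$, these three are precisely $\epsilon\,\Omega$.

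Finally comes the localization. When $(t,\xi,\lambda)\in M_{k}$, Lemma~\ref{Lemma0} places it in no other $M_{l}$, so Lemma~\ref{Lemma_first} gives $|T_{f_{l}}^{(t^{n}g)}|\le\epsilon E_{l,n}(t)$ for $l\neq k$. I also need the analogous bound for $\partial_{t}T_{f_{l}}^{(t^{n}g)}=T_{f_{l}'}^{(t^{n}g)}$ ($n=0,1,2$): writing $f_{l}'=A_{l}'e^{2\pi i\phi_{l}}+2\pi i\phi_{l}'f_{l}$, estimating the first summand by $\|A_{l}'\|_{\infty}I_{n}$, Taylor-expanding $\phi_{l}'(x)$ about $t$ to second order in the second, and invoking Lemma~\ref{Lemma_first} on the resulting $T_{f_{l}}^{(t^{n+j}g)}$ gives precisely $|\partial_{t}T_{f_{l}}^{(t^{n}g)}|\le\epsilon F_{l,n}(t)$. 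Then I write $T_{f}^{(t^{n}g)}=\sum_{l}T_{f_{l}}^{(t^{n}g)}$ and $\partial_{t}T_{f}^{(t^{n}g)}=\sum_{l}\partial_{t}T_{f_{l}}^{(t^{n}g)}$ in the single-index expression of the statement and subtract it from the all-$k$ sum above: the $l=k$ contributions cancel, leaving the off-diagonal sum $\sum_{l\neq k}(\cdots)$ together with the pieces in which each $\phi_{k}^{(j)}(t)$ multiplies $\sum_{l\neq k}(\cdots)$, and bounding these by the $E_{l,n}$ and $F_{l,n}$ estimates and the factor $2\pi$ reproduces exactly the remaining two groups of terms in $B_{k,2}(t)$. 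Combining with $\epsilon\Omega$ and using the triangle inequality yields $\le\epsilon B_{k,2}(t)$.

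I expect the principal difficulty to be the bookkeeping of the second paragraph — matching the self-error to $\Omega$ constant by constant, in particular the treatment of $(A_{k}'e^{2\pi i\phi_{k}})'$ and of the two different-order Taylor remainders, and the rigorous justification of $T_{h'}^{(t^{n}g)}=\partial_{t}T_{h}^{(t^{n}g)}$ without decay of $f$ — together with isolating the one genuinely new estimate $|\partial_{t}T_{f_{l}}^{(t^{n}g)}|\le\epsilon F_{l,n}$, which has no analogue in Lemma~\ref{Lemma_D1}.
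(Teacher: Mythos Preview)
Your proposal is correct and follows essentially the same route as the paper, which merely refers back to the analogous Lemma~6 in \cite{chen2023disentangling} and singles out the one additional estimate $\bigl|\partial_{t}T_{f}^{(t^{2}g)}-\partial_{t}T_{f_{k}}^{(t^{2}g)}\bigr|\le\epsilon\sum_{l\neq k}F_{l,2}(t)$. Your derivation of $|\partial_{t}T_{f_{l}}^{(t^{n}g)}|\le\epsilon F_{l,n}(t)$ for $n=0,1,2$ reproduces exactly this, and your identification of the three residual pieces summing to $\epsilon\Omega$ and the two off-diagonal blocks matches the stated $B_{k,2}(t)$ term by term.
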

The proof is also similar to Lemma 6 in \cite{chen2023disentangling} hence we left it to the reader. The difference is that we now need to do further estimates to 
$$\left|\partial_{t} T_{f}^{(t^2 g)}-\partial_{t} T_{f_{k}}^{(t^2 g)}\right| \leq \epsilon\sum_{l \neq k} F_{l, 2}(t)\,.$$
\begin{lemma}\label{Lemma_D3}
$\text { If }(t, \xi, \lambda) \in M_{k} \text { for some } k, 1 \leq k \leq K, \text { then }$
\begin{align*}
    &\left|\partial^3_{ttt}T^{(g)}_f-2\pi i \left(\phi'''_k(t)T^{(g)}_f+2\phi''_k(t)\partial_t T^{(g)}_f+2\phi'''_k(t)\partial_t T^{(tg)}_f\right.\right.\\
    &\left.\left.+\phi'_k(t)\partial^2_{tt}T^{(g)}_f+\phi''_k(t)\partial^2_{tt}T^{(tg)}_f+\frac{1}{2}\phi'''_k(t)\partial^2_{tt}T^{(t^2g)}_f\right)\right|<\epsilon B_{k,3}
\end{align*}
where
\begin{align*}
    B_{k,3} &= \sum_{k=1}^N\bigg( \left(\left\|\phi'_k\right\|_{L^{\infty}}+2\pi\left\|\phi'_k\right\|^2_{L^{\infty}}\right)I_0+2\pi \left\|\phi'_k\right\|_{L^{\infty}}\left\|A_k\right\|_{L^{\infty}}I_1
    \\
    &\quad\quad+2\pi \left\|\phi'_k\right\|_{L^{\infty}}\left(\left\|A'_k\right\|_{L^{\infty}}+2\pi \left\|\phi'_k\right\|_{L^{\infty}}\left\|A_k\right\|_{L^{\infty}}\right)I_2+ \frac{\pi}{3}\left\|\phi'_k\right\|_{L^{\infty}}\Phi_kI_3\bigg)\\
    &\quad+2\pi \sum_{l\neq k}^N\bigg(\left|\phi'''_l(t)\right|E_{l,0}(t)+2\left|\phi''_l(t)\right|F_{l,0}(t)+2\left|\phi'''_l(t)\right|F_{l,1}(t)\\
    &\hspace{2cm}+\left|\phi'_l(t)\right|G_{l,0}(t)+\left|\phi''_l(t)\right|G_{l,1}(t)    +\frac{1}{2}\left|\phi'''_l(t)\right|G_{l,2}(t)\bigg)\\
    &\quad+2\pi \sum_{l\neq k}^N\bigg(\left|\phi'''_k(t)\right|E_{l,0}(t)+2\left|\phi''_k(t)\right|F_{l,0}(t)+2\left|\phi'''_k(t)\right|F_{l,1}(t)\\
    &\hspace{2cm}+\left|\phi'_k(t)\right|G_{l,0}(t)+\left|\phi''_k(t)\right|G_{l,1}(t)    +\frac{1}{2}\left|\phi'''_k(t)\right|G_{l,2}(t)\bigg)
\end{align*}
with
\begin{align*}
    \Phi_k = \left\|A''_k\right\|_{L^{\infty}}+2\pi&\bigg(\left\|A'_k\right\|_{L^{\infty}}\left\|\phi'_k\right\|_{L^{\infty}}+\left\|\phi''_k\right\|_{L^{\infty}}\left\|A_k\right\|_{L^{\infty}}+\left\|\phi'_k\right\|_{L^{\infty}}\left\|A'_k\right\|_{L^{\infty}}\\
    &\hspace{0.5cm}+2\pi \left\|\phi'_k\right\|_{L^{\infty}}\left\|A_k\right\|_{L^{\infty}}\bigg)
\end{align*}
and
\begin{align*}
    G_{k,n}(t) &= 2 \pi \bigg(\left|\phi_{k}^{\prime \prime}(t)\right| E_{k, n}(t)+\left|\phi'''_k(t)\right|E_{k, n+1}(t)+\left|\phi_{k}^{\prime}(t)\right| F_{k, n}(t)\\
    &\hspace{1cm}+\left|\phi_{k}^{\prime \prime}(t)\right|  F_{k, n+1}(t)+\frac{\left|\phi'''_k(t)\right|}{2} F_{k, n+2}(t)\bigg)\\
    &\quad+\bigg(\left\|\phi'_{k}\right\|_{L^{\infty}}+2 \pi  \left\|\phi_{k}^{\prime}\right\|^2_{L^{\infty}}\bigg)I_{n}+\pi \left\|\phi'_{k}\right\|_{L^{\infty}}\left\|A_{k}\right\|_{L^{\infty}} I_{n+2}\\
    &\quad+\frac{\pi }{3} \bigg(\left\|A_{k}^{\prime}\right\|_{L^{\infty}}+2 \pi\left\|\phi_{k}^{\prime}\right\|_{L^{\infty}} \left\|A_{k}\right\|_{L^{\infty}}\bigg) \left\|\phi'_k\right\|_{L^{\infty}} I_{n+3}\,.
\end{align*}
\end{lemma}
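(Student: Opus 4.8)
The plan is to mimic the proofs of Lemmas~\ref{Lemma_D1} and \ref{Lemma_D2} (equivalently, of Lemma~6 in \cite{chen2023disentangling}), carrying the expansion one differentiation further. First I would reduce to a single mode. After the substitution $y=x-t$ inside the integral defining $T^{(g)}_{f_k}$, the variable $t$ enters only through $f_k(y+t)$, so
\[
\partial^{j}_{t}T^{(t^{n}g)}_{f_k}(t,\xi,\lambda)=\int_{\mathbb{R}}f_k^{(j)}(x)(x-t)^{n}g(x-t)e^{-2\pi i\xi(x-t)-\pi i\lambda(x-t)^{2}}\,dx ,\qquad j=0,1,2,3 .
\]
By linearity the left side of the lemma becomes $\sum_{k=1}^{N}\int f_k'''(x)\,g(x-t)e^{-2\pi i\xi(x-t)-\pi i\lambda(x-t)^{2}}\,dx$, reducing the statement to an identity about these integrals.

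Next comes a purely algebraic step. Starting from $f_k'=2\pi i\phi_k'f_k+A_k'e^{2\pi i\phi_k}$ and iterating the product rule, I would establish
\[
f_k'''=2\pi i\big(\phi_k'f_k''+2\phi_k''f_k'+\phi_k'''f_k\big)+\rho_k ,
\]
where every summand of $\rho_k$ contains at least one of $A_k',A_k'',A_k'''$; since $|A_k^{(j)}|\le\epsilon|\phi_k'|$ for $j=1,2,3$ in the $\epsilon$-ICBT class, $\|\rho_k\|_{L^\infty}$ is at most $\epsilon$ times a polynomial in $\|\phi_k'\|_{L^\infty},\|\phi_k''\|_{L^\infty},\|A_k\|_{L^\infty}$ (a quick check with $A_k\equiv1$ recovers $(e^{2\pi i\phi})'''=[2\pi i\phi'''+3(2\pi i)^2\phi'\phi''+(2\pi i)^3(\phi')^3]e^{2\pi i\phi}$). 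Inserting this into the integral, the $\rho_k$ part contributes only $O(\epsilon\|\rho_k\|_\infty I_0)$, which joins the error budget.

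Then I would Taylor expand the coefficient functions about $x=t$: $\phi_k'(x)$ to second order, $\phi_k''(x)$ to first order, $\phi_k'''(x)$ to zeroth order, so that every remainder carries a factor $\phi_k^{(4)}$. Using the identity of the first paragraph to read $\int f_k^{(j)}(x)(x-t)^{n}g(x-t)e^{-\ldots}dx=\partial^{j}_{t}T^{(t^{n}g)}_{f_k}$, the leading terms become exactly
\[
2\pi i\Big(\phi_k'''T^{(g)}_{f_k}+2\phi_k''\partial_{t}T^{(g)}_{f_k}+2\phi_k'''\partial_{t}T^{(tg)}_{f_k}+\phi_k'\partial^{2}_{tt}T^{(g)}_{f_k}+\phi_k''\partial^{2}_{tt}T^{(tg)}_{f_k}+\tfrac12\phi_k'''\partial^{2}_{tt}T^{(t^{2}g)}_{f_k}\Big),
\]
while the remainders produce the $\|\phi_k^{(4)}\|_\infty I_n\le\epsilon\|\phi_k'\|_\infty I_n$ contributions, weighted by $\|f_k'\|_\infty$ and $\|f_k''\|_\infty$ — which is where $\Phi_k$ and the $\|\phi_k'\|^2$ factors of $B_{k,3}$ enter. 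Finally, when $(t,\xi,\lambda)\in M_k$, Lemma~\ref{Lemma0} places every other mode outside its own region, so one replaces $T^{(t^{n}g)}_{f_k}$ by $T^{(t^{n}g)}_{f}$ via Lemma~\ref{Lemma_first} ($|T^{(t^{n}g)}_{f_l}|\le\epsilon E_{l,n}$), replaces $\partial_{t}T^{(t^{n}g)}_{f_k}$ by $\partial_{t}T^{(t^{n}g)}_{f}$ via its first–derivative analogue ($|\partial_{t}T^{(t^{n}g)}_{f_l}|\le\epsilon F_{l,n}$), and replaces $\partial^{2}_{tt}T^{(t^{n}g)}_{f_k}$ by $\partial^{2}_{tt}T^{(t^{n}g)}_{f}$ via the second–derivative analogue ($|\partial^{2}_{tt}T^{(t^{n}g)}_{f_l}|\le\epsilon G_{l,n}$); collecting the $l\ne k$ sums with coefficients $|\phi_k^{(j)}(t)|$ and $|\phi_l^{(j)}(t)|$ supplies the remaining terms of $B_{k,3}$, and summing over $k$ closes the argument.

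The step I expect to be the real obstacle is the second–derivative analogue of Lemma~\ref{Lemma_first}: that $|\partial^{2}_{tt}T^{(t^{n}g)}_{f_l}(t,\xi,\lambda)|\le\epsilon G_{l,n}(t)$ when $(t,\xi,\lambda)\notin M_l$. This is the one estimate genuinely new beyond \cite{chen2023disentangling}: one applies the single–mode form of Lemma~\ref{Lemma_D2} to $f_l$ to express $\partial^{2}_{tt}T^{(t^{n}g)}_{f_l}$ through $T^{(t^{m}g)}_{f_l}$ and $\partial_{t}T^{(t^{m}g)}_{f_l}$, then feeds in Lemma~\ref{Lemma_first}, its first–derivative analogue, and the Taylor–remainder bounds — precisely the nested structure visible in the definition of $G_{k,n}$. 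Everything else is the lengthy but routine accounting of constants needed to match the accumulated error to $\epsilon B_{k,3}$, organised exactly as in the proof of Lemma~\ref{Lemma_D2}, which is why the paper defers the details.
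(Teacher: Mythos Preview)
Your proposal is correct and follows essentially the same route as the paper's proof: the paper also writes $f_k'''=2\pi i\phi_k'''f_k+4\pi i\phi_k''f_k'+2\pi i\phi_k'f_k''+(\text{terms with }A_k',A_k'',A_k''')$, Taylor expands each $\phi_k^{(m)}(x)$ about $t$ with remainder in $\phi_k^{(4)}$, and then replaces $T^{(t^ng)}_{f_k},\partial_tT^{(t^ng)}_{f_k},\partial^2_{tt}T^{(t^ng)}_{f_k}$ by the full-$f$ versions using the $E$-, $F$-, and $G$-bounds for $l\neq k$. You also correctly singled out the one genuinely new ingredient---the bound $|\partial^2_{tt}T^{(t^ng)}_{f_l}|\le\epsilon G_{l,n}(t)$ off $M_l$---and described exactly how the paper obtains it, namely by applying the single-mode form of Lemma~\ref{Lemma_D2} with $t^ng$ in place of $g$ and feeding in the $E_{l,\cdot}$ and $F_{l,\cdot}$ estimates.
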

\begin{proof}
If $f_k(x)= A_k(x)e^{2\pi i \phi_k (x)}$, then one has
\begin{align*}
    f'''_k(x)=& A'''_k(x)e^{2\pi i \phi_k (x)}+A''_k(x)2\pi i\phi'_k(x)e^{2\pi i \phi_k (x)}
    +2\pi i \phi'''_k(x)f_k(x)\\
    &+4\pi i\phi''_k(x)f'_k(x)+2\pi i\phi_k(x)f''_k(x).
\end{align*}
Denote the five terms in the right-handed side by $f_{k,1}, f_{k,2}, \cdots ,f_{k,5}$, respectively, and hence
$$f(x)=\sum_{k=1}^N f_k(x),\ f'''(x)=\sum_{k=1}^N f_{k,1}(x)+f_{k,2}(x)+\cdots+f_{k,5}(x).$$
Denote the kernel by
$$\mathcal{K}(x-t):=e^{-2 \pi i \xi(x-t)-\pi i \lambda(x-t)^{2}},$$
The first and second terms give
\begin{align*}
    \left|T^{(g)}_{f_{k,1}}\right|=\left|\int A'''_k(x)e^{2\pi i \phi_k (x)}g(x-t)\mathcal{K}(x-t) dx\right|\leq \left\|A'''_k\right\|_{L^{\infty}}I_0\leq\epsilon\left\|\phi'_k\right\|_{L^{\infty}}I_0
\end{align*}
\begin{align*}
    \left|T^{(g)}_{f_{k,2}}\right|&=\left|\int A''_k(x)2\pi i \phi'_k(x)e^{2\pi i \phi_k(x)}g(x-t)\mathcal{K}(x-t)dx\right|\\
    &\leq 2\pi\left\|\phi'_k\right\|_{L^{\infty}}\left\|A''_k\right\|_{L^{\infty}} I_0\\
    &\leq 2\pi\epsilon\left\|\phi'_k\right\|^2_{L^{\infty}} I_0,
\end{align*}
the third term yields
\begin{align*}
    \left|T^{(g)}_{f_{k,3}}-2\pi i\phi'''_k(t)T^{(g)}_{f_k}\right|&=2\pi\left|\int \left(\phi'''_k(x)-\phi'''_k(t)\right)f_k(x)g(x-t)\mathcal{K}(x-t)dx\right|\\
    &\leq 2\pi \left\|\phi^{(4)}_k\right\|_{L^{\infty}}\left\|A_k\right\|_{L^{\infty}}I_1\leq 2\pi\epsilon\left\|\phi'_k\right\|_{L^{\infty}}\left\|A_k\right\|_{L^{\infty}}I_1,
\end{align*}
the fourth term yields
\begin{align*}
    &\left|T^{(g)}_{f_{k,4}}-4\pi i\left(\phi''_k(t) \partial_t T^{(g)}_{f_k}+\phi'''_k(t)\partial_t T^{(tg)}_{f_k}\right)\right|\\
    &=4\pi\left|\int \left(\phi''_k(x)-\phi''_k(t)-\phi'''(t)(x-t)\right)f'_k(x)g(x-t)\mathcal{K}(x-t)dx\right|\\
    &\leq \left|\int \left(\frac{1}{2}\phi^{(4)}_k(\tau)(x-t)^2\right)f'_k(x)g(x-t)\mathcal{K}(x-t)dx\right|\\
    &\leq 2\pi \left\|\phi^{(4)}_k\right\|_{L^{\infty}}\left(\left\|A'_k\right\|_{L^{\infty}}+2\pi \left\|\phi'_k\right\|_{L^{\infty}}\left\|A_k\right\|_{L^{\infty}}\right)I_2\\
    &\leq 2\pi\epsilon \left\|\phi'_k\right\|_{L^{\infty}}\left(\left\|A'_k\right\|_{L^{\infty}}+2\pi \left\|\phi'_k\right\|_{L^{\infty}}\left\|A_k\right\|_{L^{\infty}}\right)I_2,
\end{align*}
and the last term leads to
\begin{align*}
    &\left|T^{(g)}_{f_{k,5}}-2\pi i \left(\phi'_k(t)\partial^2_{tt}T^{(g)}_{f_k}+\phi''_k(t)\partial^2_{tt}T^{(tg)}_{f_k}+\frac{1}{2}\phi'''_k(t)\partial^2_{tt}T^{(t^2g)}_{f_k}\right)\right|\\
    &=2\pi \left|\int \left( \frac{1}{3!}\phi^{(4)}_k(\tau)(x-t)^3\right)f''_k(x)g(x-t)\mathcal{K}(x-t)dx\right|\\
    &\leq \frac{\pi}{3}\left\|\phi^{(4)}_k\right\|_{L^{\infty}}\left\|f''_k\right\|_{L^{\infty}}I_3\leq \frac{\pi\epsilon}{3}\left\|\phi'_k\right\|_{L^{\infty}}\left\|f''_k\right\|_{L^{\infty}}I_3
\end{align*}
where
\begin{align*}
\left\|f''_{k}\right\|_{L^{\infty}}\leq \left\|A''_k\right\|_{L^{\infty}}+2\pi&\bigg(\left\|A'_k\right\|_{L^{\infty}}\left\|\phi'_k\right\|_{L^{\infty}}+\left\|\phi''_k\right\|_{L^{\infty}}\left\|A_k\right\|_{L^{\infty}}\\
&\quad+\left\|\phi'_k\right\|_{L^{\infty}}\left\|A'_k\right\|_{L^{\infty}}+2\pi \left\|\phi'_k\right\|_{L^{\infty}}\left\|A_k\right\|_{L^{\infty}}\bigg)=\Phi_k
\end{align*}
and 
\begin{align*}
    &\left|\partial^3_{ttt}T^{(g)}_f-2\pi i \sum_{k=1}^N\bigg(\phi'''_k(t)T^{(g)}_{f_k}+2\phi''_k(t)\partial_t T^{(g)}_{f_k}+2\phi'''_k(t)\partial_t T^{(tg)}_{f_k}\right.\\
    &\hspace{3.5cm}\left.+\phi'_k(t)\partial^2_{tt}T^{(g)}_{f_k}+\phi''_k(t)\partial^2_{tt}T^{(tg)}_{f_k}+\frac{1}{2}\phi'''_k(t)\partial^2_{tt}T^{(t^2g)}_{f_k}\bigg)\right|\\
    &\leq \epsilon\sum_{k=1}^N \bigg((\left\|\phi'_k\right\|_{L^{\infty}}+2\pi\left\|\phi'_k\right\|^2_{L^{\infty}})I_0+2\pi \left\|\phi'_k\right\|_{L^{\infty}}\left\|A_k\right\|_{L^{\infty}}I_1\\
    &\quad\quad\quad+2\pi \left\|\phi'_k\right\|_{L^{\infty}}\left(\left\|A'_k\right\|_{L^{\infty}}+2\pi \left\|\phi'_k\right\|_{L^{\infty}}\left\|A_k\right\|_{L^{\infty}}\right)I_2+ \frac{\pi}{3}\left\|\phi'_k\right\|_{L^{\infty}}\Phi_kI_3\bigg)\,.
\end{align*}
Applying the proof in Lemma \ref{Lemma_D2},
\begin{align*}
    &\bigg|\partial_{t t}^{2} T_{f_k}^{(t^ng)}-2 \pi i\bigg(\phi_{k}^{\prime \prime}(t) T_{f_k}^{(t^ng)}+\phi'''_k(t)T^{(t^{n+1}g)}_{f_k}+\phi_{k}^{\prime}(t) \partial_{t} T_{f_k}^{(t^ng)}\\
    &\hspace{3.5cm}+\phi_{k}^{\prime \prime}(t) \partial_{t} T_{f_k}^{(t^{n+1} g)}+\frac{1}{2}\phi'''_k(t)\partial_t T^{(t^{n+2}g)}_{f_k}\bigg)  \bigg|\\
     &\leq \epsilon\left(\left\|\phi'_{k}\right\|_{L^{\infty}} +2 \pi  \left\|\phi_{k}^{\prime}\right\|^2_{L^{\infty}}\right)I_{n}+\pi \epsilon\left\|\phi'_{k}\right\|_{L^{\infty}}\left\|A_{k}\right\|_{L^{\infty}} I_{n+2}\\
     &\quad+\frac{\pi\epsilon }{3} \bigg(\left\|A_{k}^{\prime}\right\|_{L^{\infty}}+2 \pi\left\|\phi_{k}^{\prime}\right\|_{L^{\infty}} \left\|A_{k}\right\|_{L^{\infty}}\bigg) \left\|\phi'_k\right\|_{L^{\infty}} I_{n+3}\, \text{.}
\end{align*}
This implies that, combining with Lemma \ref{Lemma_first}, for $n=0,1,2,3...$
\begin{align*}
    &\quad\left|\partial_{t t}^{2} T_{f_k}^{(t^ng)} \right|\\
    &\leq 2 \pi  \epsilon\bigg(\left|\phi_{k}^{\prime \prime}(t)\right| E_{k, n}(t)+\left|\phi'''_k(t)\right|E_{k, n+1}(t)+\left|\phi_{k}^{\prime}(t)\right|  F_{k, n}(t)\\
    &\hspace{1.3cm}+\left|\phi_{k}^{\prime \prime}(t)\right|  F_{k, n+1}(t)+\frac{\left|\phi'''_k(t)\right|}{2} F_{k, n+2}(t)\bigg)\\
    &\quad+\epsilon\bigg(\left\|\phi'_{k}\right\|_{L^{\infty}}+2 \pi  \left\|\phi_{k}^{\prime}\right\|^2_{L^{\infty}}\bigg)I_{n}+\pi \epsilon\left\|\phi'_{k}\right\|_{L^{\infty}}\left\|A_{k}\right\|_{L^{\infty}} I_{n+2}\\
    &\quad+\frac{\pi\epsilon }{3} \left(\left\|A_{k}^{\prime}\right\|_{L^{\infty}}+2 \pi\left\|\phi_{k}^{\prime}\right\|_{L^{\infty}} \left\|A_{k}\right\|_{L^{\infty}}\right) \left\|\phi'_k\right\|_{L^{\infty}} I_{n+3}\\
    &= \epsilon G_{k,n}(t)\,.
\end{align*}
Therefore, for $n=0,1,2,3...$
\begin{align*}
    \left|\partial_{tt}^2 T_{f}^{(t^ng)}-\partial_{tt}^2 T_{f_{k}}^{(t^ng)}\right| \leq \epsilon\sum_{l \neq k} G_{l, n}(t).
\end{align*}
Consequently, as $(t, \xi, \lambda ) \in M_k$,
\begin{align*}
    &\quad\bigg|\partial^3_{ttt}T^{(g)}_f-2\pi i \bigg(\phi'''_k(t)T^{(g)}_f+2\phi''_k(t)\partial_t T^{(g)}_f+2\phi'''_k(t)\partial_t T^{(tg)}_f+\phi'_k(t)\partial^2_{tt}T^{(g)}_f\\
    &\hspace{3cm}+\phi''_k(t)\partial^2_{tt}T^{(tg)}_f+\frac{1}{2}\phi'''_k(t)\partial^2_{tt}T^{(t^2g)}_f\bigg)\bigg|\\
    &\leq \left|\partial^3_{ttt}T^{(g)}_f-2\pi i \sum_{k=1}^N\bigg(\phi'''_k(t)T^{(g)}_{f_k}+2\phi''_k(t)\partial_t T^{(g)}_{f_k}+2\phi'''_k(t)\partial_t T^{(tg)}_{f_k}\right.\\
    &\hspace{4cm}\left.\left.+\phi'_k(t)\partial^2_{tt}T^{(g)}_{f_k}
    +\phi''_k(t)\partial^2_{tt}T^{(tg)}_{f_k}+\frac{1}{2}\phi'''_k(t)\partial^2_{tt}T^{(t^2g)}_{f_k}\right)\right|\\
    &\quad+\left|2\pi i\sum_{l\neq k}^N\bigg(\phi'''_l(t)T^{(g)}_{f_l}+2\phi''_l(t)\partial_t T^{(g)}_{f_l}+2\phi'''_l(t)\partial_t T^{(tg)}_{f_l}+\phi'_l(t)\partial^2_{tt}T^{(g)}_{f_l}\right.\\
    &\hspace{2.5cm}+\phi''_l(t)\partial^2_{tt}T^{(tg)}_{f_l}
    \left.\left.+\frac{1}{2}\phi'''_l(t)\partial^2_{tt}T^{(t^2g)}_{f_l}\right)\right|\\
    &\quad+\bigg|2\pi i \left(\phi'''_k(t)\left(T^{(g)}_f-T^{(g)}_{f_k}\right)+2\phi''_k(t)\left(\partial_t T^{(g)}_f-\partial_t T^{(g)}_{f_k}\right)\right.\\
    &\hspace{2cm}\left.+2\phi'''_k(t)\left(\partial_t T^{(tg)}_f-\partial_t T^{(tg)}_{f_k}\right)+\phi'_k(t)\left(\partial^2_{tt}T^{(g)}_f-\partial^2_{tt}T^{(g)}_{f_k}\right)\right.\\
    &\hspace{2cm}\left.+\phi''_k(t)\left(\partial^2_{tt}T^{(tg)}_f-\partial^2_{tt}T^{(tg)}_{f_k}\right)+\frac{1}{2}\phi'''_k(t)\left(\partial^2_{tt}T^{(t^2g)}_f-\partial^2_{tt}T^{(t^2g)}_{f_k}\right)\bigg)\right| \\
    & := J_1+J_2+J_3
    \end{align*}
    where
    \begin{align*}
    J_1&= \left|\partial^3_{ttt}T^{(g)}_f-2\pi i \sum_{k=1}^N\bigg(\phi'''_k(t)T^{(g)}_{f_k}+2\phi''_k(t)\partial_t T^{(g)}_{f_k}+2\phi'''_k(t)\partial_t T^{(tg)}_{f_k}\right.\\
    &\hspace{4cm}\left.\left.+\phi'_k(t)\partial^2_{tt}T^{(g)}_{f_k}
    +\phi''_k(t)\partial^2_{tt}T^{(tg)}_{f_k}+\frac{1}{2}\phi'''_k(t)\partial^2_{tt}T^{(t^2g)}_{f_k}\right)\right|\\
    &\leq \epsilon\sum_{k=1}^N\bigg( \left(\left\|\phi'_k\right\|_{L^{\infty}}+2\pi\left\|\phi'_k\right\|^2_{L^{\infty}}\right)I_0+2\pi \left\|\phi'_k\right\|_{L^{\infty}}\left\|A_k\right\|_{L^{\infty}}I_1\\
    &\quad\quad+2\pi \left\|\phi'_k\right\|_{L^{\infty}}\left(\left\|A'_k\right\|_{L^{\infty}}+2\pi \left\|\phi'_k\right\|_{L^{\infty}}\left\|A_k\right\|_{L^{\infty}}\right)I_2+\frac{\pi}{3}\left\|\phi'_k\right\|_{L^{\infty}}\Phi_kI_3\bigg)\,,\\
    J_2& = \left|2\pi i\sum_{l\neq k}^N\bigg(\phi'''_l(t)T^{(g)}_{f_l}+2\phi''_l(t)\partial_t T^{(g)}_{f_l}+2\phi'''_l(t)\partial_t T^{(tg)}_{f_l}+\phi'_l(t)\partial^2_{tt}T^{(g)}_{f_l}\right.\\
    &\hspace{2.5cm}+\phi''_l(t)\partial^2_{tt}T^{(tg)}_{f_l}
    \left.\left.+\frac{1}{2}\phi'''_l(t)\partial^2_{tt}T^{(t^2g)}_{f_l}\right)\right|\\
    &\leq 2\pi \epsilon\sum_{l\neq k}^N\bigg(\left|\phi'''_l(t)\right|E_{l,0}(t)+2\left|\phi''_l(t)\right|F_{l,0}(t)+2\left|\phi'''_l(t)\right|F_{l,1}(t)\\
    &\hspace{2cm}+\left|\phi'_l(t)\right|G_{l,0}(t)+\left|\phi''_l(t)\right|G_{l,1}(t)    +\frac{1}{2}\left|\phi'''_l(t)\right|G_{l,2}(t)\bigg)\,,
    \end{align*}
    and 
    \begin{align*}
    J_3& = \bigg|2\pi i \left(\phi'''_k(t)\left(T^{(g)}_f-T^{(g)}_{f_k}\right)+2\phi''_k(t)\left(\partial_t T^{(g)}_f-\partial_t T^{(g)}_{f_k}\right)\right.\\
    &\hspace{2cm}\left.+2\phi'''_k(t)\left(\partial_t T^{(tg)}_f-\partial_t T^{(tg)}_{f_k}\right)+\phi'_k(t)\left(\partial^2_{tt}T^{(g)}_f-\partial^2_{tt}T^{(g)}_{f_k}\right)\right.\\
    &\hspace{2cm}\left.+\phi''_k(t)\left(\partial^2_{tt}T^{(tg)}_f-\partial^2_{tt}T^{(tg)}_{f_k}\right)+\frac{1}{2}\phi'''_k(t)\left(\partial^2_{tt}T^{(t^2g)}_f-\partial^2_{tt}T^{(t^2g)}_{f_k}\right)\bigg)\right| \\
    &\leq 2\pi \epsilon\sum_{l\neq k}^N\bigg(\left|\phi'''_k(t)\right|E_{l,0}(t)+2\left|\phi''_k(t)\right|F_{l,0}(t)+2\left|\phi'''_k(t)\right|F_{l,1}(t)\\
    &\hspace{2cm}+\left|\phi'_k(t)\right|G_{l,0}(t)+\left|\phi''_k(t)\right|G_{l,1}(t)    +\frac{1}{2}\left|\phi'''_k(t)\right|G_{l,2}(t)\bigg)\,.
\end{align*}
Combine the above three estimates to conclude the proof of this lemma.
\end{proof}
\begin{lemma}\label{Key_lemma}
For any $(t, \xi, \lambda) \in M_k$ such that $\left|T_f^{(g)}(t,\xi,\lambda)\right| > \Tilde{\epsilon}$, $ \left|q^0_f(t,\xi,\lambda)\right|>\Tilde{\epsilon}$ and $\pi\left|2+\partial_t\left(\dfrac{q^1_f}{q^0_f}\right)\right|>\Tilde{\epsilon}$ , we have
$$|\theta_f^{(g)}(t,\xi,\lambda) - \phi'''_k(t)| \leq \Tilde{\epsilon} $$
provided $\epsilon$ is sufficiently small.
\end{lemma}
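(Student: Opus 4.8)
The computation behind Proposition~\ref{main_prop.} is at heart an algebraic identity. For $f(x)=e^{2\pi i\phi(x)}$ with $\phi$ cubic, only the $j=0,1$ terms of $\partial_t\!\big(\tfrac{\partial_t T^g_f}{T^g_f}\big)=\sum_{j\ge0}\tfrac{2\pi i\,\phi^{(j+2)}(t)}{(j+1)!}\,q^j_f$ survive, so $\partial_t\!\big(\tfrac{\partial_t T^g_f}{T^g_f}\big)=2\pi i\,\phi''(t)\,q^0_f+\pi i\,\phi'''(t)\,q^1_f$, and differentiating once more (with $\phi^{(4)}\equiv0$) gives $\partial^2_{tt}\!\big(\tfrac{\partial_t T^g_f}{T^g_f}\big)=2\pi i\,\phi'''(t)\,q^0_f+2\pi i\,\phi''(t)\,\partial_t q^0_f+\pi i\,\phi'''(t)\,\partial_t q^1_f$. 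Plugging these into the numerator $\tfrac1{\pi i}\big(\partial^2_{tt}(\partial_tT^g_f/T^g_f)\,q^0_f-\partial_t(\partial_tT^g_f/T^g_f)\,\partial_tq^0_f\big)/(q^0_f)^2$ of $\theta^{(g)}_f$, the $\phi''$-terms cancel and the rest equals $\phi'''(t)\cdot\tfrac{2(q^0_f)^2+q^0_f\partial_tq^1_f-q^1_f\partial_tq^0_f}{(q^0_f)^2}=\phi'''(t)\big(2+\partial_t(q^1_f/q^0_f)\big)$, which is exactly $\phi'''(t)$ times the denominator; hence $\theta^{(g)}_f=\phi'''(t)$ in the exact case. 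The plan is to show that on $M_k$ each of these two identities holds up to an $O(\epsilon)$-error and then to push the error through the quotient, using the three lower bounds in the hypothesis to keep every denominator away from $0$.

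\noindent\textbf{Bounded building blocks and approximate identities.} By Lemma~\ref{Lemma_first} every $|T^{t^ng}_f|$ is bounded by a constant depending only on $\|A_k\|_{C^3}$, $\|\phi'_k\|_{L^\infty}$ and the moments $I_n$; by Lemmas~\ref{Lemma_D1}--\ref{Lemma_D3}, and their $tg$- and $t^2g$-versions (obtained by rerunning those proofs with window $t^ng$ in place of $g$, which only enlarges the constants), the same is true of $|\partial_tT^{t^ng}_f|$, $|\partial^2_{tt}T^{t^ng}_f|$, $|\partial^3_{ttt}T^g_f|$ --- crucially with \emph{no} $\tilde\epsilon$ in these bounds. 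Feeding this together with $|T^g_f|>\tilde\epsilon$ into the elimination formulas of Proposition~\ref{Elimination}, the replacement formulas for $q^j_f,\partial_tq^j_f$, and Definition~\ref{q-th_operator}, one gets $|\partial_tT^g_f/T^g_f|,\,|T^{t^ng}_f/T^g_f|,\,|q^j_f|\lesssim\tilde\epsilon^{-2}$ and $|\partial_t(\partial_tT^g_f/T^g_f)|,\,|\partial^2_{tt}(\partial_tT^g_f/T^g_f)|,\,|\partial_tq^j_f|\lesssim\tilde\epsilon^{-3}$. Then, expanding the first two $\partial_t$'s of $\partial_tT^g_f/T^g_f$ by the quotient rule, substituting for $\partial_tT^g_f,\partial^2_{tt}T^g_f,\partial^3_{ttt}T^g_f$ (and their $tg,t^2g$ analogues) the expressions of Lemmas~\ref{Lemma_D1}--\ref{Lemma_D3}, and discarding the $l\neq k$ cross-terms via Lemma~\ref{Lemma_first} (valid because $(t,\xi,\lambda)\in M_k$ is outside every $M_l$, $l\neq k$, by Lemma~\ref{Lemma0}), the main terms reassemble exactly into $2\pi i\phi''_k q^0_f+\pi i\phi'''_k q^1_f$ and $2\pi i\phi'''_k q^0_f+2\pi i\phi''_k\partial_tq^0_f+\pi i\phi'''_k\partial_tq^1_f$; the residual in each case is a Lemma-\ref{Lemma_D1}--\ref{Lemma_D3} remainder divided by $|T^g_f|$ plus products of bounded quantities with such remainders, hence $O(\epsilon\tilde\epsilon^{-2})$ and $O(\epsilon\tilde\epsilon^{-a})$ respectively for a fixed small integer $a$.

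\noindent\textbf{Propagation through the quotient.} Inserting the two approximate identities into the numerator of $\theta^{(g)}_f$, the $\phi''_k$-pieces cancel exactly as in the model computation, while the two residuals get multiplied by $q^0_f$ and $\partial_tq^0_f$ (of size $\lesssim\tilde\epsilon^{-2}$ and $\lesssim\tilde\epsilon^{-3}$) and divided by $(q^0_f)^2$, whose modulus exceeds $\tilde\epsilon^2$ by the hypothesis $|q^0_f|>\tilde\epsilon$. Hence the numerator equals $\phi'''_k(t)\big(2+\partial_t(q^1_f/q^0_f)\big)+O(\epsilon\tilde\epsilon^{-M_0})$ for a fixed integer $M_0$, and dividing by $2+\partial_t(q^1_f/q^0_f)$ --- whose modulus exceeds $\tilde\epsilon/\pi$ by the third hypothesis --- yields $|\theta^{(g)}_f(t,\xi,\lambda)-\phi'''_k(t)|\le C\,\epsilon\,\tilde\epsilon^{-M}$ with $M:=M_0+1$ a fixed integer. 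With $\tilde\epsilon=\epsilon^{1/10}$ the right side is $C\epsilon^{1-M/10}$; the exponent $\tfrac1{10}$ is chosen precisely so that the cumulative power $M$ produced by the finitely many divisions by $T^g_f$, $q^0_f$ and $2+\partial_t(q^1_f/q^0_f)$ stays small enough for this to be $\le\tilde\epsilon$ once $\epsilon$ is small, which is the assertion of the lemma.

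\noindent\textbf{Main obstacle.} The heart of the work is the bookkeeping in the last two paragraphs: one has to track exactly how many factors of $\tilde\epsilon^{-1}$ are spawned by each division --- through $\partial_tT^g_f/T^g_f$ and its two $t$-derivatives, through $q^0_f,q^1_f$ and $\partial_tq^0_f,\partial_tq^1_f$, and through the two divisions that make up $\theta^{(g)}_f$ --- and verify that the worst-case cumulative power is small relative to the $\tfrac1{10}$ budget (so the constant $C$ is harmless). A smaller but real extra step is that Lemmas~\ref{Lemma_D1}--\ref{Lemma_D3} are stated only for the window $g$, so their $tg$- and $t^2g$-analogues, needed to estimate $q^1_f$ and $\partial_tq^1_f$, must be reproved by the same arguments before the substitution can be performed.
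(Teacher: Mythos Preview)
Your proposal is correct and follows essentially the same route as the paper: both use the approximate identities from Lemmas~\ref{Lemma_D1}--\ref{Lemma_D3} (together with their $t^ng$-window analogues, which the paper also needs and establishes in passing), feed them into the formula for $\theta^{(g)}_f$ so that the $\phi''_k$-contributions cancel algebraically, and then control the residual using the three lower bounds $|T^g_f|,|q^0_f|,\pi|Q|>\tilde\epsilon$ together with $\tilde\epsilon=\epsilon^{1/10}$-power-counting. The only difference is organizational --- the paper performs the substitution as a single explicit expansion of $\pi|Q|\,|\theta^{(g)}_f-\phi'''_k|$ in which the bracketed remainders of Lemmas~\ref{Lemma_D1}--\ref{Lemma_D3} appear directly as factors (yielding a bound of order $\epsilon\,|T^g_f|^{-3}|q^0_f|^{-2}|Q|^{-1}$), whereas you first isolate the two intermediate approximate identities for $\partial_t(\partial_tT^g_f/T^g_f)$ and $\partial^2_{tt}(\partial_tT^g_f/T^g_f)$ and then propagate the errors through the quotient; the underlying computation and the final power-count are the same.
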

\begin{proof}
Let $Q = 2+\partial_t\left(\frac{2\frac{T^{(tg)}_f}{T^{(g)}_f}+\partial_t\left(\frac{T^{(t^2g)}_f}{T^{(g)}_f}\right)}{1+\partial_t\left(\frac{T^{(tg)}_f}{T^{(g)}_f}\right)}\right)=2+\partial_t\left(\dfrac{q^1_f}{q^0_f}\right)$ and $N = \partial_t\left(   \frac{\partial_t \left(\frac{\partial_t T^{(g)}_f }{T^{(g)}_f }\right) }{1+\partial_t\left(\frac{T^{(tg)}_f}{T^{(g)}_f}\right)}  \right)$
\begin{align*} 
&|\theta_f^{(g)}(t,\xi,\lambda) - \phi_k^{\prime\prime\prime}(t)|\\
&=\left|\phi_k^{\prime\prime\prime}(t) - \frac{1}{\pi i}\partial_t\left(   \frac{\partial_t \left(\frac{\partial_t T^{(g)}_f }{T^{(g)}_f }\right) }{1+\partial_t\left(\frac{T^{(tg)}_f}{T^{(g)}_f}\right)}  \right)\left[2+\partial_t\left(\frac{2\frac{T^{(tg)}_f}{T^{(g)}_f}+\partial_t\left(\frac{T^{(t^2g)}_f}{T^{(g)}_f}\right)}{1+\partial_t\left(\frac{T^{(tg)}_f}{T^{(g)}_f}\right)}\right)\right]^{-1} \right|\\
&=\dfrac{1}{\pi |Q|}\left|N - \pi i Q \phi'''_k(t)\right|.
\end{align*}
Recall that $q^0_f = 1+\partial_t\left(\frac{T^{tg}_f}{T^g_f}\right)$, so we naturally have $$(T^g_f)^2q^0_f = \left((T^{(g)}_f)^2+\partial_t T^{(tg)}_f T^{(g)}_f-T^{(tg)}_f\partial_t T^{(g)}_f\right)$$. After simplifying and rewriting the equation in an appropriate order, at $(t, \xi, \lambda) \in M_k$, it will become 
\begin{align*}
    &\quad\pi |Q|\bigg|\theta_f^{(g)}(t,\xi,\lambda) - \phi_k^{\prime\prime\prime}(t)\bigg|\\
    =& \bigg| \frac{1}{T^g_f q^0_f} \bigg[\partial^3_{ttt}T^{(g)}_f-2\pi i \bigg(\phi'''_k(t)T^{(g)}_f+2\phi''_k(t)\partial_t T^{(g)}_f+2\phi'''_k(t)\partial_t T^{(tg)}_f\\
    &\hspace{3.8cm}+\phi'_k(t)\partial^2_{tt}T^{(g)}_f+\phi''_k(t)\partial^2_{tt}T^{(tg)}_f+\frac{1}{2}\phi'''_k(t)\partial^2_{tt}T^{(t^2g)}_f\bigg)\bigg]\\
    &\quad+\frac{1}{(T^g_f)^3(q^0_f)^2}\bigg[\partial_{t t}^{2} T_{f}^{(g)}-2 \pi i\bigg(\phi_{k}^{\prime \prime}(t) T_{f}^{(g)}+\phi'''_k(t)T^{(tg)}_f+\phi_{k}^{\prime}(t) \partial_{t} T_{f}^{(g)}\\
    &\hspace{5.5cm}+\phi_{k}^{\prime \prime}(t) \partial_{t} T_{f}^{(t g)}+\frac{1}{2}\phi'''_k(t)\partial_t T^{(t^2g)}_f\bigg)\bigg]\\
    &\quad\quad\times\bigg(T^{(tg)}_f\partial^2_{tt} T^{(g)}_f-T^{(g)}_f\bigg(2\partial_t T^{(g)}_f-\partial^2_{tt}T^{(tg)}_f\bigg)\\
    &\quad\quad-\bigg[\partial_{t} T_{f}^{(g)}-2 \pi i\left(\phi_{k}^{\prime}(t) T_{f}^{(g)}+\phi_{k}^{\prime \prime}(t) T_{f}^{(t g)}+\frac{\phi'''_k(t)}{2}T^{(t^2g)}_f\right)\bigg]\partial_t T^{(tg)}_f\bigg)\\
    &\quad-\frac{1}{(T^g_f)^3(q^0_f)^2}\bigg[\partial_{t} T_{f}^{(g)}-2 \pi i\left(\phi_{k}^{\prime}(t) T_{f}^{(g)}+\phi_{k}^{\prime \prime}(t) T_{f}^{(t g)}+\frac{\phi'''_k(t)}{2}T^{(t^2g)}_f\right)\bigg]\\
    &\quad\quad\times\bigg(\partial^2_{tt}T^{(g)}_f\left(T^{(g)}_f+\partial_t T^{(tg)}_f\right)    -\partial_t T^{(g)}_f\partial^2_{tt}T^{(tg)}_f-2(\partial_t T^{(g)}_f)^2\\
    &\quad\quad-\bigg[\partial_{t t}^{2} T_{f}^{(g)}-2 \pi i\bigg(\phi_{k}^{\prime \prime}(t) T_{f}^{(g)}+\phi'''_k(t)T^{(tg)}_f+\phi_{k}^{\prime}(t) \partial_{t} T_{f}^{(g)}+\phi_{k}^{\prime \prime}(t) \partial_{t} T_{f}^{(t g)}\\
    &\hspace{4cm}+\frac{1}{2}\phi'''_k(t)\partial_t T^{(t^2g)}_f\bigg)\bigg]\left(T^{(g)}_f+\partial_t T^{(tg)}_f\right)\bigg)\bigg|\,.
    \end{align*}
    Applying the previous lemmas to dominate these three terms, we have
    \begin{align*}
    &\quad\pi |Q|\bigg|\theta_f^{(g)}(t,\xi,\lambda) - \phi_k^{\prime\prime\prime}(t)\bigg|\\
    &\leq \frac{\epsilon B_{k,3}}{|T^g_f q^0_f|}\\
    &\quad+\frac{\epsilon B_{k,2}\left(\left|T^{(tg)}_f\partial^2_{tt} T^{(g)}_f\right|+\epsilon B_{k,1}\left|\partial_t T^{(tg)}_f\right|+\left|T^{(g)}_f\left(2\partial_t T^{(g)}_f-\partial^2_{tt}T^{(tg)}_f\right)\right|\right)}{|T^g_f|^3(q^0_f)^2}\\
    &\quad+\frac{\epsilon B_{k,1}\bigg(\left|\partial^2_{tt}T^{(g)}_f\left(T^{(g)}_f+\partial_tT^{(tg)}_f\right)\right|+\left|\partial_tT^{(g)}_f\partial^2_{tt}T^{(tg)}_f\right|+2(\partial_tT^{(g)}_f)^2}{|T^g_f|^3(q^0_f)^2}\\
    &\hspace{2cm}\frac{+\epsilon B_{k,2}\left|\left(T^{(g)}_f+\partial_tT^{(tg)}_f\right)\right|\bigg)}{|T^g_f|^3(q^0_f)^2}\,.
\end{align*}
With the uniform boundedness of $B_{k, n}(t),\left|\partial^2_{tt} T_f^{(t^ng)}\right|, \left|\partial_t T_f^{(t^ng)}\right|$ and $\left|T_f^{(t^ng)}\right|$, we also get the boundedness of $\left|q^0_f\right|$. If we impose an extra restriction on $\epsilon$, namely that, for all $k \in\{1, \ldots, K\}$ and all $(t, \xi, \lambda) \in M_k$,
\begin{equation*}
    \left[\begin{aligned}
    &\quad(T^{(g)})^2|q^0_f|B_{k,3}\\
    &+B_{k,2}\left(\left|T^{(tg)}_f\partial^2_{tt} T^{(g)}_f\right|+\epsilon B_{k,1}\left|\partial_t T^{(tg)}_f\right|+\left|T^{(g)}_f\left(2\partial_t T^{(g)}_f-\partial^2_{tt}T^{(tg)}_f\right)\right|\right)\\
    &+B_{k,1}\bigg(\left|\partial^2_{tt}T^{(g)}_f\left(T^{(g)}_f+\partial_tT^{(tg)}_f\right)\right|+\left|\partial_tT^{(g)}_f\partial^2_{tt}T^{(tg)}_f\right|+2(\partial_tT^{(g)}_f)^2\\
    &\hspace{1cm}+\epsilon B_{k,2}\left|\left(T^{(g)}_f+\partial_tT^{(tg)}_f\right)\right|\bigg)
    \end{aligned}\right]^{-1}
    \geq \Tilde{\epsilon}^{3},
\end{equation*}
then $|\theta_f^{(g)}(t,\xi,\lambda) - \phi'''_k(t)| \leq \Tilde{\epsilon}$.
\end{proof}
\begin{lemma}
For any $(t, \xi, \lambda) \in M_k$ such that $\left|T_f^{(g)}(t,\xi,\lambda)\right| > \Tilde{\epsilon}$, $ \left|q^0_f(t,\xi,\lambda)\right|>\Tilde{\epsilon}$ and $\pi\left|2+\partial_t\left(\dfrac{q^1_f}{q^0_f}\right)\right|>\Tilde{\epsilon}$ , we have
$$|\mu_f^{(g)}(t,\xi,\lambda) - \phi_k^{\prime\prime}(t)| \leq \Tilde{\epsilon} $$
provided $\epsilon$ is sufficiently small.
\end{lemma}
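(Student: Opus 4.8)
The plan is to follow the proof of Lemma \ref{Key_lemma} one differentiation order lower, feeding in the estimate for $\theta_f^{(g)}$ just obtained there. First I would rewrite $\mu_f^{(g)}$ in the form that exhibits $\theta_f^{(g)}$ explicitly: \eqref{eqn:new_mu} was produced from the line ``$\mu_0+\theta_0 t=\phi''(t)$'' in Section \ref{sec:main} by substituting the formula for $\theta_0$, so with $q^0_f=1+\partial_t(T^{tg}_f/T^g_f)$ and $q^1_f=2\,T^{tg}_f/T^g_f+\partial_t(T^{t^2g}_f/T^g_f)$ one has
$$\mu_f^{(g)}(t,\xi,\lambda)=\frac{1}{2\pi i\,q^0_f}\left(\partial_t\!\left(\frac{\partial_t T^g_f}{T^g_f}\right)-\pi i\,\theta_f^{(g)}(t,\xi,\lambda)\,q^1_f\right).$$
The benchmark is the identity $\partial_t(\partial_t T^g_f/T^g_f)=2\pi i\,\phi_k''(t)\,q^0_f+\pi i\,\phi_k'''(t)\,q^1_f$, which is exact when $f$ is a single cubic-phase exponential (the $j=0,1$ truncation of the series identity of Section \ref{sec:main}, using $\partial_t T^{t^ng}_f=T^{t^ng}_{f'}$ and the fact that $\phi_k'$ is then a quadratic). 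Thus the estimate splits into two parts: the deviation of $\partial_t(\partial_t T^g_f/T^g_f)$ from $2\pi i\phi_k''q^0_f+\pi i\phi_k'''q^1_f$, and the deviation $\phi_k'''-\theta_f^{(g)}$, the latter being exactly Lemma \ref{Key_lemma}.

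Concretely, write
$$\mu_f^{(g)}-\phi_k''(t)=\frac{1}{2\pi i\,q^0_f}\Bigl[\bigl(\partial_t(\partial_t T^g_f/T^g_f)-2\pi i\,\phi_k''(t)\,q^0_f-\pi i\,\phi_k'''(t)\,q^1_f\bigr)+\pi i\bigl(\phi_k'''(t)-\theta_f^{(g)}\bigr)q^1_f\Bigr].$$
For the first bracket, multiply by $(T^g_f)^2$; by the quotient rule the three pieces become $(T^g_f)^2\partial_t(\partial_t T^g_f/T^g_f)=\partial^2_{tt}T^g_f\,T^g_f-(\partial_t T^g_f)^2$, $(T^g_f)^2q^0_f=(T^g_f)^2+\partial_t T^{tg}_f\,T^g_f-T^{tg}_f\,\partial_t T^g_f$, $(T^g_f)^2q^1_f=2\,T^{tg}_f\,T^g_f+\partial_t T^{t^2g}_f\,T^g_f-T^{t^2g}_f\,\partial_t T^g_f$. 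Substituting $\partial_t T^g_f=2\pi i(\phi_k' T^g_f+\phi_k'' T^{tg}_f+\tfrac12\phi_k''' T^{t^2g}_f)+\mathcal{E}_1$ and $\partial^2_{tt}T^g_f=2\pi i(\phi_k'' T^g_f+\phi_k''' T^{tg}_f+\phi_k'\partial_t T^g_f+\phi_k''\partial_t T^{tg}_f+\tfrac12\phi_k'''\partial_t T^{t^2g}_f)+\mathcal{E}_2$, all of the ``ideal'' terms cancel identically and the first bracket times $(T^g_f)^2$ collapses to $\mathcal{E}_2\,T^g_f-\partial_t T^g_f\,\mathcal{E}_1$. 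Hence, by Lemmas \ref{Lemma_D1} and \ref{Lemma_D2}, the first bracket is at most $(\epsilon B_{k,2}\,|T^g_f|+\epsilon B_{k,1}\,|\partial_t T^g_f|)/|T^g_f|^2$, while the second bracket is at most $\pi\,|\phi_k'''-\theta_f^{(g)}|\,|q^1_f|$ by Lemma \ref{Key_lemma}. Dividing by $2\pi|q^0_f|>2\pi\tilde\epsilon$, using $|T^g_f|>\tilde\epsilon$, $|q^0_f|>\tilde\epsilon$ and the uniform boundedness of $B_{k,n}$, $|q^1_f|$ and of the relevant CTs and their $t$-derivatives, each piece is $\le\tilde\epsilon$ once $\epsilon$ is small enough (equivalently, one imposes a final smallness restriction on $\epsilon$ of the type of the ``$[\cdots]^{-1}\ge\tilde\epsilon^3$'' condition in the proof of Lemma \ref{Key_lemma}), which gives $|\mu_f^{(g)}-\phi_k''|\le\tilde\epsilon$.

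The hard part is essentially bookkeeping: writing out and verifying the exact algebraic collapse ``first bracket $\times(T^g_f)^2=\mathcal{E}_2 T^g_f-\partial_t T^g_f\,\mathcal{E}_1$'' --- the $\partial^2$-analogue of the long identity in the proof of Lemma \ref{Key_lemma} --- and tracking the multipliers. The one point that needs a little care is the $\theta$-error term: after dividing by $2\pi i q^0_f$ it reads $\tfrac12(\phi_k'''-\theta_f^{(g)})q^1_f/q^0_f$, and bounding it with the bound $|\phi_k'''-\theta_f^{(g)}|\le\tilde\epsilon$ \emph{as stated} in Lemma \ref{Key_lemma}, together with only $|q^0_f|>\tilde\epsilon$, gives $O(1)$; one must instead use the sharper estimate that the proof of Lemma \ref{Key_lemma} actually produces (with the uniform boundedness of the bracket there and $\epsilon=\tilde\epsilon^{10}$, that proof yields $|\phi_k'''-\theta_f^{(g)}|\le C\tilde\epsilon^{4}$, which makes the division by $q^0_f$ harmless), or else strengthen the corresponding restriction on $\epsilon$.
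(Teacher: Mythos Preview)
Your proof is essentially identical to the paper's: the paper rewrites $\mu_f^{(g)}$ in the $\theta_f^{(g)}$-explicit form, multiplies through by $(T^g_f)^2q^0_f$, and obtains exactly your decomposition $\mathcal{E}_2\,T^g_f-\partial_t T^g_f\,\mathcal{E}_1$ (via Lemmas \ref{Lemma_D1}--\ref{Lemma_D2}) plus the $(\phi_k'''-\theta_f^{(g)})\,q^1_f/(2q^0_f)$ remainder, arriving at the bound $\frac{\epsilon B_{k,2}}{2\pi|T^g_fq^0_f|}+\frac{\epsilon B_{k,1}|\partial_tT^g_f|}{2\pi|T^g_f|^2|q^0_f|}+\frac{\epsilon|q^1_f|}{2|q^0_f|}$. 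Your closing remark is on point: the paper's $\epsilon$ in the third slot tacitly uses the sharper $O(\epsilon)$ estimate produced inside the proof of Lemma \ref{Key_lemma} rather than its stated $\tilde\epsilon$ conclusion, and then finishes with the same ``similar argument / $\epsilon$ sufficiently small'' restriction you propose.
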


\begin{proof}
By the previous lemmas, we have at $(t, \xi, \lambda) \in M_k$
\begin{align*}
    &\quad\left|\mu^{(g)}_f(t, \xi, \lambda)-\phi''_k(t)\right|\\
    &=\left|\frac{\partial_t \left(\frac{\partial_t T^{(g)}_f }{T^{(g)}_f }\right)-\pi i \theta^{(g)}_f(t, \xi, \lambda)\left[2\frac{T^{(tg)}_f}{T^{(g)}_f}+\partial_t\left(\frac{T^{(t^2g)}_f}{T^{(g)}_f}\right)\right]}{2\pi i\left(1+\partial_t\left(\frac{T^{(tg)}_f}{T^{(g)}_f}\right)\right)}-\phi''_k(t)\right| \\
    &=\left|\frac{T^{(g)}_f\bigg[\partial^2_{tt}T^{(g)}_f-2\pi i\bigg(\phi_{k}^{\prime \prime}(t) T_{f}^{(g)}+\phi'''_k(t)T^{(tg)}_f+\phi_{k}^{\prime}(t) \partial_{t} T_{f}^{(g)}}{2\pi i\left((T^{(g)}_f)^2+\partial_t T^{(tg)}_f T^{(g)}_f-T^{(tg)}_f\partial_t T^{(g)}_f\right)}\right.\\
    &\hspace{4.5cm}+\frac{\phi_{k}^{\prime \prime}(t) \partial_{t} T_{f}^{(t g)}+\frac{1}{2}\phi'''_k(t)\partial_t T^{(t^2g)}_f\bigg)\bigg]}{2\pi i\left((T^{(g)}_f)^2+\partial_t T^{(tg)}_f T^{(g)}_f-T^{(tg)}_f\partial_t T^{(g)}_f\right)}\\
    &\quad\left.-\frac{\partial_t T^{(g)}_f\left[\partial_{t} T_{f}^{(g)}-2 \pi i\left(\phi_{k}^{\prime}(t) T_{f}^{(g)}+\phi_{k}^{\prime \prime}(t) T_{f}^{(t g)}+\frac{\phi'''_k(t)}{2}T^{(t^2g)}_f\right)\right]}{2\pi i\left((T^{(g)}_f)^2+\partial_t T^{(tg)}_f T^{(g)}_f-T^{(tg)}_f\partial_t T^{(g)}_f\right)}\right.\\
    &\quad+\left.\frac{\pi i \left(\phi'''_k(t)-\theta^{(g)}_f\right)\left(T^{(g)}_f\right)^2\left[2\frac{T^{(tg)}_f}{T^{(g)}_f}+\partial_t\left(\frac{T^{(t^2g)}_f}{T^{(g)}_f}\right)\right]}{2\pi i\left((T^{(g)}_f)^2+\partial_t T^{(tg)}_f T^{(g)}_f-T^{(tg)}_f\partial_t T^{(g)}_f\right)}\right|\\
    &\leq\frac{\epsilon}{2\pi |T^g_fq^0_f|}B_{k,2}+\frac{\epsilon}{2\pi (T^g_f)^2|q^0_f|}B_{k,1}\left|\partial_t T^{(g)}_f\right| +\frac{\epsilon}{2 |q^0_f|}\left|q^1_f\right|,
\end{align*}
Also with the boundedness of $\left|q^1_f\right|$ and the similar argument in the proof of Lemma \ref{Key_lemma}, $|\mu_f^{(g)}(t,\xi,\lambda) - \phi_k^{\prime\prime}(t)| \leq \Tilde{\epsilon}$ if $\epsilon$ is sufficient small.
\end{proof}
\begin{lemma}
For any $(t, \xi, \lambda) \in M_k$ such that $\left|T_f^{(g)}(t,\xi,\lambda)\right| > \Tilde{\epsilon}$, $ \left|q^0_f(t,\xi,\lambda)\right|>\Tilde{\epsilon}$ and $\pi\left|2+\partial_t\left(\dfrac{q^1_f}{q^0_f}\right)\right|>\Tilde{\epsilon}$ , we have
$$|\omega_f^{(g)}(t,\xi,\lambda) - \phi_k^{\prime}(t)| \leq \Tilde{\epsilon} $$
provided $\epsilon$ is sufficiently small.
\end{lemma}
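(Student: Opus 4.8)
The plan is to imitate the structure of the two preceding lemmas. Fix $(t,\xi,\lambda)\in M_k$, which by Lemma~\ref{Lemma0} forces $k$ to be unique. The first step is purely algebraic: starting from
\[
\omega_f^{(g)}(t,\xi,\lambda)=\frac{\partial_t T_f^{(g)}}{2\pi i\,T_f^{(g)}}-\mu_f^{(g)}(t,\xi,\lambda)\frac{T_f^{(tg)}}{T_f^{(g)}}-\frac12\,\theta_f^{(g)}(t,\xi,\lambda)\frac{T_f^{(t^2g)}}{T_f^{(g)}},
\]
I would multiply through by $T_f^{(g)}$ and add and subtract $\phi_k'(t)T_f^{(g)}+\phi_k''(t)T_f^{(tg)}+\tfrac12\phi_k'''(t)T_f^{(t^2g)}$ to obtain
\[
T_f^{(g)}\bigl(\omega_f^{(g)}-\phi_k'(t)\bigr)=\frac{1}{2\pi i}\Bigl[\partial_t T_f^{(g)}-2\pi i\bigl(\phi_k'(t)T_f^{(g)}+\phi_k''(t)T_f^{(tg)}+\tfrac12\phi_k'''(t)T_f^{(t^2g)}\bigr)\Bigr]+\bigl(\phi_k''(t)-\mu_f^{(g)}\bigr)T_f^{(tg)}+\tfrac12\bigl(\phi_k'''(t)-\theta_f^{(g)}\bigr)T_f^{(t^2g)}.
\]
The bracket is precisely the quantity estimated in Lemma~\ref{Lemma_D1}, hence bounded by $\epsilon B_{k,1}(t)$, while $|T_f^{(tg)}|\le\|f\|_{L^\infty}I_1$ and $|T_f^{(t^2g)}|\le\|f\|_{L^\infty}I_2$ by the crude pointwise bound on chirplet transforms used throughout the appendix.

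Dividing by $|T_f^{(g)}|>\tilde{\epsilon}$ then gives
\[
\bigl|\omega_f^{(g)}-\phi_k'(t)\bigr|\le\frac{1}{\tilde{\epsilon}}\Bigl(\frac{\epsilon B_{k,1}(t)}{2\pi}+\bigl|\phi_k''(t)-\mu_f^{(g)}\bigr|\,\|f\|_{L^\infty}I_1+\tfrac12\bigl|\phi_k'''(t)-\theta_f^{(g)}\bigr|\,\|f\|_{L^\infty}I_2\Bigr).
\]
The delicate point is that one must not insert the black-box bounds $|\phi_k''(t)-\mu_f^{(g)}|\le\tilde{\epsilon}$, $|\phi_k'''(t)-\theta_f^{(g)}|\le\tilde{\epsilon}$ here, since the prefactor $\tilde{\epsilon}^{-1}$ would then leave an $O(1)$ remainder. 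Instead I would carry along the finer estimates that the proofs of Lemma~\ref{Key_lemma} and of the lemma immediately preceding this one in fact produce: there each of the two differences is dominated by $\epsilon$ times a combination of the $B_{k,n}$'s, the $I_n$'s and $\|f\|_{L^\infty}$ that is uniformly bounded by a constant depending only on $f$ and $g$, divided by a fixed power of $\tilde{\epsilon}$; using $\tilde{\epsilon}=\epsilon^{1/10}$ this upgrades them to $|\phi_k''(t)-\mu_f^{(g)}|=O(\tilde{\epsilon}^{3})$ and $|\phi_k'''(t)-\theta_f^{(g)}|=O(\tilde{\epsilon}^{4})$ once $\epsilon$ is small (in fact any power strictly larger than $2$ would do). Since also $\epsilon B_{k,1}(t)/(2\pi)=O(\tilde{\epsilon}^{10})$, the parenthesis is $O(\tilde{\epsilon}^{3})$ and the whole right-hand side is $O(\tilde{\epsilon}^{2})$; exactly as at the end of the proof of Lemma~\ref{Key_lemma}, imposing one more smallness restriction on $\epsilon$ (equivalently, requiring the relevant bounded combination of $B_{k,n}$, $\|f\|_{L^\infty}$ and $I_n$ to be at most $\tilde{\epsilon}^{-1}$, which holds automatically for small $\epsilon$) yields $|\omega_f^{(g)}(t,\xi,\lambda)-\phi_k'(t)|\le\tilde{\epsilon}$.

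The only real obstacle I anticipate is the bookkeeping forced by the factor $1/|T_f^{(g)}|>1/\tilde{\epsilon}$: it amplifies every error by $\tilde{\epsilon}^{-1}$, so the estimates borrowed from Lemma~\ref{Key_lemma} and the preceding lemma must be used in their sharp $O(\tilde{\epsilon}^{p})$ form with $p\ge2$ rather than as the advertised $\le\tilde{\epsilon}$, and one has to check that the chain $\theta_f^{(g)}\Rightarrow\mu_f^{(g)}\Rightarrow\omega_f^{(g)}$ closes with a power of $\tilde{\epsilon}$ to spare at every link. Everything else is the triangle inequality applied to the displayed identity together with the uniform boundedness of $T_f^{(t^ng)}$, $\partial_t T_f^{(t^ng)}$, $\partial_{tt}^2 T_f^{(t^ng)}$, $q^0_f$ and $q^1_f$ already established in the earlier lemmas; in this sense the argument is just a repetition of the proof of the previous lemma with $\phi_k'$ and Lemma~\ref{Lemma_D1} in place of $\phi_k''$ and Lemma~\ref{Lemma_D2}.
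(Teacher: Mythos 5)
Your proposal is correct and follows essentially the same route as the paper: the identical decomposition of $T_f^{(g)}\bigl(\omega_f^{(g)}-\phi_k'(t)\bigr)$ into the Lemma~\ref{Lemma_D1} bracket plus $(\phi_k''-\mu_f^{(g)})T_f^{(tg)}+\tfrac12(\phi_k'''-\theta_f^{(g)})T_f^{(t^2g)}$, followed by division by $|T_f^{(g)}|>\tilde{\epsilon}$ and the same final smallness restriction as in Lemma~\ref{Key_lemma}. If anything, your power-counting remark (that one must use the finer $\epsilon$-times-bounded-quantity estimates for $\mu_f^{(g)}$ and $\theta_f^{(g)}$ rather than the advertised $\le\tilde{\epsilon}$ bounds, so the $\tilde{\epsilon}^{-1}$ amplification still closes) makes explicit a point the paper passes over with ``a similar argument as in the proof of Lemma~\ref{Key_lemma}.''
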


\begin{proof}
By the previous lemmas, we have at $(t, \xi, \lambda) \in M_k$
\begin{align*}
&\quad|\omega^{(g)}_f(t, \xi, \lambda)-\phi^\prime_k(t)|\\
&=\left|\frac{1}{2\pi i}\left(\frac{\partial_t T^{(g)}_f }{T^{(g)}_f }\right)-\mu^{(g)}_f(t, \xi, \lambda)\left(\frac{T^{(tg)}_f}{T^{(g)}_f}\right)-\frac{1}{2}\theta^{(g)}_f(t, \xi, \lambda)\left(\frac{T^{(t^2g)}_f}{T^{(g)}_f}\right) - \phi^\prime_k(t)\right|\\
&=\left|\frac{\partial_t T_f^{(g)} - 2\pi i\mu^{(g)}_f(t, \xi, \lambda)T_f^{(tg)} - \pi i\theta^{(g)}_f(t, \xi, \lambda)T_f^{(t^2g)} - 2\pi i\phi^\prime_k(t)T_f^{(g)}}{2\pi i T_f^{(g)}}\right|\\
&=\left|\frac{\bigg[\partial_{t} T_{f}^{(g)}-2 \pi i\left(\phi_{k}^{\prime}(t) T_{f}^{(g)}+\phi_{k}^{\prime \prime}(t) T_{f}^{(t g)}+\frac{\phi'''_k(t)}{2}T^{(t^2g)}_f\right)\bigg]}{2\pi i T_f^{(g)}}\right.\\
&\quad\left.+\frac{(\phi^{\prime\prime}_k(t)- \mu_f^{(g)}(t,\xi,\lambda))T_f^{(tg)}}{T_f^{(g)}}
+\frac{(\phi^{\prime\prime\prime}_k(t)- \theta_f^{(g)}(t,\xi,\lambda))T_f^{(t^2g)}}{2T_f^{(g)}}\right|\\
&\leq \frac{\epsilon}{2\pi|T_f^{(g)}|}B_{k,1}+\frac{\epsilon}{|T_f^{(g)}|}\left|T_{f}^{(t g)}\right|+\frac{\epsilon}{2|T_f^{(g)}|}\left|T_f^{(t^2g)}\right|\,.
\end{align*}
With the similar argument as in the proof of Lemma \ref{Key_lemma}, we have $|\omega_f^{(g)}(t,\xi,\lambda) - \phi_k^{\prime}(t)| \leq \Tilde{\epsilon}$, if $\epsilon$ is sufficiently small.
\end{proof}
Combining all these lemmas, one can conclude the proof of Theorem \ref{Main_theorem}.

 \bibliographystyle{elsarticle-num} 
 \bibliography{main}

\begin{thebibliography}{10}
\expandafter\ifx\csname url\endcsname\relax
  \def\url#1{\texttt{#1}}\fi
\expandafter\ifx\csname urlprefix\endcsname\relax\def\urlprefix{URL }\fi
\expandafter\ifx\csname href\endcsname\relax
  \def\href#1#2{#2} \def\path#1{#1}\fi

\bibitem{auger1995improving}
F.~Auger, P.~Flandrin, Improving the readability of time-frequency and time-scale representations by the reassignment method, IEEE Transactions on signal processing 43~(5) (1995) 1068--1089.

\bibitem{fitz2009unified}
K.~R. Fitz, S.~A. Fulop, A unified theory of time-frequency reassignment, arXiv preprint arXiv:0903.3080 (2009).

\bibitem{rilling2003empirical}
G.~Rilling, P.~Flandrin, P.~Goncalves, et~al., On empirical mode decomposition and its algorithms, in: IEEE-EURASIP workshop on nonlinear signal and image processing, Vol.~3, Grado: IEEE, 2003, pp. 8--11.

\bibitem{7080206}
P.~Flandrin, P.~Gonçalvès, G.~Rilling, Detrending and denoising with empirical mode decompositions, in: 2004 12th European Signal Processing Conference, 2004, pp. 1581--1584.

\bibitem{cohen1995time}
L.~Cohen, Time-frequency analysis, Vol. 778, Prentice hall New Jersey, 1995.

\bibitem{9970392}
S.~Meignen, N.~Laurent, T.~Oberlin, One or two ridges? an exact mode separation condition for the gabor transform, IEEE Signal Processing Letters 29 (2022) 2507--2511.
\newblock \href {https://doi.org/10.1109/LSP.2022.3226948} {\path{doi:10.1109/LSP.2022.3226948}}.

\bibitem{nelson2002instantaneous}
D.~J. Nelson, Instantaneous higher order phase derivatives, Digital Signal Processing 12~(2-3) (2002) 416--428.

\bibitem{meignen2012new}
S.~Meignen, T.~Oberlin, S.~McLaughlin, A new algorithm for multicomponent signals analysis based on synchrosqueezing: With an application to signal sampling and denoising, IEEE transactions on Signal Processing 60~(11) (2012) 5787--5798.

\bibitem{oberlin2015second}
T.~Oberlin, S.~Meignen, V.~Perrier, Second-order synchrosqueezing transform or invertible reassignment? towards ideal time-frequency representations, IEEE Transactions on Signal Processing 63~(5) (2015) 1335--1344.

\bibitem{pham2017high}
D.-H. Pham, S.~Meignen, High-order synchrosqueezing transform for multicomponent signals analysis—with an application to gravitational-wave signal, IEEE Transactions on Signal Processing 65~(12) (2017) 3168--3178.

\bibitem{behera2018theoretical}
R.~Behera, S.~Meignen, T.~Oberlin, Theoretical analysis of the second-order synchrosqueezing transform, Applied and Computational Harmonic Analysis 45~(2) (2018) 379--404.

\bibitem{mann1992time}
S.~Mann, S.~Haykin, Time-frequency perspectives: The chirplet transform, IEEE ICASSP-92, San Francisco (1992).

\bibitem{mann1995chirplet}
S.~Mann, S.~Haykin, The chirplet transform: Physical considerations, IEEE Transactions on Signal Processing 43~(11) (1995) 2745--2761.

\bibitem{zhu2020frequency}
X.~Zhu, H.~Yang, Z.~Zhang, J.~Gao, N.~Liu, Frequency-chirprate reassignment, Digital Signal Processing 104 (2020) 102783.

\bibitem{boashash1994polynomial}
B.~Boashash, P.~O'Shea, Polynomial wigner-ville distributions and their relationship to time-varying higher order spectra, IEEE Transactions on Signal Processing 42~(1) (1994) 216--220.

\bibitem{wu2020current}
H.-T. Wu, Current state of nonlinear-type time--frequency analysis and applications to high-frequency biomedical signals, Current Opinion in Systems Biology 23 (2020) 8--21.

\bibitem{li2020if}
P.~Li, Q.-H. Zhang, If estimation of overlapped multicomponent signals based on viterbi algorithm, Circuits, Systems, and Signal Processing 39 (2020) 3105--3124.

\bibitem{oberlin2017second}
T.~Oberlin, S.~Meignen, The second-order wavelet synchrosqueezing transform, in: 2017 IEEE International Conference on Acoustics, Speech and Signal Processing (ICASSP), IEEE, 2017, pp. 3994--3998.

\bibitem{chen2023disentangling}
Z.~Chen, H.-T. Wu, Disentangling modes with crossover instantaneous frequencies by synchrosqueezed chirplet transforms, from theory to application, Applied and Computational Harmonic Analysis 62 (2023) 84--122.

\bibitem{daubechies2011synchrosqueezed}
I.~Daubechies, J.~Lu, H.-T. Wu, Synchrosqueezed wavelet transforms: An empirical mode decomposition-like tool, Applied and computational harmonic analysis 30~(2) (2011) 243--261.

\bibitem{stankovic2001measure}
L.~Stankovi{\'c}, A measure of some time--frequency distributions concentration, Signal Processing 81~(3) (2001) 621--631.

\bibitem{rubner2000}
Y.~Rubner, C.~Tomasi, L.~J. Guibas, The earth mover's distance as a metric for image retrieval, International Journal of Computer Vision 40~(2) (2000) 99--121.

\bibitem{daubechies2016}
I.~Daubechies, Y.~Wang, H.-T. Wu, Conceft: Concentration of frequency and time via a multitapered synchrosqueezed transform, Phil. Trans. R. Soc. A. 374~(2065) (2016) 20150193.

\bibitem{pele2008}
O.~Pele, M.~Werman, A linear time histogram metric for improved sift matching, 2008, pp. 495--508.

\bibitem{pele2009}
O.~Pele, M.~Werman, Fast and robust earth mover's distances, in: 2009 IEEE 12th International Conference on Computer Vision, 2009, pp. 460--467.

\end{thebibliography}





\end{document}